\documentclass[11pt]{amsart}
\pdfoutput=1

\usepackage[usenames,dvipsnames]{xcolor}   %

\usepackage[utf8]{inputenc}
\usepackage[T1]{fontenc}

\usepackage[letterpaper,margin=1in]{geometry}

\usepackage{lmodern}
\usepackage{microtype}
\usepackage{etoolbox}                 %
\usepackage{lettrine}

\usepackage{mathtools}
\usepackage{amsmath,amssymb,amsfonts,amsxtra,bm}
\usepackage{amsthm}
\usepackage{mathrsfs}
\usepackage{latexsym}
\usepackage{nicefrac}       %

\usepackage{booktabs}
\usepackage{makecell,multirow}       %
\usepackage{colortbl}
\usepackage{graphicx}
\usepackage{epstopdf}
\usepackage{float}
\floatstyle{plaintop}
\restylefloat{table}
\usepackage{footnote, tablefootnote}

\usepackage{enumitem}
\usepackage{algorithm}
\usepackage{algorithmic}
\usepackage{comment}
\usepackage{xspace}
\usepackage{url}
\usepackage[most]{tcolorbox}

\usepackage[round]{natbib}

\usepackage{hyperref}

\numberwithin{equation}{section}

\makeatletter

\definecolor{darkblue}{rgb}{0.0,0.0,0.65}
\definecolor{darkred}{rgb}{0.68,0.05,0.0}
\definecolor{darkgreen}{rgb}{0.0,0.29,0.29}
\definecolor{darkpurple}{rgb}{0.47,0.09,0.29}

\patchcmd{\section}{\scshape}{\fontfamily{ppl}\bfseries\scshape\selectfont}{}{%
  \PackageWarning{aiproofs}{Could not bold the section font; amsart may have changed}}

\newcommand{\aiauthor}{%
  \author[Suvrit Sra]{Suvrit Sra\textsuperscript{\normalfont*}}%
  \thanks{*\,TU Munich, Dept.~of Mathematics, School of CIT; Garching, Germany.
    Email: \texttt{s.sra@tum.de}.}}

\makeatother

\hypersetup{colorlinks=true,linkcolor=blue!45!black,citecolor=blue!45!black,urlcolor=red!65!black}
\setlist{nosep,leftmargin=*}

\newenvironment{aiusage}{\section*{AI usage}}{}

\newcommand{\aiacknowledgements}[1][]{%
  \section*{Acknowledgements}
  \if\relax\detokenize{#1}\relax\else #1\ \fi
  The author (SS) thanks the Alexander von Humboldt (AvH) Foundation for
  their generous support via a Humboldt Professorship in AI.%
}

\newcommand{\reals}{\mathbb{R}}

\newcommand{\pp}{\mathbb{P}}

\newcommand{\Fc}{\mathcal{F}}

\newcommand{\half}{\tfrac{1}{2}}

\newcommand{\ip}[2]{\langle {#1},\, {#2} \rangle}

\newcommand{\set}[1]{\left\lbrace#1\right\rbrace}

\newcommand{\norm}[1]{\|{#1}\|}

\newcommand{\pnorm}[2]{\|#1\|_{#2}}

\newcommand{\frob}[1]{\|#1\|_{\text{F}}}

\newcommand{\what}{\widehat}

\DeclareMathOperator{\trace}{tr}
\DeclareMathOperator{\diag}{diag}

\DeclareMathOperator{\real}{Re}

\newtheorem{theorem}{Theorem}[section]

\newtheorem{prop}[theorem]{Proposition}
\newtheorem{lemma}[theorem]{Lemma}
\newtheorem{corr}[theorem]{Corollary}
\newtheorem{rmk}[theorem]{Remark}
\newtheorem{example}[theorem]{Example}

\theoremstyle{definition}

\definecolor{cdarkred}{RGB}{180,70,30} %
\definecolor{cdarkgreen}{RGB}{0,130,0} %
\definecolor{azure}{RGB}{240,255,255}
\definecolor{cmidgreen}{RGB}{201,219,216}
\definecolor{clightgreen}{RGB}{200,255,200}
\definecolor{clightgray}{RGB}{240,240,240}
\definecolor{cmygreen}{RGB}{10,120,0}
\definecolor{cmyblue}{RGB}{50,50,255}
\definecolor{sblue}{RGB}{100,100,255}
\definecolor{cmypurple}{RGB}{150,50,180}
\definecolor{cmyred}{RGB}{238, 64,  0}
\definecolor{cdarkblue}{RGB}{30,75,170}
\definecolor{csharpred}{RGB}{255,0,0}

\newtcolorbox{dashedbox}[1][]{
  nobeforeafter,
  colback=yellow!10!white,
  boxrule=0pt,
  enhanced jigsaw,
  top=2pt,
  left=2pt,
  right=2pt,
  bottom=0pt,
  borderline horizontal={0.5pt}{0pt}{dashed},
  borderline vertical={0.5pt}{0pt}{dashed},
  #1
}

\newtcbox{\tcbmathbox}[1][]{
  nobeforeafter,
  colback=yellow!30!white,
  boxrule=0pt,
  enhanced jigsaw,
  top=0pt,
  left=1pt,
  right=1pt,
  bottom=0pt,
  borderline horizontal={0.5pt}{0pt}{dashed},
  borderline vertical={0.5pt}{0pt}{dashed},
  #1
}

\newcommand{\parsum}{\mathbin{:}}      %
\newcommand{\sphere}{\mathbb{S}_{p,+}} %
\newcommand{\simplex}{\Delta_m}        %
\newcommand{\Bc}{\mathcal{B}}
\newcommand{\Kc}{\mathcal{K}}
\newcommand{\Lc}{\mathcal{L}}
\newcommand{\Rc}{\mathcal{R}}
\newcommand{\Tc}{\mathcal{T}}
\DeclareMathOperator{\rank}{rank}
\DeclareMathOperator{\ran}{ran}
\DeclareRobustCommand{\pbsubtitle}{{\fontfamily{ppl}\selectfont
  (with application to $p$-Bures--Wasserstein barycenters)}}

\usepackage{pgfplots}
\usepackage{placeins}
\pgfplotsset{compat=1.18}

\allowdisplaybreaks

\keywords{Bures--Wasserstein distance, Schatten norm, positive semidefinite block matrices, Lin--van den Driessche conjecture, partition inequality, complex interpolation, barycenter, Fr\'echet mean}

\hypersetup{
  pdftitle={The p-Bures--Wasserstein Distance via 3x3 Block Matrices, with
            Application to p-Bures--Wasserstein Barycenters},
  pdfauthor={Suvrit Sra},
  pdfsubject={A metric interpolating between the Bures--Wasserstein and
              Hilbert--Schmidt distances, and its Frechet means}
}

\begin{document}
\title[The $p$-Bures--Wasserstein distance]{The $p$-Bures--Wasserstein distance via $3\times3$ block matrices\\[0.5ex]
\pbsubtitle}
\aiauthor
\begin{abstract}
  We study the \(p\)-Bures--Wasserstein distance
  \(D_p(A,B)=(\trace A^p+\trace B^p-2\pnorm{A^{1/2}B^{1/2}}{p}^p)^{\frac12}\) on positive semidefinite
  matrices; it equals the Bures--Wasserstein distance for \(p=1\), and the Hilbert--Schmidt distance for
  \(p=2\).  We prove that \(D_p\) is a metric for every \(1\le p\le2\), by proving a stronger 2014
  conjecture of Lin and van den Driessche on \(3\times3\) block matrices.  Our main tool is a partition
  inequality for Schatten norms, proved by complex interpolation.  We also characterize Fr\'echet means
  for $D_p$ by a fixed-point equation and prove geometric convergence of the associated iteration.
\end{abstract}
\maketitle

\vspace*{-10pt}
\section{Introduction}\label{sec:intro}
Let \(\pp_n\) denote the cone of \(n\times n\) complex positive semidefinite
matrices, and write \(\pnorm{\cdot}{p}\) for the Schatten \(p\)-norm.  Fix $1\le p\le2$ and write $t:=\frac p2\in\left[\frac12,1\right]$. For \(A,B\in\pp_n\), define the \emph{affinity}
\begin{equation}\label{eq:affinity-definition}
  \Phi_p(A,B):=\trace\!\bigl(A^{1/2}BA^{1/2}\bigr)^t
  =\pnorm{A^{1/2}B^{1/2}}{p}^p.
\end{equation}
We study the \emph{\(p\)-Bures--Wasserstein distance}
\begin{equation}\label{eq:distance-definition}
  D_p(A,B):=\left(\trace A^p+\trace B^p-2\Phi_p(A,B)\right)^{1/2}.
\end{equation}
At \(p=1\), this is the Bures--Wasserstein distance
\citep{bures1969extension,olkin1982distance,bhatia2019bures}, while at \(p=2\)
it is the Hilbert--Schmidt distance.  The main result interpolates the triangle
inequality between these two endpoints.

The \(p\)th root of \(\Phi_p\), \(\pnorm{A^{1/2}B^{1/2}}{p}\), is the \(p\)-norm fidelity of
\citet{wei2024antilinear-pbures}, who record its elementary properties.
At \(p=1\) it is the root fidelity of quantum information.  Other
generalizations include the alpha Procrustes distances of
\citet{minh2022alpha}, which are based on matrix powers and approach the
Log-Euclidean distance, and the fidelities of
\citet{afham2025riemannian}, built from Bures--Wasserstein Riemannian geometry.
The family \(D_p\) varies the Schatten exponent.  Its triangle inequality is
implicit in a conjecture of \citet{lin2014positive} on positive
semidefinite \(3\times3\) block matrices, which we call the \emph{LvD
conjecture} (Section~\ref{sec:lin-block-matrix}).

The same family reappeared later in work on matrix divergences, without
reference to the LvD conjecture: for \(1\le p<2\), \(D_p(A,B)^2\) is twice the
\(\alpha\)-\(z\) Bures--Wasserstein divergence of \citet{dinh2021alphaz} at
\(\alpha=1/2\) and \(z=p/2\), evaluated at \((A^p,B^p)\), and
\citet[Problem~2]{komalovics2024parametric} asked whether the resulting
distance is a metric for \(1<p<2\).

\begin{theorem}[$p$-Bures--Wasserstein metric]\label{thm:main}
For every \(1\le p\le2\) and all \(A,B,C\in\pp_n\),
\begin{equation}\label{eq:main-triangle}
  D_p(A,C)\le D_p(A,B)+D_p(B,C).
\end{equation}
\end{theorem}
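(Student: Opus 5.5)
We plan to deduce the triangle inequality from a positive-semidefiniteness statement for a \(3\times3\) Gram-type matrix, which is the LvD conjecture. The guiding observation is that \(D_p\) behaves like a Euclidean distance in a suitable embedding. For \(p=1\) one writes \(\Phi_1(A,B)=\max_U \real\trace(A^{1/2}UB^{1/2})\), and \(D_1\) is the minimal Frobenius distance between \(A^{1/2}\) and unitary rotations of \(B^{1/2}\). That variational form fails for \(p>1\). We instead use the fact that the triangle inequality for \(d(x,y)=(a_x+a_y-2k(x,y))^{1/2}\) follows if, for every triple, the \(3\times3\) matrix
\[
G=\begin{pmatrix} \trace A^p & \Phi_p(A,B) & \Phi_p(A,C)\\ \Phi_p(A,B) & \trace B^p & \Phi_p(B,C)\\ \Phi_p(A,C) & \Phi_p(B,C) & \trace C^p\end{pmatrix}
\]
is positive semidefinite. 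In that case \(G\) is a Gram matrix of vectors \(x_A,x_B,x_C\in\reals^3\), and \(D_p(X,Y)=\norm{x_X-x_Y}\), so the triangle inequality holds in \(\reals^3\).

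The key steps, in order, are as follows.

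\textbf{Step 1 (reduction).} Record the Gram argument above. The triangle inequality only needs PSD-ness of \(G\) restricted to the relevant \(2\)-dimensional compressions, but full PSD-ness of \(G\) is the natural target.

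\textbf{Step 2 (block form).} Write \(A=X_1^*X_1\), \(B=X_2^*X_2\), \(C=X_3^*X_3\); for instance \(X_i\) can be a square root. Form the PSD block matrix \(M=[X_i X_j^*]_{i,j}\), or the dual form \([X_i^*X_j]\). Then \(\Phi_p(A,B)=\pnorm{A^{1/2}B^{1/2}}{p}^p=\pnorm{M_{12}}{p}^p\) and \(\trace A^p=\pnorm{M_{11}}{p}^p\). So \(G\) is exactly the matrix \([\pnorm{M_{ij}}{p}^p]\), and the LvD conjecture asserts that this matrix is PSD for every PSD \(3\times3\) block matrix \(M\) and every \(1\le p\le2\).

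\textbf{Step 3 (partition inequality by complex interpolation).} To show that \(\sum_{i,j}z_i\bar z_j\pnorm{M_{ij}}{p}^p\ge0\), the plan is to establish a partition-type inequality. It would have the shape \(\sum_{i\ne j}\pnorm{M_{ij}}{p}^p\le\) a combination of diagonal terms. The endpoints are easy:

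\begin{itemize}
\item At \(p=2\), \(\pnorm{M_{ij}}{2}^2=\trace M_{ij}^*M_{ij}\), and the matrix \([\trace(M_{ij}^*M_{ij})]\) is PSD because it is a Schur/Gram construction from \(M^{1/2}\).
\item At \(p=1\), the statement reduces to the Bures case: choose unitary polar factors \(U_{ij}\) and use the PSD-ness of \(M\) tested against block vectors built from the \(U_{ij}\).
\end{itemize}

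For intermediate \(p\), we consider an analytic family \(F(z)\) in the strip \(0\le\real z\le1\), built from powers \(M^{z}\)-type factorizations and polar parts of the off-diagonal blocks. We bound it on the two boundary lines using the \(p=1\) and \(p=2\) statements, and conclude by the Hadamard three-lines theorem. Signs can be handled by first reducing to real \(z\), then absorbing signs into the \(X_i\). It may also help to reduce the \(3\times3\) case to the worst case in which the off-diagonal entries of \(G\) saturate a \(2\times2\) condition.

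The main obstacle is Step 3. The quantity \(\pnorm{M_{ij}}{p}^p\) is not multilinear, so interpolation must be applied to a cleverly linearized expression. One natural choice is \(\real\trace(M_{ij}W_{ij})\), with \(\pnorm{W_{ij}}{p'}\) controlled by duality and \(W_{ij}=|M_{ij}|^{p-1}U_{ij}^*\). The hard part is making this duality choice compatible with a single analytic family whose boundary values reduce to the two PSD endpoint facts. If a direct approach stalls, I would first try the scalar-weighted version \(\sum_{i<j}2\pnorm{M_{ij}}{p}^p\le\sum_i\pnorm{M_{ii}}{p}^p\) with diagonal rescalings \(M\mapsto DMD\) to generate all the needed quadratic forms.
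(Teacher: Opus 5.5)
Your Steps~1--2 are correct. If \(G\succeq0\), then \(D_p\) is a Euclidean distance on the triple. Writing \(A_1,A_2,A_3\) for \(A,B,C\), you have \(G=[\pnorm{M_{ij}}{p}^p]\) for the positive semidefinite block matrix \(M=[A_i^{1/2}A_j^{1/2}]\). So the LvD statement implies the theorem, and the paper notes the same implication. But this reduces the theorem to a \emph{stronger} statement, and Step~3, which carries all the content, is not carried out; as sketched it cannot work.

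For \(A,B,C\in\sphere\), put \(\alpha=\vartheta_p(A,B)\) and \(\beta=\vartheta_p(A,C)\). By the three-point cosine criterion, \(G\succeq0\) is equivalent to the angular triangle inequalities. These are \emph{lower} bounds on one affinity in terms of the other two, such as \(\Phi_p(B,C)\ge\cos(\alpha+\beta)\); the \(2\times2\) conditions are just H\"older. The three-lines theorem yields only \emph{upper} bounds on an analytic family, and positivity of \(G\) at \(p=1\) and \(p=2\) is not a norm bound that can be interpolated. Nothing in your plan converts an interpolated upper bound into the needed lower bound. The norming functionals \(W_{ij}=|M_{ij}|^{p-1}U_{ij}^*\) reproduce each entry separately, so they cannot relate \(M_{23}\) to \(M_{12}\) and \(M_{13}\).

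Your fallback is false: when \(A=B=C\), \(2\sum_{i<j}\pnorm{M_{ij}}{p}^p\le\sum_i\pnorm{M_{ii}}{p}^p\) reads \(6\trace A^p\le3\trace A^p\). Moreover, \(M\mapsto DMD\) multiplies entries by \(|d_id_j|^p\ge0\), and sign changes in the \(X_i\) leave every norm unchanged. So neither can generate the sign-indefinite forms \(z^{\mathsf T}Gz\). Your \(p=1\) sketch also needs the pairwise polar factors to come from a single family of unitaries, which fails in general; Drury's theorem is not that direct.

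Two ideas are missing.

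\textbf{First:} to extract a lower bound from interpolation, interpolate a map that has a contractive \emph{left inverse}. The paper interpolates the row map \(\Rc_z((X_j))=\sum_jH^{-z}H_j^zX_j\), with \(H=\sum_jH_j\), between two endpoints:
\begin{itemize}
\item at \(\real z=0\), as a map \(\ell_1(S_1)\to S_1\) (the imaginary powers are unitary);
\item at \(\real z=\tfrac12\), as a map \(\ell_2(S_2)\to S_2\) (a row co-isometry).
\end{itemize}
Hence \(\Rc_{1/p'}\) is a contraction \(\ell_p(S_p)\to S_p\). The identity \(\Rc_{1/p'}\Lc_p=\mathrm{id}\), for the column map \(\Lc_p(X)=(H_j^{1/p}H^{-1/p}X)_j\), then gives the partition inequality \(\sum_j\pnorm{H_j^{1/p}H^{-1/p}X}{p}^p\ge\pnorm{X}{p}^p\).

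\textbf{Second:} you need a device that makes this inequality evaluate affinities \emph{exactly}. Apply it at a maximizer \(A_*\) of \(a\Phi_p(X,B)+b\Phi_p(X,C)\) over \(\sphere\); \(A_*\) is positive definite by a kernel-perturbation argument. Put \(Y=A_*^{1/2}BA_*^{1/2}\) and \(Z=A_*^{1/2}CA_*^{1/2}\). The Lagrange equation \(aY^t+bZ^t=\lambda A_*^p\), together with \(t/p=\tfrac12\), turns the cross and self terms into \(ab\Phi_p(B,C)/\lambda\), \(a^2/\lambda\) and \(b^2/\lambda\). This gives \(\lambda^2\le a^2+b^2+2ab\Phi_p(B,C)\). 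Choosing \(a=\sin\beta\) and \(b=\sin\alpha\) gives \(\sin(\alpha+\beta)\le\lambda\), and hence \(\vartheta_p(B,C)\le\alpha+\beta\). From there, either your Gram reduction (via the cosine criterion) or the paper's planar comparison on the cone finishes the proof.
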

To control a weighted sum of affinities, we use the following inequality for
Schatten norms.
\begin{theorem}[Partition inequality]\label{thm:partition-intro}\label{thm:partition}
Let \(1\le p\le2\), let \(H_1,\dots,H_m\succeq0\), and suppose $H:=\sum_{j=1}^mH_j\succ0$.
Then, for every matrix \(X\),
\begin{equation}\label{eq:partition-intro}
  \sum_{j=1}^m\pnorm{H_j^{1/p}H^{-1/p}X}{p}^p\ge\pnorm{X}{p}^p.
\end{equation}
\end{theorem}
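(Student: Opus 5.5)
The plan is to \emph{factor the identity} through the family of maps appearing in \eqref{eq:partition-intro}, and then control the factoring map by complex interpolation between \(p=1\) and \(p=2\).

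Write \(\theta:=1/p\in[\tfrac12,1]\). Define the map
\[
T_\theta X:=\bigl(H_j^{\theta}H^{-\theta}X\bigr)_{j=1}^m ,
\]
and the ``recombination'' map
\[
\Psi_\theta\bigl((Y_j)_j\bigr):=\sum_{j=1}^m H^{-(1-\theta)}H_j^{1-\theta}Y_j .
\]
Because \(H_j^{1-\theta}H_j^{\theta}=H_j\) and \(\sum_j H_j=H\), we get
\[
\Psi_\theta(T_\theta X)=H^{-(1-\theta)}\,H\,H^{-\theta}X=X .
\]
(When \(\theta=1\), \(H_j^{0}\) means the support projection of \(H_j\); the identity still holds because \(H_j^{0}H_j=H_j\).) Consequently
\[
\pnorm{X}{p}\le\norm{\Psi_\theta}_{\ell^p(S_p)\to S_p}\Bigl(\sum_j\pnorm{H_j^{1/p}H^{-1/p}X}{p}^p\Bigr)^{1/p}.
\]
It therefore suffices to show \(\norm{\Psi_\theta}_{\ell^p(S_p)\to S_p}\le1\) for all \(\theta\in[\tfrac12,1]\), where \(\ell^p(S_p)\) carries the norm \((\sum_j\pnorm{Y_j}{p}^p)^{1/p}\).

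\textbf{Endpoints.} At \(\theta=1\), \(\Psi_1\) is essentially plain summation. Explicitly, \(\Psi_1(Y)=\sum_j H_j^0Y_j\), and the triangle inequality in \(S_1\) together with the contractivity of the projections \(H_j^0\) gives \(\norm{\Psi_1}_{\ell^1(S_1)\to S_1}\le1\).

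At \(\theta=\tfrac12\), \(\Psi_{1/2}\) is left multiplication by the block row \(R=[H^{-1/2}H_1^{1/2},\dots,H^{-1/2}H_m^{1/2}]\) acting on the column \((Y_j)\). Its \(\ell^2(S_2)\to S_2\) norm is at most the operator norm of \(R\). That norm satisfies
\[
\norm{R}^2=\norm{RR^*}=\norm{H^{-1/2}\textstyle\sum_jH_jH^{-1/2}}=1 .
\]

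\textbf{Interpolation.} Embed \(\Psi_\theta\) in the analytic family
\[
\Psi_z(Y)=\sum_j H^{-(1-z)}H_j^{1-z}Y_j,\qquad \tfrac12\le\real z\le1 .
\]
We are in finite dimensions and \(H\succ0\). On the support of each \(H_j\), the powers \(H_j^{1-z}\) are entire in \(z\). Hence \(z\mapsto\Psi_z\) is analytic and bounded on the strip, so Stein's interpolation theorem applies. We interpolate between the pairs \((\ell^2(S_2),S_2)\) and \((\ell^1(S_1),S_1)\); the complex interpolation space at \(1/p=\theta\) is \((\ell^p(S_p),S_p)\), with \(\theta\) running from \(1/2\) to \(1\).

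On the boundary lines, write \(z=\tfrac12+is\) or \(z=1+is\). The extra factors \(H^{is}\) and \(H_j^{-is}\) are, respectively, a unitary and a partial isometry that commutes with \(H_j\).
\begin{itemize}
\item On \(\real z=1\) the map is \(Y\mapsto\sum_j H^{is}H_j^{-is}Y_j\). Each term is a contraction, so the \(\ell^1(S_1)\to S_1\) bound \(\le1\) persists.
\item On \(\real z=\tfrac12\) the row becomes \(H^{is}\,[H^{-1/2}H_j^{1/2}H_j^{-is}]_j\). Since \(H_j^{-is}\) is contractive and commutes with \(H_j^{1/2}\), we have \(RR^*\preceq H^{-1/2}(\sum_jH_j)H^{-1/2}=I\), so the norm is again \(\le1\).
\end{itemize}
Stein interpolation then yields \(\norm{\Psi_\theta}\le1\), which proves \eqref{eq:partition-intro}.

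The main step to check carefully is this boundary-line control under imaginary powers, together with the legitimacy of the analytic family. The relevant points are the support projections of singular \(H_j\) and the admissible growth in \(\operatorname{Im}z\); the latter is trivial here since all factors are contractions on the support. A sanity check is the case \(p=2\), where the inequality is an equality:
\[
\sum_j\trace X^*H^{-1/2}H_jH^{-1/2}X=\pnorm{X}{2}^2 .
\]
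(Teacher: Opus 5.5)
Your proof is correct and is essentially the paper's. Since \(1-\theta=1/p'\), your \(\Psi_\theta\) is exactly the paper's row map \(\Rc_p\), with the same left-inverse identity; your analytic family on \(\tfrac12\le\real z\le1\) is the paper's family \(\Rc_w\) on \(0\le\real w\le\tfrac12\) under \(w=1-z\), with the same \(S_1\) triangle-inequality and \(S_2\) row-coisometry endpoint bounds. The only difference is that you handle singular \(H_j\) and the case \(p=1\) directly through support-projection powers, whereas the paper treats \(p=1\) separately and regularizes singular \(H_j\) by \(H_j+\varepsilon I\).
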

At \(p=2\), inequality \eqref{eq:partition-intro} is an
identity.  At \(p=1\) it follows from the triangle inequality. %
For intermediate \(p\), it follows by interpolating a row contraction and then
using a left-inverse relation.

\medskip\noindent\emph{\textbf{LvD conjecture.}}
We prove the triangle inequality behind Theorem~\ref{thm:main} for
angles on the normalized sphere (Theorem~\ref{thm:angular-triangle}); in
that form it resolves Conjecture 5.1 of \citet{lin2014positive}.

\begin{theorem}[LvD conjecture]\label{thm:lin-block-norm}
Let \(\mathbf H=[H_{ij}]_{i,j=1}^3\succeq0\) be a positive semidefinite
\(3\times3\) block matrix, with arbitrary finite block sizes compatible with
the block structure.  Then, for every \(1\le p\le2\),
\begin{equation}\label{eq:lin-block-norm-psd}
  \bigl[\pnorm{H_{ij}}{p}^p\bigr]_{i,j=1}^3\succeq0.
\end{equation}
\end{theorem}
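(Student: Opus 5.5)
The plan is to reduce the block-matrix statement to Theorem~\ref{thm:main}. The reduction has three steps: write the entries as affinities, turn the metric statement into an angular triangle inequality by rescaling, and finish with the standard characterization of $3\times3$ correlation matrices of cosines.

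\emph{Step 1: entries as affinities.} Factor $\mathbf H=G^*G$ with $G=[X_1\ X_2\ X_3]$, so that $H_{ij}=X_i^*X_j$. Take polar decompositions $X_i=A_i^{1/2}V_i$, where $A_i:=X_iX_i^*\in\pp_n$ and $V_i$ is a partial isometry whose initial space is $\ran X_i^*$ and whose final space is $\ran A_i$. Then $X_i^*X_j=V_i^*A_i^{1/2}A_j^{1/2}V_j$. Since $V_i^*$ is isometric on $\ran A_i\supseteq\ran A_i^{1/2}$, and $V_j$ maps onto $\ran A_j^{1/2}$, we get $\pnorm{X_i^*X_j}{p}=\pnorm{A_i^{1/2}A_j^{1/2}}{p}$. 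Hence
\[
\pnorm{H_{ij}}{p}^p=\Phi_p(A_i,A_j),\qquad \pnorm{H_{ii}}{p}^p=\trace A_i^p .
\]
The claim therefore becomes: $[\Phi_p(A_i,A_j)]_{i,j=1}^3\succeq0$ for all $A_1,A_2,A_3\in\pp_n$.

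\emph{Step 2: reduction to angles.} We may assume every $A_i\neq0$; otherwise a zero row and column appear and the claim reduces to the $2\times2$ case below. Normalize so that $\trace A_i^p=1$ and set $\cos\theta_{ij}:=\Phi_p(A_i,A_j)\in[0,1]$. The range is correct by H\"older's inequality, $\Phi_p(A,B)\le(\trace A^p)^{1/2}(\trace B^p)^{1/2}$, which is also exactly the $2\times2$ minor condition. Thus $\theta_{ij}\in[0,\pi/2]$.

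The scaling law $\Phi_p(sA,tB)=(st)^{p/2}\Phi_p(A,B)$ shows the following. The points $r_i^{2/p}A_i$ satisfy
\[
D_p\bigl(r_i^{2/p}A_i,\,r_j^{2/p}A_j\bigr)^2=r_i^2+r_j^2-2r_ir_j\cos\theta_{ij},
\]
which is the Euclidean distance between planar points at radii $r_i,r_j$ separated by angle $\theta_{ij}$. I then claim $\theta_{13}\le\theta_{12}+\theta_{23}$. Suppose instead $\theta_{12}+\theta_{23}<\theta_{13}\le\pi/2$. Place $P_1,P_3$ at radius $1$ and polar angles $0$ and $\theta_{12}+\theta_{23}<\pi$. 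Let $P_2$ be the point of the segment $[P_1,P_3]$ at polar angle $\theta_{12}$, with radius $r_2>0$. Then
\[
D_p(A_1,r_2^{2/p}A_2)+D_p(r_2^{2/p}A_2,A_3)=|P_1P_3|=2\sin\tfrac{\theta_{12}+\theta_{23}}2<2\sin\tfrac{\theta_{13}}2=D_p(A_1,A_3),
\]
which contradicts Theorem~\ref{thm:main}. By symmetry, all three triangle inequalities hold.

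\emph{Step 3: positivity.} A symmetric matrix $[\cos\theta_{ij}]$ with unit diagonal and $\theta_{ij}\in[0,\pi]$ is positive semidefinite when the angles satisfy the triangle inequalities and $\theta_{12}+\theta_{23}+\theta_{13}\le2\pi$. The second condition is automatic here, since each angle is at most $\pi/2$. One way to see the criterion is to realize the angles as a spherical triangle on $S^2$. Another is the determinant identity
\[
1-c_{12}^2-c_{23}^2-c_{13}^2+2c_{12}c_{23}c_{13}=\bigl(\cos(\theta_{12}-\theta_{23})-c_{13}\bigr)\bigl(c_{13}-\cos(\theta_{12}+\theta_{23})\bigr),
\]
with $c_{ij}:=\cos\theta_{ij}$. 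Both factors are nonnegative exactly under $|\theta_{12}-\theta_{23}|\le\theta_{13}\le\theta_{12}+\theta_{23}$, using monotonicity of $\cos$ on $[0,\pi]$. Together with the nonnegative $2\times2$ minors from Step~2, this gives \eqref{eq:lin-block-norm-psd}. Undoing the normalization is a diagonal congruence, which preserves positivity.

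The main obstacle is not in these reductions but in Theorem~\ref{thm:main} itself, which I am taking as given. Within the proof above, the only delicate points are the partial-isometry bookkeeping in Step~1 when block sizes differ, and the degenerate cases where some $A_i=0$ or $\theta_{12}+\theta_{23}=0$; both are handled by the same argument with trivial modifications.
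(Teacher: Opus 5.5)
Your proof is correct. Steps~1 and~3 are essentially the paper's. The paper gets $\pnorm{H_{ij}}{p}^p=\Phi_p(A_i,A_j)$ from the equality of the nonzero spectra of $M^*M$ and $MM^*$ rather than from a polar decomposition, and your Step~3 is Lemma~\ref{lem:three-point-cosine-criterion} with the same determinant factorization.

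The real difference is Step~2, which runs in the opposite direction from the paper. The paper cites the angular triangle inequality (Theorem~\ref{thm:angular-triangle}) directly; it proves that first, from the support inequality, and then \emph{derives} Theorem~\ref{thm:main} from it by a planar comparison. You instead assume Theorem~\ref{thm:main} and recover the angular inequality by putting a rescaled middle point $r_2^{2/p}A_2$ on the chord $[P_1,P_3]$. This is not circular, since Theorem~\ref{thm:main} does not use the block theorem.

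Inside the paper your route is a detour (angles $\Rightarrow$ cone metric $\Rightarrow$ angles), but it gives something the paper does not state. Remark~\ref{rem:lin-sharp-in-p} notes that block positivity makes $[\Phi_p(A_i,A_j)]$ a Gram matrix and so yields the triangle inequality. Combined with your argument, this shows that for each $p$ the three-block statement is \emph{equivalent} to the metric property of $D_p$ on the full cone, rather than strictly stronger. So any independent proof of Theorem~\ref{thm:main} would give Theorem~\ref{thm:lin-block-norm} at once.

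Your argument genuinely needs the triangle inequality at points off $\sphere$, since typically $r_2<1$. The chordal inequality on the sphere alone (Corollary~\ref{cor:normalized-triangle}) is strictly weaker than the angular one. For example, the angles $\theta_{12}=\theta_{23}=\pi/8$, $\theta_{13}=0.8$ satisfy the chordal inequality but violate the angular one.
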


The endpoints \(p=1\) and \(p=2\) were known
\citep{drury2014positive,marcus1971partitioned}.  For semidefinite block input,
the conclusion can be singular (Example~\ref{ex:lin-tight}).

\citet{vuong2026three} have also proved Theorem~\ref{thm:lin-block-norm}
and the metric property in Theorem~\ref{thm:main}, for finite tracial von
Neumann algebras, by a different method.\footnote{Our proofs of
Theorems~\ref{thm:main} and~\ref{thm:lin-block-norm} were obtained
independently of \citet{vuong2026three}, which appeared on arXiv on
8~October 2026.  They are recorded, together with the barycenter results of
Section~\ref{sec:barycenters}, in commits \texttt{02d4b2b5f5ae} and
\texttt{8baca98a9389} of the author's repository, pushed on 12~August 2026
at 10:08 and 21:02~UTC.}  Their proof uses tracial Carlen--Lieb convexity
to control the associated \(3\times3\) quadratic form.  Our partition
inequality gives an angular proof and also the sufficient decrease estimate
for the barycenter iteration (Lemma~\ref{lem:sufficient-decrease}).

\subsection{Barycenters}
The metric \(D_p\) defines Fr\'echet means
\citep{frechet1948elements}.  Given \(A_1,\dots,A_m\in\pp_n\) and weights
\(w\) in the probability simplex \(\simplex\), a \emph{barycenter} is a
minimizer of \(\Fc_w(X)=\sum_iw_iD_p(X,A_i)^2\) over \(\pp_n\).  The functional
\(\Fc_w\) is coercive and convex in the cone's linear structure,
and strictly convex for \(p>1\).  Barycenters exist throughout the range,
with uniqueness for \(p>1\)
(Theorem~\ref{thm:existence}).  Positive definite barycenters are
characterized by a fixed-point equation.

\begin{theorem}[Fixed-point characterization]\label{thm:euler}
Let \(1\le p\le2\), \(A_1,\dots,A_m\in\pp_n\), \(w\in\simplex\), and let
\(X\succ0\).  Then \(X\) is a barycenter if and only if
\begin{equation}\label{eq:euler}
  X^p=\sum_{i=1}^mw_i\bigl(X^{1/2}A_iX^{1/2}\bigr)^{p/2}.
\end{equation}
\end{theorem}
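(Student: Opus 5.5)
Since \(\Fc_w\) is convex on \(\pp_n\) (as stated above, with existence from Theorem~\ref{thm:existence}), a point \(X\succ0\) in the interior of the cone is a minimizer if and only if the gradient of \(\Fc_w\) vanishes there.  So the plan is: compute the derivative of \(\Fc_w\) at a positive definite \(X\), set it to zero, and rewrite the resulting equation as \eqref{eq:euler}.  Write
\[
  \Fc_w(X)=\trace X^p+\sum_iw_i\trace A_i^p-2\sum_iw_i\Phi_p(X,A_i).
\]
The first term has gradient \(pX^{p-1}\).  The main work is the gradient of \(X\mapsto\Phi_p(X,A)\).

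To handle \(\Phi_p(X,A)\), I would use the symmetric form \(\Phi_p(X,A)=\trace(A^{1/2}XA^{1/2})^t\), with \(t=p/2\).  The two forms agree because \(X^{1/2}AX^{1/2}\) and \(A^{1/2}XA^{1/2}\) share their nonzero eigenvalues.  This form is linear in \(X\) inside the power.  Where \(A^{1/2}XA^{1/2}\) is invertible, the derivative in direction \(E\) is
\[
  t\,\trace\bigl[(A^{1/2}XA^{1/2})^{t-1}A^{1/2}EA^{1/2}\bigr],
\]
so the gradient is \(G=t\,A^{1/2}(A^{1/2}XA^{1/2})^{t-1}A^{1/2}\).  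I then transfer this to the \(X\)-side by a polar-type identity.  Set \(M=A^{1/2}X^{1/2}\), so that \(M^*M=X^{1/2}AX^{1/2}\) and \(MM^*=A^{1/2}XA^{1/2}\).  Then
\[
  M^*(MM^*)^{t-1}M=(M^*M)^{t}.
\]
This gives
\[
  X^{1/2}GX^{1/2}=t\,(X^{1/2}AX^{1/2})^t,\qquad\text{hence}\qquad G=t\,X^{-1/2}(X^{1/2}AX^{1/2})^tX^{-1/2}.
\]
The last expression makes sense for singular \(A\) as well.

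Setting the total gradient to zero gives
\[
  pX^{p-1}=2t\sum_iw_iX^{-1/2}(X^{1/2}A_iX^{1/2})^tX^{-1/2}.
\]
Multiplying by \(X^{1/2}\) on both sides, and using \(2t=p\), this is exactly \eqref{eq:euler}.  Sufficiency of the equation then follows from convexity.  Necessity follows because \(X\) is an interior minimizer of a differentiable function.

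The main obstacle is differentiability when some \(A_i\) is singular.  In that case \(A_i^{1/2}XA_i^{1/2}\) is singular, and \(s\mapsto s^t\) with \(t<1\) is not differentiable at \(0\).  I would avoid this by working on the \(X\)-side, with \(\Phi_p(X,A)=\trace(X^{1/2}AX^{1/2})^t\), and restricting to the range of \(A\).
\begin{itemize}
\item Write \(A=VV^*\), with \(V\) of full column rank.
\item Then \(\Phi_p(X,A)=\trace(V^*XV)^t\).
\item For \(X\succ0\), the matrix \(V^*XV\) is positive definite on the smaller space.
\item So \(\Phi_p(\cdot,A)\) is smooth near \(X\), with gradient \(tV(V^*XV)^{t-1}V^*\).
\end{itemize}
The same polar identity, now with \(M=V^*X^{1/2}\), converts this gradient to \(tX^{-1/2}(X^{1/2}AX^{1/2})^tX^{-1/2}\).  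This settles the singular case.

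With differentiability established on the open cone, the convexity argument above completes the proof for all \(1\le p\le2\).  At \(p=1\) there may be several barycenters; the equivalence still holds, because convexity alone suffices for the first-order characterization.
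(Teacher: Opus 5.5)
Your proposal is correct and follows essentially the paper's route. The paper's Lemma~\ref{lem:differential} computes the gradient of \(\Phi_p(\cdot,A)\) from the symmetric form \(\trace(A^{1/2}XA^{1/2})^t\), compressed to \(\ran A\) (your factorization \(A=VV^*\)), and converts it with the same singular-value identity \(M^*(MM^*)^{t-1}M=(M^*M)^t\); the theorem's proof then sets \eqref{eq:frechet-gradient} to zero, using interiority for necessity and convexity (Proposition~\ref{prop:convexity}) for sufficiency, and the only cosmetic difference is that you treat invertible and singular \(A_i\) separately while the paper handles both at once.
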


At \(p=2\) the equation reduces to \(X=\sum_iw_iA_i\).  At \(p=1\), for real
symmetric data, it characterizes the barycenter of the Gaussians \(N(0,A_i)\) in
\(2\)-Wasserstein space \citep{agueh2011barycenters,alvarez2016fixed}
(Corollaries~\ref{cor:p2} and~\ref{cor:p1}).  For positive definite data and
\(1\le p<2\), writing the equation in the variables \(X^p\) and \(A_i^p\)
recovers the characterization of the minimizer of a weighted sum of the
\(\alpha\)-\(z\) Bures--Wasserstein divergences above
\citep{dinh2021alphaz,jeong2023right}.

The equation suggests the
iteration \(X\mapsto\Tc(X)\) of \eqref{eq:fixed-point-map}.  For positive
definite data and positive weights, one step of this iteration decreases \(\Fc_w\) by at least the
square of the distance it moves (Lemma~\ref{lem:sufficient-decrease}), and the
iteration converges linearly from every positive definite start
(Theorem~\ref{thm:linear-convergence}, and
Proposition~\ref{prop:bures-endpoint-rate} at \(p=1\)).  An \(m\)-point form of
the support inequality below also gives a lower bound for the minimum value
of \(\Fc_w\) in terms of the pairwise distances of the data
(Theorem~\ref{thm:variance}).

\subsection{Notation and elementary facts}\label{sec:notation}

All matrices are finite-dimensional.  We write \(X\succ0\) and \(X\succeq0\)
for positive definiteness and semidefiniteness, take matrix powers as the
principal powers on the support, and approximate by \(X+\varepsilon I\) when
inverses appear.  For \(A\in\pp_n\), the radial coordinate is
\(r_p(A):=(\trace A^p)^{1/2}\).  The H\"older
inequality~\citep{simon2005trace,bhatia1997matrix} and
\eqref{eq:affinity-definition} give
\begin{equation}\label{eq:holder-affinity}
  0\le\Phi_p(A,B)\le\pnorm{A^{1/2}}{2p}^p\pnorm{B^{1/2}}{2p}^p=r_p(A)r_p(B),
\end{equation}
so \(D_p(A,B)^2\ge(r_p(A)-r_p(B))^2\).  On the normalized positive sphere
\(\sphere:=\set{A\in\pp_n:\trace A^p=1}\), homogeneity gives, for nonzero
\(A\) and \(B\),
\begin{equation}\label{eq:normalization}
  \what A:=r_p(A)^{-2/p}A\in\sphere,
  \qquad
  \Phi_p(A,B)=r_p(A)r_p(B)\Phi_p(\what A,\what B).
\end{equation}

\section{The Schatten partition inequality}\label{sec:partition}

The partition inequality follows once the column map below has a contractive
left inverse.  Write \(p'\) for the conjugate
exponent, so \(1/p+1/p'=1\).

\begin{proof}[\textbf{Proof of Theorem~\ref{thm:partition-intro}}]
For \(1<p\le2\) and \(H_j\succ0\) for every \(j\), define the column and row maps
\begin{align*}
  \Lc_p(X)&:=\bigl(H_j^{1/p}H^{-1/p}X\bigr)_{j=1}^m, &
  \Rc_p((X_j)_{j=1}^m)&:=\sum_{j=1}^mH^{-1/p'}H_j^{1/p'}X_j.
\end{align*}
They satisfy the exact left-inverse identity
\begin{align}
  \Rc_p\Lc_p(X)
  &=\sum_jH^{-1/p'}H_j^{1/p'}H_j^{1/p}H^{-1/p}X\notag\\
  &=H^{-1/p'}\left(\sum_jH_j\right)H^{-1/p}X=X.
  \label{eq:left-inverse}
\end{align}
It therefore remains to show that
\begin{equation}\label{eq:row-contraction-p}
  \pnorm{\Rc_p((X_j))}{p}
  \le
  \left(\sum_j\pnorm{X_j}{p}^p\right)^{1/p}.
\end{equation}

Consider the analytic family
\(\Rc_z((X_j)):=\sum_jH^{-z}H_j^zX_j\) on the strip
\(0\le\real z\le1/2\).
On the boundary \(z=is\), all imaginary powers are unitary, so
\begin{equation}\label{eq:l1-boundary}
  \pnorm{\Rc_{is}((X_j))}{1}
  \le\sum_j\pnorm{X_j}{1}.
\end{equation}
On the boundary \(z=1/2+is\), put \(V_j:=H^{-1/2-is}H_j^{1/2+is}\).  Then
\begin{equation}\label{eq:row-coisometry}
  \sum_jV_jV_j^*
  =H^{-1/2-is}\left(\sum_jH_j\right)H^{-1/2+is}=I.
\end{equation}
Thus \([V_1\ \cdots\ V_m]\) has operator norm one, and its
Hilbert--Schmidt amplification gives
\begin{equation}\label{eq:l2-boundary}
  \pnorm{\Rc_{1/2+is}((X_j))}{2}^2
  \le\sum_j\pnorm{X_j}{2}^2.
\end{equation}

Interpolate \eqref{eq:l1-boundary} and \eqref{eq:l2-boundary} at
\(\theta=2/p'\), so that \(1/p=1-\theta/2\) and \(\theta/2=1/p'\).
The complex interpolation identities for finite Schatten classes and their
\(\ell_q\) sums \citep{bergh1976interpolation,simon2005trace} give
\(\Rc_{1/p'}:\ell_p^m(S_p)\longrightarrow S_p\) with norm at most one.
Since \(\Rc_{1/p'}=\Rc_p\), this proves \eqref{eq:row-contraction-p}.
Combining it with \eqref{eq:left-inverse},
\[
  \pnorm{X}{p}
  =\pnorm{\Rc_p\Lc_p(X)}{p}
  \le\pnorm{\Lc_p(X)}{\ell_p(S_p)}.
\]
Taking the \(p\)th power proves \eqref{eq:partition-intro}.

At \(p=1\), the identity \(X=\sum_jH_jH^{-1}X\) and the trace-norm triangle
inequality give the result directly.  Semidefinite \(H_j\) follow by replacing
\(H_j\) by \(H_j+\varepsilon I\) and passing to the limit.
\end{proof}

\section{Proof of \texorpdfstring{Theorem~\ref{thm:main}}{Theorem 1.1}}\label{sec:proof-main}

We apply the partition inequality at a weighted maximizer to control angles
on the sphere, then use a planar comparison to reach the full cone.

\subsection{The weighted support inequality}\label{sec:support}

To evaluate the partition inequality at a maximizer, we need its Lagrange
condition and strict positivity.  We first compute the differential.

\begin{lemma}[Differential of the affinity]\label{lem:differential}
Let \(X\succ0\), \(D\succeq0\) and \(H=H^*\).  Then
\begin{equation}\label{eq:differential}
  D_X\Phi_p(X,D)[H]
  =t\,\trace\!\left[
  X^{-1/2}(X^{1/2}DX^{1/2})^tX^{-1/2}H
  \right].
\end{equation}
\end{lemma}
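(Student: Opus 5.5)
We need the derivative of $X\mapsto \trace (X^{1/2}DX^{1/2})^t$. The plan is to avoid differentiating $X^{1/2}$ directly and instead rewrite $\Phi_p$ through a similarity, so that the dependence on $X$ becomes linear inside the power.

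\emph{Step 1: rewrite $\Phi_p$ via $D^{1/2}XD^{1/2}$.} The matrices $X^{1/2}DX^{1/2}=(X^{1/2}D^{1/2})(X^{1/2}D^{1/2})^*$ and $D^{1/2}XD^{1/2}=(X^{1/2}D^{1/2})^*(X^{1/2}D^{1/2})$ have the same nonzero eigenvalues. Hence
\[
  \Phi_p(X,D)=\trace\bigl(D^{1/2}XD^{1/2}\bigr)^t ,
\]
where $t>0$ ensures that zero eigenvalues contribute nothing. The map $X\mapsto D^{1/2}XD^{1/2}$ is linear.

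\emph{Step 2: apply the trace-function derivative formula, with care at the kernel.} For positive definite $M$ and $f$ differentiable on $(0,\infty)$, one has $D_M\trace f(M)[K]=\trace f'(M)K$, by Daleckii--Krein or by first-order perturbation of eigenvalues. If $D\succ0$, then $M=D^{1/2}XD^{1/2}\succ0$ and
\[
  D_X\Phi_p[H]=t\,\trace\bigl[(D^{1/2}XD^{1/2})^{t-1}D^{1/2}HD^{1/2}\bigr].
\]
For singular $D$ the function $s^t$ is not differentiable at $0$, so this step needs an argument. It is cleaner to work with $N=X^{1/2}DX^{1/2}$ directly. The map $X\mapsto X^{1/2}DX^{1/2}$ is smooth, and $N$ has a kernel that moves with $X$. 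Write $N=S^*S$ with $S=D^{1/2}X^{1/2}$, and compare with $SS^*=D^{1/2}XD^{1/2}$. Since $\ker(D^{1/2}XD^{1/2})=\ker D$ for $X\succ0$, the nonzero spectrum of $D^{1/2}(X+\epsilon H)D^{1/2}$ restricted to $\ran D$ stays bounded away from zero for small $\epsilon$. We therefore restrict to $\ran D$: there $D^{1/2}XD^{1/2}$ is a positive definite compression, and the formula holds with inverse powers taken on $\ran D$. I expect this kernel issue to be the only real obstacle. As an alternative, one can prove the case $D\succ0$ and pass to the limit $D+\varepsilon I\to D$, using locally uniform convergence of derivatives of the concave smooth functions.

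\emph{Step 3: convert to the stated form.} We need the identity
\[
  D^{1/2}(D^{1/2}XD^{1/2})^{t-1}D^{1/2}=X^{-1/2}(X^{1/2}DX^{1/2})^tX^{-1/2}.
\]
Take the polar/SVD relation $S=D^{1/2}X^{1/2}$, for which $g(SS^*)S=Sg(S^*S)$ holds for any function $g$ with $g(0)$ handled on the supports. Then
\[
  D^{1/2}(SS^*)^{t-1}D^{1/2}=D^{1/2}(SS^*)^{t-1}SX^{-1/2}=D^{1/2}S(S^*S)^{t-1}X^{-1/2}.
\]
Moreover $D^{1/2}S=D X^{1/2}=X^{-1/2}(X^{1/2}DX^{1/2})=X^{-1/2}S^*S$. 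Combining these gives $X^{-1/2}(S^*S)^tX^{-1/2}$, which is the claimed expression. Substituting into the trace from Step 2 yields \eqref{eq:differential}. The negative power on the support is harmless, because it is multiplied by $S$, which kills the kernel.
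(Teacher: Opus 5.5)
Your proposal is correct and follows essentially the same route as the paper: you rewrite \(\Phi_p(X,D)=\trace(D^{1/2}XD^{1/2})^t\), differentiate through the compression to \(\ran D\), which stays positive definite near \(X\succ0\), and then convert the coefficient \(D^{1/2}(D^{1/2}XD^{1/2})^{t-1}D^{1/2}\) into \(X^{-1/2}(X^{1/2}DX^{1/2})^tX^{-1/2}\). The only difference is cosmetic and lies in the last step: you use the intertwining relation \(g(SS^*)S=Sg(S^*S)\), whereas the paper uses the SVD identity \(M(M^*M)^{t-1}M^*=(MM^*)^t\) for \(M=X^{1/2}D^{1/2}\), which is the target identity conjugated by \(X^{1/2}\).
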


\begin{proof}
By symmetry, \(\Phi_p(X,D)=\trace(D^{1/2}XD^{1/2})^t\).
Let \(\Pi\) be the orthogonal projection onto \(\ran D\).  The matrix
\(D^{1/2}XD^{1/2}\) vanishes on \(\ran\Pi^{\perp}\).  Its compression to
\(\ran\Pi\) depends linearly on \(X\) and is positive definite whenever
\(X\succ0\).  Since \(M\mapsto M^t\) is real analytic on positive definite matrices,
\(\Phi_p(\cdot,D)\) is real analytic near \(X\), and differentiation under the
trace gives
\[
  D_X\Phi_p(X,D)[H]
  =t\,\trace\!\left[D^{1/2}(D^{1/2}XD^{1/2})^{t-1}D^{1/2}H\right],
\]
the power \((\cdot)^{t-1}\) being taken on the support.  Let
\(M=X^{1/2}D^{1/2}\), with singular value decomposition \(M=U\Sigma V^*\).
Then \(M(M^*M)^{t-1}M^*=U\Sigma^{2t}U^*=(MM^*)^t\), because
\(\sigma\cdot\sigma^{2t-2}\cdot\sigma=\sigma^{2t}\) for \(\sigma>0\) and both
sides vanish for \(\sigma=0\).  This identity shows that the coefficient of \(H\) equals
the one in \eqref{eq:differential}.
\end{proof}

\begin{lemma}[Positivity of a weighted maximizer]\label{lem:positive-maximizer}
Let \(a_1,\dots,a_m>0\) and \(B_1,\dots,B_m\succ0\).  A maximizer of
\(F(X):=\sum_{i=1}^ma_i\Phi_p(X,B_i)\) over \(X\succeq0\) subject to
\(\trace X^p=1\) is positive definite.
\end{lemma}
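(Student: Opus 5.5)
Argue by contradiction: suppose a maximizer \(X\) has nontrivial kernel, and construct a feasible perturbation that strictly increases \(F\). Existence of a maximizer is automatic, since the constraint set \(\sphere\) is compact and \(F\) is continuous. Write \(P\) for the projection onto \(\ker X\) and fix a unit vector \(v\in\ker X\).

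The perturbation is \(X_\varepsilon:=c_\varepsilon\,(X+\varepsilon vv^*)\), where \(c_\varepsilon=(1+\varepsilon^p)^{-1/p}\) restores \(\trace X_\varepsilon^p=1\). This works because \(X\) and \(vv^*\) have orthogonal supports, so \(\trace(X+\varepsilon vv^*)^p=1+\varepsilon^p\). By homogeneity, \(\Phi_p(cY,B)=c^t\Phi_p(Y,B)\). The normalization therefore costs
\[
F(X)\bigl(1-(1+\varepsilon^p)^{-t/p}\bigr)=O(\varepsilon^p),
\]
and \(t/p=1/2\) gives the factor \((1+\varepsilon^p)^{-1/2}\). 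It then suffices to show that the gain \(\sum_ia_i\bigl[\Phi_p(X+\varepsilon vv^*,B_i)-\Phi_p(X,B_i)\bigr]\) is of order \(\varepsilon^{t}\) with a positive constant. Since \(t=p/2<p\), the gain would then dominate the \(O(\varepsilon^p)\) loss for small \(\varepsilon\), which is the desired contradiction.

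For the gain, use the symmetric form \(\Phi_p(Y,B)=\trace(B^{1/2}YB^{1/2})^t\) and set \(M:=B_i^{1/2}XB_i^{1/2}\succeq0\) and \(u:=B_i^{1/2}v\neq0\); here \(B_i\succ0\) guarantees \(u\neq0\). We need a lower bound on \(\trace(M+\varepsilon uu^*)^t-\trace M^t\). Because \(s\mapsto s^t\) is operator monotone, the trace is monotone and the increment is nonnegative, but we need the sharper \(\varepsilon^t\) rate. Split \(u=u_0+u_1\), with \(u_0\in\ker M\) and \(u_1\in\ran M\). Note \(\ker M=B_i^{-1/2}\ker X\). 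Also \(u_0\neq0\), because \(\ip{u}{B_i^{-1}w}=\ip{v}{w}\) for \(w\in\ker X\) and we may take \(w=v\).

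The key estimate is that a rank-one perturbation with a component in the kernel raises the trace of the \(t\)-power by at least \(\kappa\varepsilon^t\) for some \(\kappa>0\). To prove it, use concavity and monotonicity of \(\trace f\) for operator monotone \(f\), or more directly the variational characterization for \(0<t<1\),
\[
\trace Y^t=\min_{Z\succ0}\trace\bigl(tYZ^{t-1}+(1-t)Z^t\bigr)\quad\text{(up to constants)}.
\]
Alternatively, compress: with \(Q\) the projection onto \(\ker M\), pinching gives
\[
\trace(M+\varepsilon uu^*)^t\ \ge\ \trace\bigl((1-Q)(M+\varepsilon uu^*)(1-Q)\bigr)^t+\trace\bigl(Q\,\varepsilon uu^*Q\bigr)^t.
\]
This is the pinching inequality for the concave function \(s^t\), which holds because a pinching is a unital mixture of unitary conjugations. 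The second term equals \(\varepsilon^t\norm{u_0}^{2t}\). The first term is at least \(\trace M^t\), since the compressed matrix dominates \((1-Q)M(1-Q)=M\) and \(\trace(\cdot)^t\) is monotone. Summing with weights \(a_i>0\) gives a gain of at least \(\varepsilon^t\sum_ia_i\norm{u_0^{(i)}}^{2t}>0\).

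The main obstacle is making this \(\varepsilon^t\) lower bound rigorous, and the pinching step is where I expect the care to go. I would justify it as \(\trace f(\mathcal C(Y))\ge\trace f(Y)\) for concave \(f\) and a pinching \(\mathcal C\), a consequence of majorization \(\lambda(\mathcal C(Y))\prec\lambda(Y)\). This holds even though \(uu^*\) has off-diagonal blocks.
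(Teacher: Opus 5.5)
Your overall frame matches the paper's: perturb a singular maximizer into its kernel, renormalize at a cost of $O(\varepsilon^p)$, and show that the affinities gain order $\varepsilon^t$, where $t=p/2<p$. The gap is in the one step that carries the argument: you apply the pinching inequality in the wrong direction. Let $f$ be concave and let $\mathcal C(Y)=QYQ+(1-Q)Y(1-Q)$ be the pinching. The majorization $\lambda(\mathcal C(Y))\prec\lambda(Y)$ gives $\trace f(\mathcal C(Y))\ge\trace f(Y)$, as you write in your last paragraph. That is an \emph{upper} bound on $\trace(M+\varepsilon uu^*)^t$, but your displayed inequality asserts the reverse.

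The reverse already fails in your configuration, a singular $M$ and a vector $u$ with components in both $\ker M$ and $\ran M$. Take $t=\tfrac12$, $M=\diag(1,0)$ and $u=(1,1)^{\mathsf T}$. Then $Y:=M+\varepsilon uu^*$ has trace $1+2\varepsilon$ and determinant $\varepsilon$. Its pinching $\diag(1+\varepsilon,\varepsilon)$ has the same trace and determinant $\varepsilon+\varepsilon^2$. Hence $\trace Y^{1/2}=(1+2\varepsilon+2\sqrt{\varepsilon})^{1/2}<(1+\varepsilon)^{1/2}+\varepsilon^{1/2}$, the right side of your display.

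The alternatives you mention do not rescue this step. The variational formula is a minimum over $Z$, so every choice of $Z$ again gives an upper bound. Concavity of $\varepsilon\mapsto\trace(M+\varepsilon uu^*)^t$ gives only a gain linear in $\varepsilon$. At $p=1$ such a gain does not beat the normalization loss $\tfrac12F(X)\varepsilon+O(\varepsilon^2)$ without further information.

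What you need is a lower bound on the newly created eigenvalue. Let $r=\rank X$, let $m_0$ be the smallest positive eigenvalue of $X$, and let $\Pi$ be the projection onto $\ran X\oplus\mathbb C v$. For $\varepsilon\le m_0$ we have $X+\varepsilon vv^*\succeq\varepsilon\Pi$, hence $M+\varepsilon uu^*\succeq\varepsilon B_i^{1/2}\Pi B_i^{1/2}$. The matrix $B_i^{1/2}\Pi B_i^{1/2}$ has the same nonzero spectrum as $\Pi B_i\Pi$, which has $r+1$ nonzero eigenvalues, all at least $\lambda_{\min}(B_i)$. Weyl monotonicity therefore gives $\lambda_{r+1}(M+\varepsilon uu^*)\ge\varepsilon\lambda_{\min}(B_i)$. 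Also $\lambda_j(M+\varepsilon uu^*)\ge\lambda_j(M)$ for $j\le r$, and $\rank M=r$. Together these give $\trace(M+\varepsilon uu^*)^t\ge\trace M^t+(\varepsilon\lambda_{\min}(B_i))^t$, and the rest of your argument then goes through.

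With this bound, the decomposition $u=u_0+u_1$ is unnecessary. Incidentally, the kernel vector in that step should be $B_i^{-1/2}w$, not $B_i^{-1}w$.

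The repaired argument is essentially the paper's proof. The paper perturbs by $\varepsilon E$, where $E$ is the projection onto all of $\ker X$. Then $X_\varepsilon\succeq\varepsilon I$, which directly bounds the bottom $k$ eigenvalues of $B_i^{1/2}X_\varepsilon B_i^{1/2}$ below by $\varepsilon\lambda_{\min}(B_i)$.
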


\begin{proof}
Existence follows from compactness.  Suppose that a maximizer \(X\) is
singular, let \(E\) be the orthogonal projection onto \(\ker X\), and put
\(k=\rank E>0\).  Let \(m_0>0\) be the smallest positive eigenvalue of \(X\).
For \(0<\varepsilon<m_0\), define \(X_\varepsilon:=X+\varepsilon E\).
Then \(X_\varepsilon\ge X\) and \(X_\varepsilon\ge\varepsilon I\).  For each
\(i\), set \(M_i:=B_i^{1/2}XB_i^{1/2}\) and
\(M_{i,\varepsilon}:=B_i^{1/2}X_\varepsilon B_i^{1/2}\).
Since \(B_i\succ0\), \(\rank M_i=\rank X=n-k\).  Eigenvalue monotonicity gives
\(\lambda_j(M_{i,\varepsilon})\ge\lambda_j(M_i)\) for all \(j\), while
\(M_{i,\varepsilon}\ge\varepsilon\lambda_{\min}(B_i)I\).  Hence
\begin{equation}\label{eq:kernel-gain}
  \trace M_{i,\varepsilon}^t
  \ge
  \trace M_i^t+k\bigl(\varepsilon\lambda_{\min}(B_i)\bigr)^t.
\end{equation}
Since \(\trace X_\varepsilon^p=1+k\varepsilon^p\), normalize by
\(\widetilde X_\varepsilon:=(1+k\varepsilon^p)^{-1/p}X_\varepsilon\).
The objective is homogeneous of degree \(t\) in its first argument.  By
\eqref{eq:kernel-gain},
\[
  F(\widetilde X_\varepsilon)
  \ge
  (1+k\varepsilon^p)^{-t/p}
  \left(F(X)+c\varepsilon^t\right),
  \qquad
  c:=k\sum_ia_i\lambda_{\min}(B_i)^t>0 .
\]
Since \(t=p/2<p\), the positive \(\varepsilon^t\) term dominates the
\(O(\varepsilon^p)\) normalization loss.  Thus
\(F(\widetilde X_\varepsilon)>F(X)\) for all sufficiently small
\(\varepsilon\), a contradiction.
\end{proof}

We apply these lemmas to a weighted sum of two affinities.

\begin{prop}[Weighted support inequality]\label{prop:support}
Let \(A,B,C\in\sphere\).  Then, for every \(a,b\ge0\),
\begin{equation}\label{eq:support}
  a\Phi_p(A,B)+b\Phi_p(A,C)
  \le
  \sqrt{a^2+b^2+2ab\Phi_p(B,C)}.
\end{equation}
\end{prop}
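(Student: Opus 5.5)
The plan is to maximize the left-hand side over \(A\in\sphere\) and to bound the maximum using the partition inequality (Theorem~\ref{thm:partition}). Fix \(B,C\in\sphere\) and \(a,b\ge0\). If \(ab=0\), the claim reduces to \(\Phi_p(A,B)\le1\), which is \eqref{eq:holder-affinity}. So assume \(a,b>0\). First replace \(B,C\) by \(B+\varepsilon I,C+\varepsilon I\), renormalized to \(\sphere\); by continuity of \(\Phi_p\) it suffices to treat \(B,C\succ0\). Let \(X\) maximize \(F(X)=a\Phi_p(X,B)+b\Phi_p(X,C)\) on \(\sphere\). By Lemma~\ref{lem:positive-maximizer}, \(X\succ0\). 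It then suffices to prove \(F(X)^2\le a^2+b^2+2ab\Phi_p(B,C)\).

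Next I would write the Lagrange condition. Set \(P:=(X^{1/2}BX^{1/2})^t\) and \(Q:=(X^{1/2}CX^{1/2})^t\). Lemma~\ref{lem:differential} gives the gradient \(tX^{-1/2}(aP+bQ)X^{-1/2}\), and the constraint \(\trace X^p=1\) has gradient \(pX^{p-1}\). Hence \(aP+bQ=\lambda X^{p}\) for some multiplier \(\lambda\). Taking traces and using \(\trace X^p=1\) gives \(\lambda=F(X)\). So with \(F:=F(X)\) we get
\[
  H:=X^{p}=F^{-1}(aP+bQ),\qquad H_1:=F^{-1}aP,\quad H_2:=F^{-1}bQ .
\]

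Now apply Theorem~\ref{thm:partition} with \(m=2\) and this \(H_1,H_2,H\). The natural choice of test matrix \(Y\) is one that turns \(H^{-1/p}=X^{-1}\) into something recognizable, for instance \(Y=X^{1/2}\,W\) with a suitable unitary-type factor. The intended outcome is
\[
  \pnorm{H_1^{1/p}H^{-1/p}Y}{p}^p=\pnorm{(a/F)^{1/p}P^{1/p}X^{-1}Y}{p}^p ,
\]
with \(P^{1/p}=(X^{1/2}BX^{1/2})^{1/2}\). Writing \(X^{1/2}B^{1/2}=P^{1/(2t)}U\) by polar decomposition suggests the choice \(Y = X^{1/2}\)-weighted combinations of \(B^{1/2}\) and \(C^{1/2}\). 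Running the inequality in the dual form is cleaner: by the row-contraction \eqref{eq:row-contraction-p}, the element \(\Rc_p(X_1,X_2)\) with \(X_1\propto\sqrt a\,B^{1/2}\)-type and \(X_2\propto\sqrt b\,C^{1/2}\)-type blocks has \(p\)-norm at most the \(\ell_p\) norm. The aim is to produce \(F^2\le \pnorm{\sqrt a\,\alpha B^{1/2}+\sqrt b\,\beta C^{1/2}}{}\)-type quantities. Expanding \(\trace\bigl|aB^{1/2}U_1+bC^{1/2}U_2\bigr|\)-like squares gives \(a^2\trace B^p+b^2\trace C^p+2ab\,\mathrm{Re}(\text{cross term})\), and the cross term is bounded by \(\Phi_p(B,C)\) through Hölder. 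This is heuristically how \(a^2+b^2+2ab\Phi_p(B,C)\) should arise.

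The main obstacle is identifying the correct \(Y\) (equivalently the correct \(X_j\)) so that three things hold at once. First, one side reproduces \(F\) through \(\trace P=\Phi_p(X,B)\) and \(\trace Q=\Phi_p(X,C)\). Second, the other side is a quadratic expression in \(a,b\) whose cross term is exactly \(\Phi_p(B,C)=\pnorm{B^{1/2}C^{1/2}}{p}^p\). Third, the exponents match, since \(\Phi\) is a \(p\)th power of a \(p\)-norm while the target involves a square root. My first attempt would take \(Y\) so that \(H_j^{1/p}H^{-1/p}Y\) becomes \((a/F)^{1/p}(X^{1/2}BX^{1/2})^{1/2}\cdot(\ldots)\). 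I would then use the polar identities \((X^{1/2}BX^{1/2})^{1/2}=X^{1/2}B^{1/2}U_B\) so that the \(X\) factors cancel against \(H^{-1/p}=X^{-1}\). Finally I would optimize over the homogeneity scaling \(Y\mapsto sY\) and the relative weights. This should turn the \(p\)th-power inequality into the squared bound \eqref{eq:support}, possibly after an AM--GM step in the scaling parameter.
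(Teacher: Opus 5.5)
You set things up exactly as the paper does, but the proof stops at the step that carries the argument, so there is a genuine gap. The shared setup is: reduce to $a,b>0$ and $B,C\succ0$; take a maximizer $X\succ0$ of $F$ on $\sphere$ (Lemma~\ref{lem:positive-maximizer}); get $aP+bQ=\lambda X^p$ with $\lambda=F(X)$ from Lemma~\ref{lem:differential}; and feed $H=X^p=H_1+H_2$ into Theorem~\ref{thm:partition}.

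What is missing is a choice of the free argument $Y$, and the devices you suggest for finding it would not work:
\begin{itemize}
\item Rescaling $Y\mapsto sY$ changes nothing, because both sides of \eqref{eq:partition-intro} are homogeneous of degree $p$ in $Y$. So there is nothing to optimize and no AM--GM step.
\item Expanding the square of a Schatten norm into $a^2+b^2+2ab\cdot(\mathrm{cross\ term})$ is a Hilbert-space identity valid only at $p=2$. For $1<p<2$ there is no such cross term to bound by H\"older.
\item A single test matrix mixing the $B$- and $C$-parts would not evaluate in closed form.
\end{itemize}
As written, the proposal does not prove \eqref{eq:support}.

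\textbf{The missing step.} Apply the partition inequality once with $Y=H_1^{1/p}$, once with $Y=H_2^{1/p}$, and add the two results. Put $K_B:=(X^{1/2}BX^{1/2})^{1/2}=P^{1/p}$ and $K_C:=(X^{1/2}CX^{1/2})^{1/2}=Q^{1/p}$, and note $H^{-1/p}=X^{-1}$. Up to the scalars $(a/F)^{1/p}$ and $(b/F)^{1/p}$, every term evaluates exactly, with nothing lost to H\"older:
\begin{itemize}
\item $K_BX^{-1}K_B$ has the nonzero spectrum of $X^{-1/2}K_B^2X^{-1/2}=B$, so $\pnorm{K_BX^{-1}K_B}{p}^p=\trace B^p=1$.
\item $(K_CX^{-1}K_B)^*(K_CX^{-1}K_B)$ has the nonzero spectrum of $X^{1/2}BCX^{-1/2}$, which is similar to $BC$. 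Hence $\pnorm{K_CX^{-1}K_B}{p}^p=\Phi_p(B,C)$.
\item $\pnorm{H_1^{1/p}}{p}^p=\trace H_1=a\Phi_p(X,B)/F$.
\end{itemize}

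Thus \eqref{eq:partition-intro} with $Y=H_1^{1/p}$ reads $a^2+ab\Phi_p(B,C)\ge F\,a\Phi_p(X,B)$. The choice $Y=H_2^{1/p}$ gives $b^2+ab\Phi_p(B,C)\ge F\,b\Phi_p(X,C)$. Adding these and using $F=a\Phi_p(X,B)+b\Phi_p(X,C)$ yields $F^2\le a^2+b^2+2ab\Phi_p(B,C)$.

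So the square root in \eqref{eq:support} comes from the Lagrange identity $\lambda=F$, which makes the summed bound quadratic in $F$. It does not come from matching exponents inside a norm.
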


\begin{proof}
We first assume \(a,b>0\) and \(B,C\succ0\).  Maximize
\(F(X)=a\Phi_p(X,B)+b\Phi_p(X,C)\) over \(\sphere\), and let \(A_*\) be a
maximizer.  By Lemma~\ref{lem:positive-maximizer}, \(A_*\succ0\).  Put
\(Y:=A_*^{1/2}BA_*^{1/2}\) and \(Z:=A_*^{1/2}CA_*^{1/2}\).
By Lemma~\ref{lem:differential}, the Lagrange multiplier equation is
\begin{equation}\label{eq:weighted-balance}
  aY^t+bZ^t=\lambda A_*^p.
\end{equation}
Taking traces and using \(\trace A_*^p=1\) identifies
\(\lambda=a\Phi_p(A_*,B)+b\Phi_p(A_*,C)=F(A_*)\).

Set \(P:=aY^t\), \(Q:=bZ^t\), and \(T:=P+Q=\lambda A_*^p\), and abbreviate
\(x_*:=\Phi_p(A_*,B)\), \(y_*:=\Phi_p(A_*,C)\), and \(z:=\Phi_p(B,C)\).
Since \(t/p=1/2\), we have \(P^{1/p}=a^{1/p}Y^{1/2}\),
\(Q^{1/p}=b^{1/p}Z^{1/2}\), and \(T^{-1/p}=\lambda^{-1/p}A_*^{-1}\).
These identities give the exact norm evaluations
\begin{align}
  \pnorm{Q^{1/p}T^{-1/p}P^{1/p}}{p}^p&=\frac{ab}{\lambda}\,z,
  \label{eq:cross-exact}\\
  \pnorm{P^{1/p}T^{-1/p}P^{1/p}}{p}^p&=\frac{a^2}{\lambda},
  \label{eq:self-P-exact}\\
  \pnorm{Q^{1/p}T^{-1/p}Q^{1/p}}{p}^p&=\frac{b^2}{\lambda}.
  \label{eq:self-Q-exact}
\end{align}
For the cross identity, put \(X:=Z^{1/2}A_*^{-1}Y^{1/2}\).  Then \(X^*X\) has
the same nonzero spectrum as
\(YA_*^{-1}ZA_*^{-1}=A_*^{1/2}BCA_*^{-1/2}\), which is similar to \(BC\).
Therefore \(\pnorm{X}{p}^p=\trace(X^*X)^{p/2}=\Phi_p(B,C)\), and
\eqref{eq:cross-exact} follows from the scalar factors above.  For the first
self identity, the positive matrices \(Y^{1/2}A_*^{-1}Y^{1/2}\) and
\(A_*^{-1/2}YA_*^{-1/2}=B\) have the same nonzero spectrum, so their
\(p\)th power traces agree.  The
calculation for \(C\) is the same.  The
normalization \(\trace B^p=\trace C^p=1\) then yields
\eqref{eq:self-P-exact}--\eqref{eq:self-Q-exact}.

Apply Theorem~\ref{thm:partition} to \(T=Q+P\) with \(X=P^{1/p}\).  Using
\eqref{eq:cross-exact} and \eqref{eq:self-P-exact},
\[
  \frac{abz+a^2}{\lambda}
  \ge\pnorm{P^{1/p}}{p}^p
  =\trace P=ax_*.
\]
Thus \(a^2+abz\ge\lambda a x_*\).  Applying the same inequality with
\(X=Q^{1/p}\) gives \(b^2+abz\ge\lambda b y_*\).  Adding and using
\(\lambda=ax_*+by_*\),
\begin{equation}\label{eq:lambda-square}
  a^2+b^2+2abz
  \ge
  \lambda(ax_*+by_*)=\lambda^2.
\end{equation}
Since \(\lambda=\max_{X\in\sphere}F(X)\), evaluating at the given \(A\)
proves \eqref{eq:support}.
If \(B\) or \(C\) is singular, replace it by a positive definite normalized
approximation and pass to the limit.  If \(a=0\) or \(b=0\), the claim follows
directly from \eqref{eq:holder-affinity}.
\end{proof}

\subsection{The angular triangle inequality}\label{sec:angular}

To turn the support estimate into a triangle inequality, we express the
affinity as the cosine of an angle: on \(\sphere\), define
\(\vartheta_p(A,B):=\arccos\Phi_p(A,B)\).
By \eqref{eq:holder-affinity}, \(\vartheta_p(A,B)\in[0,\pi/2]\).

\begin{theorem}[Angular triangle inequality]\label{thm:angular-triangle}
For all \(A,B,C\in\sphere\),
\begin{equation}\label{eq:angular-triangle}
  \vartheta_p(B,C)
  \le
  \vartheta_p(B,A)+\vartheta_p(A,C).
\end{equation}
\end{theorem}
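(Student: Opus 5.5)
We will deduce the angular triangle inequality from the weighted support inequality, Proposition~\ref{prop:support}, by a choice of weights. Abbreviate \(\alpha:=\vartheta_p(B,A)\), \(\beta:=\vartheta_p(A,C)\) and \(\gamma:=\vartheta_p(B,C)\), all in \([0,\pi/2]\). If \(\alpha+\beta\ge\pi/2\) there is nothing to prove, since \(\gamma\le\pi/2\). We may therefore assume \(\alpha+\beta<\pi/2\). In that case \(\cos(\alpha+\beta)>0\), and the goal becomes \(\cos\gamma\le\cos(\alpha+\beta)\), that is,
\[
  \Phi_p(B,C)\le \cos\alpha\cos\beta-\sin\alpha\sin\beta .
\]
The affinity is symmetric (\(\Phi_p(A,B)=\trace(B^{1/2}AB^{1/2})^t\)), so \(\cos\alpha=\Phi_p(A,B)\) and \(\cos\beta=\Phi_p(A,C)\).

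The key step is to read \eqref{eq:support} as a planar law of cosines. Squaring \eqref{eq:support} gives, for all \(a,b\ge0\),
\[
  (a\cos\alpha+b\cos\beta)^2\le a^2+b^2+2ab\cos\gamma .
\]
In the plane, place unit vectors \(u,v\) at angle \(\gamma\) and a unit vector \(w\) with \(\ip{w}{u}=\cos\alpha\) and \(\ip{w}{v}=\cos\beta\). Then the inequality says that the linear functional \(w\) is dominated by \(\norm{au+bv}\) on the cone spanned by \(u,v\). This forces the angle between \(u\) and \(v\) to be at most \(\alpha+\beta\).

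To make this rigorous without geometry, we choose the weights \(a=\sin\beta\) and \(b=\sin\alpha\), which are nonnegative. The left side becomes
\[
  (\sin\beta\cos\alpha+\sin\alpha\cos\beta)^2=\sin^2(\alpha+\beta).
\]
Using
\[
  \sin^2\alpha+\sin^2\beta=\sin^2(\alpha+\beta)+2\sin\alpha\sin\beta\cos(\alpha+\beta),
\]
the squared inequality rearranges to
\[
  2\sin\alpha\sin\beta\,\bigl(\cos\gamma-\cos(\alpha+\beta)\bigr)\le 0 .
\]

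The degenerate cases are handled directly. If \(\sin\alpha\sin\beta>0\), dividing gives \(\cos\gamma\le\cos(\alpha+\beta)\), hence \(\gamma\le\alpha+\beta\), since \(\arccos\) is decreasing. If \(\alpha=0\), then \(\Phi_p(A,B)=1=r_p(A)r_p(B)\). This is equality in the Hölder inequality \eqref{eq:holder-affinity}, which on \(\sphere\) forces \(A=B\), and then \(\gamma=\beta\). Alternatively, we can avoid the equality case: fix \(\gamma\) and note that the inequality for \(\alpha>0\) passes to the limit, by a perturbation of \(A\) within \(\sphere\) that makes both angles positive. The case \(\beta=0\) is symmetric.

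The main obstacle is the choice of weights \((a,b)=(\sin\beta,\sin\alpha)\) and verifying the trigonometric identity above. Everything else is bookkeeping once Proposition~\ref{prop:support} is available, together with the equality case (or perturbation) at vanishing angles.
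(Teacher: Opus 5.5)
Your plan is the paper's own: dispose of \(\alpha+\beta\ge\pi/2\) using \(\gamma\le\pi/2\), then insert \(a=\sin\beta\), \(b=\sin\alpha\) into Proposition~\ref{prop:support}. However, the computation that carries the proof has the wrong sign in four places, so as written the nondegenerate case is not established.

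(1) Cosine is decreasing on \([0,\pi]\), so the goal \(\gamma\le\alpha+\beta\) is \(\cos\gamma\ge\cos(\alpha+\beta)\), i.e.\ \(\Phi_p(B,C)\ge\cos\alpha\cos\beta-\sin\alpha\sin\beta\) (this is Corollary~\ref{cor:cosine-law}). The inequality you call the goal is the reversed statement \(\gamma\ge\alpha+\beta\). It fails, e.g., when \(B=C\) and \(0<\alpha<\pi/4\).

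(2) The identity carries a minus sign:
\[
  \sin^2\alpha+\sin^2\beta=\sin^2(\alpha+\beta)-2\sin\alpha\sin\beta\cos(\alpha+\beta).
\]
At \(\alpha=\beta=\pi/6\) this reads \(\tfrac12=\tfrac34-\tfrac14\). With your plus sign, the squared support inequality only yields the vacuous \(\sin\alpha\sin\beta\bigl(\cos\gamma+\cos(\alpha+\beta)\bigr)\ge0\).

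(3) With the correct identity, the squared support inequality rearranges to
\[
  2\sin\alpha\sin\beta\bigl(\cos\gamma-\cos(\alpha+\beta)\bigr)\ge0,
\]
not \(\le0\).

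(4) From \(\cos\gamma\le\cos(\alpha+\beta)\), the decreasing function \(\arccos\) gives \(\gamma\ge\alpha+\beta\). Your concluding step (\emph{hence} \(\gamma\le\alpha+\beta\)) reaches the right statement only because it reverses the inequality once more.

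The repair needs no new idea. From (3), \(\sin\alpha\sin\beta>0\) gives \(\cos\gamma\ge\cos(\alpha+\beta)\), and monotonicity of cosine on \([0,\pi]\) gives \(\gamma\le\alpha+\beta\). This is exactly the paper's computation, phrased there by contradiction: if \(\gamma>\alpha+\beta\), the squared right side of \eqref{eq:support} minus \(\sin^2(\alpha+\beta)\) equals this quantity, which would then be negative.

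For vanishing angles your route differs from the paper's. Your main argument is that \(\alpha=0\) means equality in \eqref{eq:holder-affinity}, hence \(A=B\) and \(\gamma=\beta\). This is valid, but it imports the equality case of the Schatten H\"older inequality, a nontrivial external fact that the paper invokes only for definiteness in Theorem~\ref{thm:main}. The paper stays inside Proposition~\ref{prop:support}. With \(\cos\alpha=1\) and \(a,b>0\), dividing the squared inequality by \(ab\) gives \(2\cos\beta+(b/a)\cos^2\beta\le2\cos\gamma+b/a\). Letting \(b/a\downarrow0\) yields \(\cos\beta\le\cos\gamma\), i.e.\ \(\gamma\le\beta\). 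Your perturbation alternative is underspecified: it needs points of \(\sphere\) arbitrarily close to \(A\) at which both angles are positive, and you do not say why they exist.
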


\begin{proof}
Set \(\alpha:=\vartheta_p(A,B)\), \(\beta:=\vartheta_p(A,C)\), and
\(\gamma:=\vartheta_p(B,C)\).  Thus \(\Phi_p(A,B)=\cos\alpha\),
\(\Phi_p(A,C)=\cos\beta\), and \(\Phi_p(B,C)=\cos\gamma\).
If \(\alpha+\beta\ge\pi/2\), then \eqref{eq:angular-triangle} is automatic
because \(\gamma\le\pi/2\).  Assume therefore that \(\alpha+\beta<\pi/2\).

First suppose \(\alpha,\beta>0\).  Apply Proposition~\ref{prop:support} with
\(a=\sin\beta\) and \(b=\sin\alpha\).  The left-hand side becomes
\(\sin\beta\cos\alpha+\sin\alpha\cos\beta=\sin(\alpha+\beta)\).
If \(\gamma>\alpha+\beta\), then the difference between the square of the
right-hand side of \eqref{eq:support} and \(\sin^2(\alpha+\beta)\) is
\begin{equation}\label{eq:angle-difference}
  2\sin\alpha\sin\beta
  \bigl(\cos\gamma-\cos(\alpha+\beta)\bigr)<0,
\end{equation}
contradicting \eqref{eq:support}.  Hence \(\gamma\le\alpha+\beta\).

If \(\alpha=0\), apply \eqref{eq:support} with \(a>0\) and \(b>0\), divide the
squared inequality by \(ab\), and let \(b/a\downarrow0\).  This gives
\(\cos\beta\le\cos\gamma\), so \(\gamma\le\beta\).  The case \(\beta=0\)
is symmetric.
\end{proof}

\begin{corr}[Chordal triangle inequality on the normalized sphere]
\label{cor:normalized-triangle}
For \(A,B,C\in\sphere\),
\begin{equation}\label{eq:normalized-triangle}
  D_p(A,C)\le D_p(A,B)+D_p(B,C).
\end{equation}
\end{corr}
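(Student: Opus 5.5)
On the normalized sphere \(\sphere\), \(\trace A^p=\trace B^p=\trace C^p=1\). The plan is to rewrite the distance as a chord length and then deduce the chordal inequality from the angular triangle inequality of Theorem~\ref{thm:angular-triangle}. By \eqref{eq:distance-definition} and the definition of \(\vartheta_p\),
\[
  D_p(A,B)^2=2-2\Phi_p(A,B)=2-2\cos\vartheta_p(A,B)=4\sin^2\!\bigl(\vartheta_p(A,B)/2\bigr),
\]
so \(D_p(A,B)=2\sin(\vartheta_p(A,B)/2)\), and likewise for the other pairs. Write \(\alpha:=\vartheta_p(A,B)\), \(\beta:=\vartheta_p(B,C)\) and \(\gamma:=\vartheta_p(A,C)\), all in \([0,\pi/2]\). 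The claim \eqref{eq:normalized-triangle} then becomes
\[
  \sin(\gamma/2)\le\sin(\alpha/2)+\sin(\beta/2).
\]

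Next, apply Theorem~\ref{thm:angular-triangle} with \(B\) as the middle point, i.e.\ with the roles of \(A\) and \(B\) swapped. Using the symmetry of \(\Phi_p\), this gives \(\gamma\le\alpha+\beta\). Since \(\gamma/2\in[0,\pi/4]\) and \(\sin\) is increasing on \([0,\pi/2]\), we get
\[
  \sin(\gamma/2)\le\sin\bigl(\min\{(\alpha+\beta)/2,\ \pi/2\}\bigr).
\]
In fact \((\alpha+\beta)/2\le\pi/2\) always holds, because \(\alpha,\beta\le\pi/2\). So it suffices to show the elementary subadditivity
\[
  \sin(x+y)\le\sin x+\sin y \qquad \text{for } x,y\in[0,\pi/4].
\]
This follows from \(\sin(x+y)=\sin x\cos y+\cos x\sin y\le\sin x+\sin y\), since both cosines are at most \(1\) and both sines are nonnegative.

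There is essentially no obstacle here. The only points needing care are the symmetry \(\Phi_p(A,B)=\Phi_p(B,A)\), which holds because \(A^{1/2}B^{1/2}\) and its adjoint have the same singular values, and the use of monotonicity of \(\sin\) on the correct range. Together these reduce the corollary to Theorem~\ref{thm:angular-triangle}.
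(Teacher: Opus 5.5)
Your proof is correct and follows the paper's own argument. You rewrite \(D_p\) on \(\sphere\) as the chord \(2\sin(\vartheta_p/2)\), apply Theorem~\ref{thm:angular-triangle} with \(B\) as the middle point, and finish with monotonicity and subadditivity of \(\sin\), which you only need on \([0,\pi/4]\) since all angles lie in \([0,\pi/2]\).
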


\begin{proof}
On \(\sphere\),
\begin{equation}\label{eq:chord-angle}
  D_p(A,B)=\sqrt{2-2\cos\vartheta_p(A,B)}
  =2\sin\frac{\vartheta_p(A,B)}2.
\end{equation}
Apply Theorem~\ref{thm:angular-triangle} and the elementary inequality
\(\sin\frac{u+v}{2}\le\sin\frac u2+\sin\frac v2\) for \(u,v\ge0\) with
\(u+v\le\pi\).
\end{proof}

\subsection{From the sphere to the cone}\label{sec:conic}

\begin{proof}[\textbf{Proof of Theorem~\ref{thm:main}}]
First assume \(A,B,C\ne0\).  Let \(r_A:=r_p(A)\), \(r_B:=r_p(B)\), and
\(r_C:=r_p(C)\).  Normalization as in \eqref{eq:normalization} gives the law of cosines
\begin{equation}\label{eq:cone-law-cosines}
  D_p(A,B)^2
  =r_A^2+r_B^2
  -2r_Ar_B\cos\vartheta_p(\what A,\what B).
\end{equation}
Put \(a:=\vartheta_p(\what A,\what B)\),
\(b:=\vartheta_p(\what B,\what C)\), and
\(c:=\vartheta_p(\what A,\what C)\).
By Theorem~\ref{thm:angular-triangle}, \(c\le a+b\).  Since all three angles lie
in \([0,\pi/2]\), we have \(a+b\le\pi\).

Choose planar vectors \(u_A:=r_A(1,0)\), \(u_B:=r_B(\cos a,\sin a)\), and
\(u_C:=r_C(\cos(a+b),\sin(a+b))\).  By \eqref{eq:cone-law-cosines},
\(\norm{u_A-u_B}=D_p(A,B)\) and \(\norm{u_B-u_C}=D_p(B,C)\).
Moreover, the Euclidean law-of-cosines expression is increasing in its angle on
\([0,\pi]\), so \(c\le a+b\) implies \(D_p(A,C)\le\norm{u_A-u_C}\).
The ordinary Euclidean triangle inequality therefore gives
\begin{align*}
  D_p(A,C)
  &\le\norm{u_A-u_C}\\
  &\le\norm{u_A-u_B}+\norm{u_B-u_C}\\
  &=D_p(A,B)+D_p(B,C).
\end{align*}
The cases involving the zero matrix follow by continuity, or directly from
\(D_p(A,0)=r_p(A)\).

Symmetry and nonnegativity were noted in \eqref{eq:affinity-definition} and
\eqref{eq:holder-affinity}.  Finally, if \(D_p(A,B)=0\), then
\eqref{eq:cone-law-cosines} forces \(r_A=r_B\) and equality in Schatten
H\"older in \eqref{eq:holder-affinity}.  The standard equality characterization
gives \(A^p\) proportional to \(B^p\); the equal radii make the proportionality
constant one, hence \(A=B\).
\end{proof}

\section{The LvD conjecture}\label{sec:lin-block-matrix}

To prove Theorem~\ref{thm:lin-block-norm}, we express the block-norm entries
as affinities and use the angular triangle inequality.  The conjecture of
\citet[Conjecture~5.1]{lin2014positive} was known at both endpoints:
\(p=1\) is a theorem of \citet{drury2014positive}, with a short proof in
\citet{li2015positivity}, and \(p=2\) follows from
\citet[Theorem~1]{marcus1971partitioned}, for any number of blocks.  For
scalar blocks, it follows from the fractional Hadamard power theorem of
\citet{fitzgerald1977fractional}.  The \(p=1\) statement had also been asserted
for matrices of fidelities by \citet{fannes2012matrices}, with an argument
that \citet{lin2014positive} found incomplete.

The \(p\)th powers in the conjecture are essential: \citet{li2015positivity}
give a positive semidefinite block matrix for which
\([\pnorm{H_{ij}}{p}]\) is indefinite for every \(p>1\).
For the powered entries, the following three-point criterion converts the
angular triangle inequality into positivity of a cosine matrix.

\begin{lemma}[Three-point cosine criterion]\label{lem:three-point-cosine-criterion}
Let \(c_{12},c_{13},c_{23}\in[0,1]\), and set
\[
  C:=
  \begin{bmatrix}
    1&c_{12}&c_{13}\\
    c_{12}&1&c_{23}\\
    c_{13}&c_{23}&1
  \end{bmatrix}.
\]
Define \(\alpha:=\arccos c_{12}\), \(\beta:=\arccos c_{13}\), and
\(\gamma:=\arccos c_{23}\).
Then \(C\succeq0\) if and only if
\begin{equation}\label{eq:three-point-angle-criterion}
  |\alpha-\beta|\le\gamma\le\alpha+\beta.
\end{equation}
\end{lemma}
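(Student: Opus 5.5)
The plan is to compute the determinant of \(C\) in terms of the three angles and factor it.

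\emph{Step 1: factor the determinant.} Expanding, and writing \(c_{12}=\cos\alpha\), \(c_{13}=\cos\beta\), \(c_{23}=\cos\gamma\),
\[
  \det C=1-\cos^2\alpha-\cos^2\beta-\cos^2\gamma+2\cos\alpha\cos\beta\cos\gamma .
\]
Regard this as a quadratic in \(x=\cos\gamma\):
\[
  -x^2+2x\cos\alpha\cos\beta+\sin^2\alpha\sin^2\beta-\cos^2\alpha\cos^2\beta+\cos^2\alpha\cos^2\beta .
\]
Its roots are \(x=\cos\alpha\cos\beta\pm\sin\alpha\sin\beta\), that is, \(\cos(\alpha\mp\beta)\). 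Hence
\[
  \det C=\bigl(\cos\gamma-\cos(\alpha+\beta)\bigr)\bigl(\cos(\alpha-\beta)-\cos\gamma\bigr).
\]

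\emph{Step 2: reduce positivity to the sign of the determinant.} By Sylvester's criterion in its semidefinite form, \(C\succeq0\) if and only if all principal minors are nonnegative. The diagonal entries equal \(1\). The \(2\times2\) principal minors are \(1-c_{ij}^2\ge0\), since \(c_{ij}\in[0,1]\). So \(C\succeq0\) if and only if \(\det C\ge0\).

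\emph{Step 3: translate the sign of the determinant into angles.} The angles satisfy \(\alpha,\beta,\gamma\in[0,\pi/2]\), so \(|\alpha-\beta|\in[0,\pi/2]\) and \(\alpha+\beta\in[0,\pi]\). Cosine is strictly decreasing on \([0,\pi]\), and \(\cos(\alpha-\beta)\ge\cos(\alpha+\beta)\).

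\begin{itemize}
\item Suppose \eqref{eq:three-point-angle-criterion} holds. Then both factors in Step 1 are nonnegative, so \(\det C\ge0\).
\item Conversely, suppose \(\det C\ge0\). If \(\cos(\alpha-\beta)>\cos(\alpha+\beta)\), then \(\cos\gamma\) lies between the two roots \(\cos(\alpha+\beta)\) and \(\cos(\alpha-\beta)\) (the quadratic in \(x\) has negative leading coefficient). Monotonicity then gives \(|\alpha-\beta|\le\gamma\le\alpha+\beta\).
\item In the degenerate case the two roots coincide, so \(\alpha\) or \(\beta\) is \(0\) or \(\pi/2\) appropriately. Then \(\det C=-(\cos\gamma-\text{root})^2\ge0\) forces \(\cos\gamma\) to equal the common root, and the criterion again holds.
\end{itemize}

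The main obstacle is only bookkeeping. One must handle the degenerate double-root cases and check that all the relevant angles lie in \([0,\pi]\), where cosine is injective. The factorization in Step 1 is the key identity.
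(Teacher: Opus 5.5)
Your proposal is correct and is essentially the paper's proof: you reduce $C\succeq0$ to $\det C\ge0$ via the principal minors, factor $\det C$ as a quadratic in $\cos\gamma$ with roots $\cos(\alpha\mp\beta)$, and conclude by monotonicity of cosine on $[0,\pi]$. Two small slips do not affect the argument. First, the constant term of your displayed quadratic should be $\sin^2\alpha\sin^2\beta-\cos^2\alpha\cos^2\beta$; the trailing $+\cos^2\alpha\cos^2\beta$ is spurious. Second, the roots coincide only when $\alpha=0$ or $\beta=0$, not at $\pi/2$, and the paper absorbs this case without separate treatment by reading ``between the two roots'' as a closed interval.
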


\begin{proof}
The \(1\times1\) and \(2\times2\) principal minors of \(C\) are nonnegative
because \(0\le c_{ij}\le1\).  Since a Hermitian matrix is positive semidefinite
exactly when all its principal minors are nonnegative, \(C\succeq0\) is
equivalent to \(\det C\ge0\).  Directly,
\begin{equation}\label{eq:lin-correlation-determinant}
  \det C
  =1+2c_{12}c_{13}c_{23}-c_{12}^2-c_{13}^2-c_{23}^2.
\end{equation}
Regarded as a quadratic polynomial in \(c_{23}\) with leading coefficient
\(-1\), its roots are
\(c_{12}c_{13}\pm\sqrt{(1-c_{12}^2)(1-c_{13}^2)}=\cos(\alpha\mp\beta)\).
Therefore
\begin{equation}\label{eq:lin-determinant-factorization}
  \det C
  =
  \bigl(\cos|\alpha-\beta|-\cos\gamma\bigr)
  \bigl(\cos\gamma-\cos(\alpha+\beta)\bigr).
\end{equation}
The determinant is nonnegative exactly when \(\cos\gamma\) lies between
the two roots.  Since \(0\le|\alpha-\beta|\le\alpha+\beta\le\pi\) and cosine
is decreasing on \([0,\pi]\), this is equivalent to
\eqref{eq:three-point-angle-criterion}.
\end{proof}

\begin{proof}[\textbf{Proof of Theorem~\ref{thm:lin-block-norm}}]
Choose a block Gram factorization \(\mathbf H=X^*X\), with
\(X=[\,X_1\ \ X_2\ \ X_3\,]\), for instance by taking
\(X=\mathbf H^{1/2}\) and splitting its columns conformally with the block
structure.  Then \(H_{ij}=X_i^*X_j\).
All matrices \(X_i\) have a common row dimension, although their column
dimensions may differ.  Define positive semidefinite matrices on this common
row space by \(A_i:=X_iX_i^*\) for \(i=1,2,3\).  The block-norm entries
are their affinities:
\begin{align}
  \pnorm{H_{ij}}{p}^p
  &=\trace\left|X_i^*X_j\right|^p\notag\\
  &=\trace\left(X_j^*X_iX_i^*X_j\right)^{p/2}\notag\\
  &=\trace\left(A_i^{1/2}A_jA_i^{1/2}\right)^{p/2}\notag\\
  &=\Phi_p(A_i,A_j).
  \label{eq:lin-entry-affinity-proof}
\end{align}
The third equality holds because, for \(M=A_i^{1/2}X_j\), the positive
matrices \(M^*M=X_j^*A_iX_j\) and
\(MM^*=A_i^{1/2}X_jX_j^*A_i^{1/2}=A_i^{1/2}A_jA_i^{1/2}\) have the same
nonzero eigenvalues, counted with multiplicity.

Set \(r_i:=r_p(A_i)=(\trace A_i^p)^{1/2}\).
If \(r_i=0\), then \(A_i=0\), hence \(X_i=0\), and the \(i\)th row and column
of the matrix in \eqref{eq:lin-block-norm-psd} vanish.  We may therefore remove
such indices and assume \(r_i>0\) for all \(i\).  Normalize as in
\eqref{eq:normalization}, setting \(\what A_i:=r_i^{-2/p}A_i\in\sphere\).
Radial homogeneity gives
\(\Phi_p(A_i,A_j)=r_ir_j\Phi_p(\what A_i,\what A_j)\).
Consequently,
\begin{equation}\label{eq:lin-diagonal-congruence}
  \bigl[\pnorm{H_{ij}}{p}^p\bigr]_{i,j=1}^3
  =DCD,
  \qquad
  D:=\diag(r_1,r_2,r_3),
\end{equation}
where \(C:=[\Phi_p(\what A_i,\what A_j)]_{i,j=1}^3\).
The diagonal entries are \(\Phi_p(\what A_i,\what A_i)=\trace\what A_i^p=1\),
and its off-diagonal entries \(c_{ij}:=\Phi_p(\what A_i,\what A_j)\) lie in
\([0,1]\) by \eqref{eq:holder-affinity}.

Define \(\theta_{ij}:=\vartheta_p(\what A_i,\what A_j)=\arccos c_{ij}\).
Applying Theorem~\ref{thm:angular-triangle} to all three orderings of the triple
gives \(|\theta_{12}-\theta_{13}|\le\theta_{23}\le\theta_{12}+\theta_{13}\).
Therefore Lemma~\ref{lem:three-point-cosine-criterion} yields \(C\succeq0\).
Finally, \eqref{eq:lin-diagonal-congruence} gives
\([\pnorm{H_{ij}}{p}^p]_{i,j=1}^3=DCD\succeq0\), as required.
\end{proof}

\begin{rmk}[Semidefinite versus definite conclusions]\label{rem:lin-semidefinite}
If \(\mathbf H\succ0\), then \([\pnorm{H_{ij}}{p}^p]\succ0\).
Indeed, choose \(\varepsilon>0\) such that \(\mathbf H-\varepsilon I\succeq0\)
and apply Theorem~\ref{thm:lin-block-norm} to this matrix.  Its off-diagonal
blocks are unchanged, while each diagonal entry of the scalar matrix is
strictly smaller.  Thus \([\pnorm{H_{ij}}{p}^p]\) is a positive semidefinite
matrix plus a strictly positive diagonal matrix.
\end{rmk}

The following singular configuration also attains equality in the angular
triangle inequality.

\begin{example}[A tight configuration, for every \(p\)]\label{ex:lin-tight}
Take \(n=2\), let \(u=(1,1)^{\mathsf T}/\sqrt2\) and
\(v=(1,-1)^{\mathsf T}/\sqrt2\), and set \(A_1:=uu^*\), \(A_2:=vv^*\),
and \(A_3:=2^{-1/p}I_2\).
All three lie on \(\sphere\), since \(\trace A_1^p=\trace A_2^p=1\) and
\(\trace A_3^p=2\cdot2^{-1}=1\).  For a rank-one \(A=ww^*\) with
\(\norm w=1\) and any \(B\succeq0\) one has \(A^{1/2}=A\) and
\(ABA=(w^*Bw)ww^*\), so \(\Phi_p(A,B)=(w^*Bw)^{p/2}\).  Hence
\[
  \Phi_p(A_1,A_2)=|\ip{u}{v}|^p=0,
  \qquad
  \Phi_p(A_1,A_3)=\Phi_p(A_2,A_3)=\bigl(2^{-1/p}\bigr)^{p/2}
  =\tfrac1{\sqrt2},
\]
that is, \(\vartheta_p(A_1,A_2)=\pi/2\) and
\(\vartheta_p(A_1,A_3)=\vartheta_p(A_2,A_3)=\pi/4\).  The angular triangle
inequality \eqref{eq:angular-triangle} therefore holds with \emph{equality},
for every \(1\le p\le2\); correspondingly the cosine matrix \(C\) has
\(\det C=0\), so the matrix
\eqref{eq:lin-block-norm-psd} built from \(H_{ij}=A_i^{1/2}A_j^{1/2}\) is
positive semidefinite and singular.
\end{example}

\begin{rmk}[The range of \(p\)]\label{rem:lin-sharp-in-p}
For \(n\ge2\), the triangle inequality for \(D_p\) fails for every \(p>2\)
\citep[Theorem~1.3(ii)]{vuong2026three}.  The block-positivity statement also
fails in this range: applied to \(H_{ij}=A_i^{1/2}A_j^{1/2}\), it would make
\([\Phi_p(A_i,A_j)]_{i,j=1}^3\) a Gram matrix, so the distances \(D_p(A_i,A_j)\)
would be Euclidean distances and would satisfy the triangle inequality.
\end{rmk}

\section{Barycenters}\label{sec:barycenters}

To study Fr\'echet means for \(D_p\), fix \(A_1,\dots,A_m\in\pp_n\), not all
zero, and weights in \(\simplex:=\{w\in\reals^m:w_i\ge0,\ \sum_iw_i=1\}\).
The \emph{Fr\'echet functional} is
\(\Fc_w(X):=\sum_{i=1}^mw_iD_p(X,A_i)^2\) for \(X\in\pp_n\); any minimizer
is a \emph{barycenter} of \((A_i,w_i)\) for \(D_p\).
Expanding \eqref{eq:distance-definition},
\begin{equation}\label{eq:frechet-expansion}
  \Fc_w(X)=\trace X^p-2\Psi_w(X)+\sum_{i=1}^mw_i\trace A_i^p,
  \qquad
  \Psi_w(X):=\sum_{i=1}^mw_i\Phi_p(X,A_i).
\end{equation}
The affinity sum \(\Psi_w\) is positively homogeneous of degree \(t=p/2\),
while \(\trace X^p\) is homogeneous of degree \(p\).
For \(m=2\) and \(w=(a,b)\), \(\Psi_w\) is the objective in
Proposition~\ref{prop:support}.

\subsection{Convexity, existence, uniqueness}\label{sec:convexity}

\begin{prop}[Convexity]\label{prop:convexity}
Let \(1\le p\le2\).
\begin{enumerate}[label=\textup{(\roman*)},leftmargin=2.2em]
\item For every \(A\in\pp_n\), the map \(X\mapsto\Phi_p(X,A)\) is concave and
nondecreasing on \(\pp_n\).
\item The map \(X\mapsto\trace X^p\) is convex on \(\pp_n\), and strictly convex when
\(p>1\).
\item For every \(w\in\simplex\), \(\Fc_w\) is convex on \(\pp_n\), and
strictly convex when \(p>1\).
\end{enumerate}
\end{prop}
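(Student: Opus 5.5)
Parts (ii) and (iii) follow from (i) by standard facts, so the work is in (i).

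\emph{Part (i).} Write \(\Phi_p(X,A)=\trace(A^{1/2}XA^{1/2})^t\) with \(t=p/2\in[\tfrac12,1]\).

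\emph{Concavity.} The map \(X\mapsto A^{1/2}XA^{1/2}\) is linear and maps \(\pp_n\) into \(\pp_n\). The map \(M\mapsto\trace M^t\) is concave on \(\pp_n\) for \(0<t\le1\), because \(s\mapsto s^t\) is concave and trace functions of concave scalar functions are concave (the Peierls/von Neumann argument). Precomposing a concave function with a linear map preserves concavity.

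\emph{Monotonicity.} If \(X\le X'\), then \(A^{1/2}XA^{1/2}\le A^{1/2}X'A^{1/2}\). Weyl monotonicity gives \(\lambda_j\) nondecreasing for every \(j\). Since \(s\mapsto s^t\) is increasing, \(\trace M^t\) is nondecreasing.

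\emph{Part (ii).} \(X\mapsto\trace X^p\) is the trace function of the scalar map \(s\mapsto s^p\), which is convex on \([0,\infty)\) for \(p\ge1\). Hence it is convex on \(\pp_n\).

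\emph{Strict convexity for \(p>1\).} This is the one point needing care. I would use that \(\trace X^p=\pnorm{X}{p}^p\) on \(\pp_n\). The space \(S_p\) is uniformly convex for \(1<p<\infty\), and \(u\mapsto u^p\) is strictly convex and increasing. So for \(X\ne Y\) in \(\pp_n\), \(\pnorm{\tfrac{X+Y}{2}}{p}^p<\tfrac12(\pnorm{X}{p}^p+\pnorm{Y}{p}^p)\), as follows.
\begin{itemize}
\item If \(\pnorm{X}{p}\ne\pnorm{Y}{p}\): the triangle inequality \(\pnorm{\tfrac{X+Y}{2}}{p}\le\tfrac12(\pnorm{X}{p}+\pnorm{Y}{p})\) combined with strict convexity of \(u^p\) gives strict inequality.
\item If \(\pnorm{X}{p}=\pnorm{Y}{p}\): strict convexity of the norm gives \(\pnorm{\tfrac{X+Y}{2}}{p}<\pnorm{X}{p}\), hence the strict inequality.
\end{itemize}
Alternatively, a spectral argument works: equality in Peierls' inequality for strictly convex \(f\) forces \(X=Y\).

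\emph{Part (iii).} Use the expansion \eqref{eq:frechet-expansion}:
\[
\Fc_w(X)=\trace X^p-2\Psi_w(X)+\text{const}.
\]
Here \(\Psi_w\) is a nonnegative combination of functions that are concave by (i), so \(-2\Psi_w\) is convex. The term \(\trace X^p\) is convex, and strictly convex for \(p>1\), by (ii). A sum of a convex function and a strictly convex function is strictly convex. This gives (iii).

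I expect the main obstacle to be the strict convexity in (ii). It relies on an external fact: either uniform convexity of Schatten classes or the equality case of the trace-function convexity inequality.
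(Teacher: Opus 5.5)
Your proposal is correct and follows the paper's proof essentially step for step: (i) composes the linear congruence $X\mapsto A^{1/2}XA^{1/2}$ with the concave, nondecreasing trace function $M\mapsto\trace M^t$; (ii) proves strictness by the same two cases, namely strict convexity of $u\mapsto u^p$ when the norms differ and uniform convexity of $S_p$ when they agree; and (iii) writes $\mathcal{F}_w$ as a sum of convex terms, with strictness inherited from $\trace X^p$. The differences are cosmetic. You obtain plain convexity in (ii) from Peierls rather than from the norm, and you check strictness only at midpoints, which suffices for a convex function (your two cases also work verbatim for any $\lambda\in(0,1)$).
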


\begin{proof}
(i)  By \eqref{eq:affinity-definition} and the coincidence of nonzero spectra,
\(\Phi_p(X,A)=\trace(A^{1/2}XA^{1/2})^t\) with \(t=p/2\in[\half,1]\).  The map
\(X\mapsto A^{1/2}XA^{1/2}\) is linear and maps \(\pp_n\) into \(\pp_n\), and
\(M\mapsto\trace M^t\) is concave and nondecreasing on \(\pp_n\) for
\(t\in[0,1]\), since \(\tau\mapsto\tau^t\) is concave and nondecreasing on
\([0,\infty)\) \citep{bhatia1997matrix,carlen2010trace}.
Composing these maps proves (i).

(ii)  On \(\pp_n\) we have \(\trace X^p=\pnorm{X}{p}^p\), and \(\pnorm\cdot p\)
is a norm for \(p\ge1\), so convexity follows from convexity and monotonicity
of \(\tau\mapsto\tau^p\) on \([0,\infty)\).  For strictness let \(p>1\), let
\(X\ne Y\) in \(\pp_n\), let \(\lambda\in(0,1)\), and set
\(Z=\lambda X+(1-\lambda)Y\).  If \(\pnorm Xp\ne\pnorm Yp\), then
\[
  \pnorm Zp^p
  \le\bigl(\lambda\pnorm Xp+(1-\lambda)\pnorm Yp\bigr)^p
  <\lambda\pnorm Xp^p+(1-\lambda)\pnorm Yp^p
\]
by strict convexity of \(\tau\mapsto\tau^p\).  If
\(\pnorm Xp=\pnorm Yp=:\rho\), then \(\rho>0\) because \(X\ne Y\), and the
Schatten class \(S_p\) is uniformly, hence strictly, convex for
\(1<p<\infty\) \citep{mccarthy1967cp,simon2005trace}; therefore
\(\pnorm Zp<\rho\) and \(\pnorm Zp^p<\rho^p\).

(iii)  By \eqref{eq:frechet-expansion}, \(\Fc_w\) is the sum of the convex
functions \(\trace X^p\) and \(-2\Psi_w\), and a constant.
Strict convexity for \(p>1\) comes from the first summand.
\end{proof}

\begin{theorem}[Existence and uniqueness]\label{thm:existence}
Let \(1\le p\le2\) and \(w\in\simplex\).  Then \(\Fc_w\) has a nonempty
compact convex set of minimizers on \(\pp_n\), and the minimizer is unique
if \(1<p\le2\).
\end{theorem}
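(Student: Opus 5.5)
The argument is the direct method: continuity, coercivity, and the convexity of Proposition~\ref{prop:convexity}. First I will check that \(\Fc_w\) is continuous on the closed cone \(\pp_n\). The map \(X\mapsto\trace X^p\) is continuous. Each \(\Phi_p(X,A_i)=\trace(A_i^{1/2}XA_i^{1/2})^t\) is the composition of a linear map with \(M\mapsto\trace M^t\), which is continuous on \(\pp_n\) for \(t>0\) because eigenvalues depend continuously on the matrix. So \(\Fc_w\) is continuous, and the minimization takes place over a closed set.

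Next I will prove coercivity using the Hölder bound \eqref{eq:holder-affinity}. It gives \(\Psi_w(X)\le r_p(X)\sum_iw_ir_p(A_i)=:K\,r_p(X)\). Then \eqref{eq:frechet-expansion} yields
\[
  \Fc_w(X)\ge r_p(X)^2-2K\,r_p(X)+\sum_iw_i\trace A_i^p,
\]
which tends to infinity as \(r_p(X)=\pnorm{X}{p}^{p/2}\to\infty\). Since all norms on Hermitian matrices are equivalent, every sublevel set \(\{X\in\pp_n:\Fc_w(X)\le\Fc_w(0)\}\) is bounded. It is also closed by continuity, so it is compact and nonempty (it contains \(0\)). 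A continuous function attains its infimum on a nonempty compact set, so a minimizer exists.

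The set of minimizers equals \(\{X\in\pp_n:\Fc_w(X)\le\min\Fc_w\}\). This is a closed subset of the compact sublevel set above, so it is compact. It is convex because \(\Fc_w\) is convex on the convex cone \(\pp_n\) by Proposition~\ref{prop:convexity}(iii).

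For \(1<p\le2\), uniqueness follows from strict convexity. Suppose \(X\ne Y\) are both minimizers. Then \(\Fc_w(\tfrac12X+\tfrac12Y)<\tfrac12\Fc_w(X)+\tfrac12\Fc_w(Y)=\min\Fc_w\), which is a contradiction.

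I do not expect a real obstacle here. The only point needing care is continuity of \(\Phi_p(\cdot,A_i)\) up to the boundary of the cone, where \(A_i\) or \(X\) may be singular. Writing \(\Phi_p\) as the trace of a power of \(A_i^{1/2}XA_i^{1/2}\) handles this, because the exponent \(t=p/2\) is positive and the power is continuous on \(\pp_n\).
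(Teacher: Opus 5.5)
Your proposal is correct and follows the paper's proof essentially step for step. Coercivity comes from the H\"older bound \eqref{eq:holder-affinity}: your lower bound \(r_p(X)^2-2K\,r_p(X)+\sum_iw_i\trace A_i^p\) is just the paper's \(\sum_iw_i\bigl(r_p(X)-r_p(A_i)\bigr)^2\) expanded. Compactness comes from continuity, and convexity and uniqueness from Proposition~\ref{prop:convexity}; your explicit check of continuity up to the boundary of the cone is a detail the paper leaves implicit.
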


\begin{proof}
By \eqref{eq:holder-affinity}, \(D_p(X,A_i)^2\ge(r_p(X)-r_p(A_i))^2\), so
\(\Fc_w(X)\to\infty\) as \(r_p(X)\to\infty\).  By continuity on the closed
cone \(\pp_n\), its sublevel sets are compact, so it attains its minimum.
The minimizer set is compact; by Proposition~\ref{prop:convexity}, it is
convex and reduces to a point for \(p>1\).
\end{proof}

At \(p=1\) the term \(\trace X\) is linear, so
Proposition~\ref{prop:convexity}(iii) gives convexity but not strict convexity.
Nevertheless, the barycenter is unique whenever some \(A_i\) with \(w_i>0\)
is nonsingular \citep{agueh2011barycenters,alvarez2016fixed}.
For the Euler characterization, we next show that positive definite data
give interior minimizers.

\begin{lemma}[Positivity of the barycenter]\label{lem:positive-barycenter}
Let \(1\le p\le2\), \(w\in\simplex\) with all \(w_i>0\), and suppose
\(A_i\succ0\) for every \(i\).  Then every barycenter is positive definite.
\end{lemma}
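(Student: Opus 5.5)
The argument follows Lemma~\ref{lem:positive-maximizer}: perturb a singular candidate along its kernel and show that the Fr\'echet functional strictly decreases.

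Let \(X\) be a barycenter and suppose it is singular. Let \(E\) be the orthogonal projection onto \(\ker X\), put \(k:=\rank E>0\), and set \(X_\varepsilon:=X+\varepsilon E\) for small \(\varepsilon>0\). Using the expansion \eqref{eq:frechet-expansion}, we compare the two nonconstant parts of \(\Fc_w\) separately.

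\emph{The power term.} Since \(E\) projects onto the kernel, \(X\) and \(\varepsilon E\) have orthogonal supports. Hence \(\trace X_\varepsilon^p=\trace X^p+k\varepsilon^p\), so this term increases by exactly \(k\varepsilon^p\).

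\emph{The affinity term.} For each \(i\) set \(M_i:=A_i^{1/2}XA_i^{1/2}\) and \(M_{i,\varepsilon}:=A_i^{1/2}X_\varepsilon A_i^{1/2}\). Because \(A_i\succ0\), \(\rank M_i=\rank X=n-k\). The argument of \eqref{eq:kernel-gain} then applies verbatim:
\begin{itemize}
\item eigenvalue monotonicity gives \(\lambda_j(M_{i,\varepsilon})\ge\lambda_j(M_i)\) for all \(j\);
\item for \(\varepsilon<m_0\), the smallest positive eigenvalue of \(X\), we have \(X_\varepsilon\ge\varepsilon I\), hence \(M_{i,\varepsilon}\ge\varepsilon\lambda_{\min}(A_i)I\);
\item so the \(k\) eigenvalues of \(M_{i,\varepsilon}\) corresponding to the zero eigenvalues of \(M_i\) are at least \(\varepsilon\lambda_{\min}(A_i)\).
\end{itemize}
This gives \(\Phi_p(X_\varepsilon,A_i)\ge\Phi_p(X,A_i)+k(\varepsilon\lambda_{\min}(A_i))^t\). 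Weighting with \(w_i>0\) and summing,
\[
  \Fc_w(X_\varepsilon)\le\Fc_w(X)+k\varepsilon^p-2c\,\varepsilon^t,
  \qquad c:=k\sum_iw_i\lambda_{\min}(A_i)^t>0 .
\]

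\emph{Conclusion.} The constant \(c\) is strictly positive because every \(w_i>0\) and every \(A_i\succ0\). For \(1<p\le2\) we have \(t=p/2<p\), so \(2c\varepsilon^t\) dominates \(k\varepsilon^p\) as \(\varepsilon\downarrow0\). Thus \(\Fc_w(X_\varepsilon)<\Fc_w(X)\), contradicting minimality. No normalization is needed here, unlike in Lemma~\ref{lem:positive-maximizer}, because the constraint is absent and the power term is computed exactly.

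\emph{The case \(p=1\).} This is the main obstacle. Here \(t=1/2<1=p\), so the exponent comparison still works: the loss is \(k\varepsilon\) and the gain is \(2c\,\varepsilon^{1/2}\). The inequality \(t<p\) holds throughout \([1,2]\), so the same argument covers \(p=1\) with no separate treatment.

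The one step that deserves care is the eigenvalue-counting claim. Write the eigenvalues in decreasing order. The top \(n-k\) eigenvalues of \(M_{i,\varepsilon}\) dominate those of \(M_i\) by monotonicity. The bottom \(k\) eigenvalues of \(M_{i,\varepsilon}\) are at least \(\varepsilon\lambda_{\min}(A_i)\), while the corresponding eigenvalues of \(M_i\) are \(0\). Summing \(\tau\mapsto\tau^t\) over both groups yields the stated gain. Since every barycenter was arbitrary, every barycenter is positive definite.
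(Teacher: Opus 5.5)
Your proposal is correct and follows the paper's proof essentially line for line: you perturb along the kernel by \(\varepsilon E\), note that \(\trace X^p\) grows by exactly \(k\varepsilon^p\) while each affinity gains at least \(k(\varepsilon\lambda_{\min}(A_i))^t\) as in \eqref{eq:kernel-gain}, and conclude from \(t=p/2<p\), which covers \(p=1\) with no separate treatment. The only detail the paper adds is the convention \(m_0:=1\) when \(X=0\); there your ``smallest positive eigenvalue'' is undefined, but \(X_\varepsilon=\varepsilon I\) makes the bound immediate.
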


\begin{proof}
Suppose \(X\in\pp_n\) is singular, let \(E\) be the orthogonal projection onto
\(\ker X\), \(k=\rank E>0\), and let \(m_0>0\) be the smallest positive
eigenvalue of \(X\) (take \(m_0=1\) if \(X=0\)).  For \(0<\varepsilon<m_0\) put
\(X_\varepsilon:=X+\varepsilon E\), so that
\(X_\varepsilon\succeq\varepsilon I\).  Since \(E\) is supported on
\(\ker X\), we have \(\trace X_\varepsilon^p=\trace X^p+k\varepsilon^p\).
As in \eqref{eq:kernel-gain}, each affinity satisfies
\(\Phi_p(X_\varepsilon,A_i)\ge\Phi_p(X,A_i)
+k(\varepsilon\lambda_{\min}(A_i))^t\).
Hence, by \eqref{eq:frechet-expansion},
\[
  \Fc_w(X_\varepsilon)-\Fc_w(X)
  \le k\varepsilon^p-2k\varepsilon^{t}\sum_iw_i\lambda_{\min}(A_i)^t .
\]
Since \(t=p/2<p\), this is negative for sufficiently small \(\varepsilon>0\),
so \(X\) cannot minimize \(\Fc_w\).
\end{proof}

\subsection{The Euler equation}\label{sec:euler}

\begin{proof}[\textbf{Proof of Theorem~\ref{thm:euler}}]
By Lemma~\ref{lem:differential}, each \(\Phi_p(\cdot,A_i)\) is real analytic
near \(X\succ0\), while \(X\mapsto\trace X^p\) has differential
\(H\mapsto p\,\trace[X^{p-1}H]\).  Thus \eqref{eq:frechet-expansion}, with
\(2t=p\), gives
\begin{equation}\label{eq:frechet-gradient}
  \nabla\Fc_w(X)
  =p\left(X^{p-1}-X^{-1/2}
  \Bigl(\sum_iw_i(X^{1/2}A_iX^{1/2})^{p/2}\Bigr)X^{-1/2}\right).
\end{equation}
Conjugating by \(X^{1/2}\) shows that \(\nabla\Fc_w(X)=0\) is equivalent to
\eqref{eq:euler}.

A positive definite \(X\) is interior to \(\pp_n\) in the real vector space
of Hermitian matrices, so any such minimizer has vanishing gradient.
Conversely, convexity (Proposition~\ref{prop:convexity}(iii)) makes every
stationary point a global minimizer.
\end{proof}

\subsection{Convergence of the fixed-point iteration}\label{sec:convergence}

Equation \eqref{eq:euler} suggests the iteration
\begin{equation}\label{eq:fixed-point-map}
  \Tc(X):=X^{-1/2}
  \Bigl(\sum_{i=1}^mw_i\bigl(X^{1/2}A_iX^{1/2}\bigr)^{p/2}\Bigr)^{2/p}
  X^{-1/2},
\end{equation}
whose fixed points in the positive definite cone are exactly the barycenters
there, by Theorem~\ref{thm:euler}.  At
\(p=2\) the map is constant, \(\Tc(X)=\sum_iw_iA_i\), so it converges in one
step.  For real symmetric data at \(p=1\), it is the Gaussian Wasserstein
barycenter iteration of \citet{alvarez2016fixed}; linear convergence rates
were established by \citet{chewi2020gradient}.

For the full range \(1\le p\le2\), we combine sufficient decrease from the
partition inequality (Lemma~\ref{lem:sufficient-decrease}),
confinement of the iterates to a spectral box
(Lemma~\ref{lem:spectral-confinement}), and an error bound that converts the
step length into control of the optimality gap
(Lemma~\ref{lem:residual-error-bound}).  Appendix~\ref{app:convergence}
supplies the confinement and error-bound proofs, including \(p=1\);
we prove sufficient decrease below.

Throughout the rest of this subsection, \(A_1,\dots,A_m\succ0\) and
\(w\in\simplex\) has all \(w_i>0\).  We abbreviate the right side of the
Euler equation \eqref{eq:euler} by
\begin{equation}\label{eq:balance-map}
  S(X):=\sum_{i=1}^mw_i\bigl(X^{1/2}A_iX^{1/2}\bigr)^{p/2},
  \qquad\text{so that}\qquad
  \Tc(X)=X^{-1/2}S(X)^{1/t}X^{-1/2}.
\end{equation}
Since \(S(X)\succ0\) whenever \(X\succ0\), the iteration is well defined:
\begin{equation}\label{eq:iteration}
  X_{k+1}:=\Tc(X_k),
  \qquad
  X_0\succ0.
\end{equation}

\begin{lemma}[Sufficient decrease]\label{lem:sufficient-decrease}
For every \(X\succ0\),
\begin{equation}\label{eq:sufficient-decrease}
  \boxed{
  \Fc_w\bigl(\Tc(X)\bigr)+D_p\bigl(X,\Tc(X)\bigr)^2\ \le\ \Fc_w(X).
  }
\end{equation}
In particular \(\Fc_w\) is nonincreasing along \eqref{eq:iteration}, strictly so
unless \(\Tc(X)=X\).
\end{lemma}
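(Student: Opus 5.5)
The plan is to expand both sides of \eqref{eq:sufficient-decrease} using \eqref{eq:frechet-expansion} and \eqref{eq:distance-definition}, and so reduce the claim to a single inequality $\Psi_w(Y)\ge\trace Y^p$ for $Y:=\Tc(X)$. That inequality should be one application of the partition inequality, Theorem~\ref{thm:partition}, with the pieces $H_j$ taken from the summands of $S(X)$.

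\emph{Reduction.} Write $Y=\Tc(X)$ and $M_i:=X^{1/2}A_iX^{1/2}$, so that $S=S(X)=\sum_iw_iM_i^t$. By \eqref{eq:balance-map}, $X^{1/2}YX^{1/2}=S^{1/t}$, and therefore
\[
  \Phi_p(X,Y)=\trace\bigl(S^{1/t}\bigr)^t=\trace S=\Psi_w(X).
\]
Substituting into \eqref{eq:frechet-expansion} and \eqref{eq:distance-definition}, the inequality $\Fc_w(Y)+D_p(X,Y)^2\le\Fc_w(X)$ becomes
\[
  2\trace Y^p+2\Psi_w(X)\le 2\Psi_w(Y)+2\Phi_p(X,Y).
\]
The $\Psi_w(X)$ terms cancel, so it remains to prove $\trace Y^p\le\Psi_w(Y)=\sum_iw_i\Phi_p(Y,A_i)$.

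\emph{Partition step.} Apply Theorem~\ref{thm:partition} with $H_j:=w_jM_j^t\succ0$ and $H=S\succ0$. Since $t/p=1/2$, we have $H_j^{1/p}=w_j^{1/p}M_j^{1/2}$, and by \eqref{eq:balance-map} also $H^{-1/p}=S^{-1/p}$.

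Set $W:=S^{1/p}X^{-1/2}$. Then $W^*W=X^{-1/2}S^{2/p}X^{-1/2}=Y$. Choose the test matrix
\[
  Z:=WW^*=S^{1/p}X^{-1}S^{1/p},
\]
so that $\pnorm{Z}{p}^p=\trace(WW^*)^p=\trace Y^p$.

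The $j$th term of the partition sum is $H_j^{1/p}H^{-1/p}Z=w_j^{1/p}M_j^{1/2}X^{-1}S^{1/p}$. Its $p$th Schatten power is $w_j\trace(N_j)^{t}$, where
\[
  N_j:=S^{1/p}X^{-1}M_jX^{-1}S^{1/p}.
\]
Because $X^{-1}M_jX^{-1}=X^{-1/2}A_jX^{-1/2}$, we get $N_j=WA_jW^*$. This has the same nonzero spectrum as $A_j^{1/2}W^*WA_j^{1/2}=A_j^{1/2}YA_j^{1/2}$, so $\trace N_j^t=\Phi_p(Y,A_j)$. Summing over $j$, Theorem~\ref{thm:partition} gives $\Psi_w(Y)\ge\trace Y^p$, which proves \eqref{eq:sufficient-decrease}.

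\emph{Monotonicity and the main obstacle.} Monotonicity along \eqref{eq:iteration} is immediate. Strict decrease also follows: if $\Tc(X)\ne X$, then $D_p(X,\Tc(X))>0$, because $D_p$ is a metric (Theorem~\ref{thm:main}). The only real difficulty is choosing the test matrix $Z$ in the partition inequality so that its norm is exactly $\trace Y^p$ and each partition term collapses to an affinity $\Phi_p(Y,A_j)$. The choice $Z=WW^*$ does this, using the identities $t/p=1/2$ and $X^{1/2}YX^{1/2}=S^{1/t}$. After that, the argument is spectral bookkeeping of the same kind as in Proposition~\ref{prop:support}. The endpoint $p=1$ needs no separate treatment, since Theorem~\ref{thm:partition} covers it.
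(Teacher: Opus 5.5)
Your proposal is correct and is essentially the paper's proof. Write \(K_i=M_i^{1/2}\) and \(K=S^{1/p}\): then your pieces \(H_j=w_jM_j^t\), your test matrix \(Z=S^{1/p}X^{-1}S^{1/p}\), and your cancellation via \(\Phi_p(X,\Tc(X))=\Psi_w(X)\) are exactly the paper's \(w_iK_i^p\), \(KX^{-1}K\), and \eqref{eq:phi-X-T}; the only cosmetic difference is that you verify the spectral identities through the factor \(W=S^{1/p}X^{-1/2}\) rather than by similarity.
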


\begin{proof}
Put \(K_i:=(X^{1/2}A_iX^{1/2})^{1/2}\) and
\(K:=(\sum_{i=1}^mw_iK_i^p)^{1/p}\), so that \(K^p=S(X)\) and
\(\Tc(X)=X^{-1/2}K^2X^{-1/2}\).  Since
\(X^{1/2}\Tc(X)X^{1/2}=K^2\), we have the exact identity
\begin{equation}\label{eq:phi-X-T}
  \Phi_p\bigl(X,\Tc(X)\bigr)
  =\trace K^p
  =\sum_{i=1}^mw_i\Phi_p(X,A_i).
\end{equation}
This cancels the affinity terms in the decrease estimate.  For the remaining
terms, use
\begin{equation}\label{eq:phi-T-Ai}
  \Phi_p\bigl(\Tc(X),A_i\bigr)=\pnorm{K_iX^{-1}K}{p}^p,
  \qquad
  \trace\Tc(X)^p=\pnorm{KX^{-1}K}{p}^p.
\end{equation}
For the first, \(A_i=X^{-1/2}K_i^2X^{-1/2}\), so \(\Tc(X)A_i\) is similar to
\(K^2X^{-1}K_i^2X^{-1}\), which has the same nonzero spectrum as
\(KX^{-1}K_i^2X^{-1}K=(K_iX^{-1}K)^*(K_iX^{-1}K)\);
taking the \((p/2)\)th power trace gives the claim.  For the second,
\(\Tc(X)=(X^{-1/2}K)(KX^{-1/2})\) and \(KX^{-1}K=(KX^{-1/2})(X^{-1/2}K)\) have
the same nonzero spectrum, and both are positive semidefinite.

Apply Theorem~\ref{thm:partition} with \(H_i:=w_iK_i^p\),
\(H:=\sum_iH_i=K^p\), and \(KX^{-1}K\) as the free argument.  Here
\(H_i^{1/p}=w_i^{1/p}K_i\) and \(H^{-1/p}=K^{-1}\), so \eqref{eq:partition-intro}
reads
\begin{equation}\label{eq:key-partition-bary}
  \sum_{i=1}^mw_i\pnorm{K_iX^{-1}K}{p}^p\ \ge\ \pnorm{KX^{-1}K}{p}^p,
\end{equation}
which by \eqref{eq:phi-T-Ai} is exactly
\begin{equation}\label{eq:key-affinity}
  \sum_{i=1}^mw_i\Phi_p\bigl(\Tc(X),A_i\bigr)\ \ge\ \trace\Tc(X)^p.
\end{equation}
Expanding by \eqref{eq:distance-definition} and cancelling with
\eqref{eq:phi-X-T} gives
\begin{equation}\label{eq:exact-descent-remainder}
  \Fc_w(X)-\Fc_w\bigl(\Tc(X)\bigr)-D_p\bigl(X,\Tc(X)\bigr)^2
  =2\left[\sum_{i=1}^mw_i\Phi_p\bigl(\Tc(X),A_i\bigr)-\trace\Tc(X)^p\right],
\end{equation}
whose right side is nonnegative by \eqref{eq:key-affinity}.  If
\(\Tc(X)\ne X\) then \(D_p(X,\Tc(X))>0\), because \(D_p\) is a metric
(Theorem~\ref{thm:main}), so the decrease is strict.
\end{proof}

\begin{rmk}[What the proof uses]\label{rem:descent-hypotheses}
The proof also applies to semidefinite \(A_i\) provided \(X\succ0\) and
\(S(X)\succ0\).
By \eqref{eq:exact-descent-remainder}, the remainder in the decrease estimate
is twice the surplus in the partition inequality
\eqref{eq:key-partition-bary}.  At \(p=2\) both inequalities hold with equality.
\end{rmk}

Lemma~\ref{lem:sufficient-decrease} gives convergence of the objective
values.  To obtain a rate, we first bound the spectra of the iterates
away from zero and infinity.

\begin{lemma}[Uniform spectral confinement]\label{lem:spectral-confinement}
Fix \(X_0\succ0\), let \(F_0:=\Fc_w(X_0)\), and put
\begin{align}
  \bar r&:=\sum_{i=1}^mw_ir_p(A_i), &
  \lambda_+&:=\bigl(\bar r+\sqrt{F_0}\bigr)^{2/p},
  \label{eq:box-upper}\\
  c_A&:=\sum_{i=1}^mw_i\lambda_{\min}(A_i)^t, &
  \lambda_-&:=\frac{c_A^{\,n/t}}{\lambda_+^{\,n-1}}.
  \label{eq:box-lower}
\end{align}
Then every iterate of \eqref{eq:iteration} with \(k\ge1\), and every barycenter
\(X_\star\), lies in the spectral box
\(\Bc:=\{X\in\pp_n:\lambda_-I\preceq X\preceq\lambda_+I\}\).
\end{lemma}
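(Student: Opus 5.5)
Two bounds are needed: the upper bound $X\preceq\lambda_+ I$ and the lower bound $X\succeq\lambda_- I$. The plan is to prove each for an arbitrary $X\succ0$ with $\Fc_w(X)\le F_0$ that is either an iterate $X_k$ with $k\ge1$ or a barycenter $X_\star$.

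\textbf{Upper bound.} By Lemma~\ref{lem:sufficient-decrease}, $\Fc_w(X_k)\le F_0$ for all $k$, and $\Fc_w(X_\star)\le\Fc_w(X_0)=F_0$ by minimality. Fix such an $X$. Since $D_p$ is a metric (Theorem~\ref{thm:main}) with $D_p(X,0)=r_p(X)$, we get $r_p(X)\le r_p(A_i)+D_p(X,A_i)$. Averaging with weights $w_i$ and applying Cauchy--Schwarz gives
\[
r_p(X)\le\bar r+\sum_i w_iD_p(X,A_i)\le\bar r+\sqrt{\Fc_w(X)}\le\bar r+\sqrt{F_0}.
\]
Then $\lambda_{\max}(X)^p\le\trace X^p=r_p(X)^2$, so $\lambda_{\max}(X)\le(\bar r+\sqrt{F_0})^{2/p}=\lambda_+$. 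This holds for the barycenter and for all iterates, including $X_0$; the restriction $k\ge1$ only matters for the lower bound.

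\textbf{Lower bound.} The plan is to bound $\det X$ from below and combine it with the upper bound: if $\det X\ge c_A^{n/t}$, then
\[
\lambda_{\min}(X)\ge\det X/\lambda_{\max}(X)^{n-1}\ge c_A^{n/t}/\lambda_+^{n-1}=\lambda_-.
\]
\emph{Iterates.} For $X=\Tc(Y)$ with $Y\succ0$, \eqref{eq:balance-map} gives $\det X=\det S(Y)^{1/t}/\det Y$. Since $X^{1/2}A_iX^{1/2}\succeq\lambda_{\min}(A_i)X$, operator monotonicity of $\tau\mapsto\tau^t$ yields $(Y^{1/2}A_iY^{1/2})^t\succeq\lambda_{\min}(A_i)^tY^t$. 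Hence $S(Y)\succeq c_AY^t$, and therefore $\det S(Y)\ge c_A^n(\det Y)^t$. It follows that $\det\Tc(Y)\ge c_A^{n/t}$, which is exactly the required bound; this is why $k\ge1$ is needed.

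\emph{Barycenters.} Lemma~\ref{lem:positive-barycenter} gives $X_\star\succ0$, so Theorem~\ref{thm:euler} applies: $X_\star$ solves \eqref{eq:euler}, i.e.\ $X_\star=\Tc(X_\star)$. The determinant computation above then gives $\det X_\star\ge c_A^{n/t}$ in the same way.

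The main obstacle is finding a lower bound valid uniformly along the iteration. A naive eigenvalue argument does not work because $\Tc$ conjugates by $X^{-1/2}$, which mixes the spectrum. The determinant identity sidesteps this, since determinants are multiplicative and $\det(X^{-1/2})^2=1/\det X$ cancels exactly against $(\det Y)^{t\cdot(1/t)}$. The only delicate input is the operator-monotone step $(Y^{1/2}A_iY^{1/2})^t\succeq\lambda_{\min}(A_i)^tY^t$, which is valid because $t\in[\tfrac12,1]$.
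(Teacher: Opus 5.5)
Your proposal is correct and matches the paper's proof essentially step for step. It uses the same sublevel-set radius bound for $\lambda_{\max}$, and the same L\"owner--Heinz/determinant estimate $\det\Tc(Y)\ge c_A^{n/t}$, applied to the iterates and, via Lemma~\ref{lem:positive-barycenter} and Theorem~\ref{thm:euler}, to the barycenter.

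The one difference is in the upper bound. The paper gets $r_p(X)-r_p(A_i)\le D_p(X,A_i)$ directly from the H\"older bound \eqref{eq:holder-affinity}, and then uses Jensen where you use Cauchy--Schwarz. So Theorem~\ref{thm:main} is not needed at that step. Also, your display $X^{1/2}A_iX^{1/2}\succeq\lambda_{\min}(A_i)X$ should be written with $Y$ in place of $X$.
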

Within this box, the step length \(D_p(X,\Tc(X))\) controls the optimality
gap.  This step uses \(p>1\); Proposition~\ref{prop:bures-endpoint-rate}
treats \(p=1\).

\begin{lemma}[Residual error bound]\label{lem:residual-error-bound}
Let \(1<p\le2\) and let \(X\in\Bc\) with \(\Tc(X)\in\Bc\).  Then
\begin{equation}\label{eq:error-bound}
  D_p\bigl(X,\Tc(X)\bigr)^2
  \ \ge\
  \rho_p\bigl(\Fc_w(X)-\Fc_w(X_\star)\bigr),
  \qquad
  \rho_p:=\frac{4(p-1)^2}{p^2}\left(\frac{\lambda_-}{\lambda_+}\right)^{6-2p},
\end{equation}
where \(X_\star\) is the barycenter.  Moreover \(0<\rho_p\le1\).
\end{lemma}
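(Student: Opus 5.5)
Let \(T:=\Tc(X)\), \(G:=\nabla\Fc_w(X)\) as in \eqref{eq:frechet-gradient}. The plan is to sandwich the optimality gap and the step length between multiples of a single quantity built from the residual \(X^p-S(X)\), and to let the spectral box \(\Bc\) supply all the constants. On the upper side, convexity of \(\Fc_w\) (Proposition~\ref{prop:convexity}) gives
\[
  \Fc_w(X)-\Fc_w(X_\star)\le\ip{G}{X-X_\star}\le\frob{G}\,\frob{X-X_\star}.
\]
To avoid a separate distance bound on \(X-X_\star\), I would use a quantitative convexity instead. On \(\Bc\) the map \(X\mapsto\trace X^p\) has Hessian bounded below by \(p(p-1)\lambda_+^{p-2}\) in Frobenius norm (the divided differences of \(\tau^p\) on \([\lambda_-,\lambda_+]\) are at least \((p-1)\lambda_+^{p-2}\)), and \(-2\Psi_w\) is convex. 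Hence \(\Fc_w\) is \(\mu\)-strongly convex on \(\Bc\) with \(\mu=p(p-1)\lambda_+^{p-2}\), and the Polyak--\L{}ojasiewicz bound gives
\[
  \Fc_w(X)-\Fc_w(X_\star)\le\frob{G}^2/(2\mu).
\]
This step uses \(X,X_\star\in\Bc\) (Lemma~\ref{lem:spectral-confinement}) and the convexity of \(\Bc\).

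On the lower side, I need \(D_p(X,T)^2\gtrsim\frob{G}^2\). By \eqref{eq:frechet-gradient},
\[
  G=pX^{-1/2}\bigl(X^p-S(X)\bigr)X^{-1/2}.
\]
Write \(K^p=S(X)\) and \(T=X^{-1/2}K^2X^{-1/2}\), so that \(S(X)=(X^{1/2}TX^{1/2})^{t}\) and
\[
  G=p\bigl(X^{p-1}-X^{-1/2}(X^{1/2}TX^{1/2})^tX^{-1/2}\bigr)=\nabla_Y\bigl[\trace Y^p-2\Phi_p(Y,T)\bigr]\big|_{Y=X}.
\]
That is, \(G\) is the gradient at \(Y=X\) of \(g(Y):=D_p(Y,T)^2\), whose minimizer is \(Y=T\) with \(g(T)=0\). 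Now \(g\) is itself \(\mu\)-strongly convex on \(\Bc\) by the same argument, and it has a Lipschitz gradient on \(\Bc\) with some constant \(L\). The descent-lemma inequality for \(L\)-smooth convex functions then gives \(g(X)-g(T)\ge\frob{\nabla g(X)}^2/(2L)\), i.e.
\[
  D_p(X,T)^2\ge\frob{G}^2/(2L).
\]
This step uses both \(X,T\in\Bc\), as in the hypothesis, and it is where the hypothesis \(\Tc(X)\in\Bc\) enters. Combining the two sides gives \(\rho_p=\mu/L\).

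The main obstacle is the smoothness constant \(L\) for \(Y\mapsto-2\Phi_p(Y,T)\) on \(\Bc\). I would write \(\Phi_p(Y,T)=\trace(T^{1/2}YT^{1/2})^t\) and bound the second derivative of \(M\mapsto\trace M^t\) via Daleckii--Krein divided differences of \(\tau^t\). These are bounded by \(t(1-t)\lambda_{\min}(M)^{t-2}\), with \(\lambda_{\min}(M)\ge\lambda_-^2\) and the conjugation by \(T^{1/2}\) costing \(\lambda_+^2\). This should produce \(L\lesssim p\lambda_+^{p-2}(\lambda_+/\lambda_-)^{\text{power}}\), and \(\trace Y^p\) contributes \(p(p-1)\lambda_-^{p-2}\). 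I expect the bookkeeping of the exponents to be the delicate part. Matching the stated factor \(4(p-1)^2/p^2\cdot(\lambda_-/\lambda_+)^{6-2p}\) may need a sharper route: one could compare both sides directly with \(\frob{X^{-1/2}(X^p-S(X))X^{-1/2}}\), using operator monotonicity of \(\tau^{1/t}\) or \(\tau^{t}\) with the mean-value bound \(|a^{s}-b^{s}|\ge s\min(a,b)^{s-1}|a-b|\). The factor \((p-1)\) would then appear once from each side, giving \((p-1)^2\).

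Finally, \(\rho_p\le1\) follows from Lemma~\ref{lem:sufficient-decrease}: \(D_p(X,T)^2\le\Fc_w(X)-\Fc_w(T)\le\Fc_w(X)-\Fc_w(X_\star)\). If \(X\) is not optimal, this forces any valid constant to be at most \(1\). Independently, \(4(p-1)^2/p^2\le1\) on \((1,2]\) and \(\lambda_-\le\lambda_+\), which together with \(p>1\) also gives \(\rho_p>0\).
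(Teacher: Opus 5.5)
\textbf{Gap: the lower-side inequality.} The step that fails is \(D_p(X,T)^2\ge\frob{G}^2/(2L)\). The inequality \(g(X)-\min g\ge\frob{\nabla g(X)}^2/(2L)\) is proved by evaluating the quadratic upper bound at the gradient step \(X-L^{-1}\nabla g(X)\). It therefore needs \(L\)-smoothness on a set containing that point, not just on \(\Bc\).

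Here \(g=D_p(\cdot,T)^2\) has no global Lipschitz gradient on \(\pp_n\). For \(p<2\), the Hessians of \(\trace Y^p\) and of \(-2\trace(T^{1/2}YT^{1/2})^t\) blow up as \(\lambda_{\min}(Y)\to0\). The gradient step from \(X\) need not stay in \(\Bc\), and the hypothesis \(\Tc(X)\in\Bc\) says nothing about it.

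What smoothness on the convex set \(\Bc\ni X,T\) does give, by integrating the Hessian along \([T,X]\) and using \(\nabla g(T)=0\), is only \(\frob{G}\le L\frob{X-T}\). Combined with \(g(X)\ge\frac\mu2\frob{X-T}^2\), this yields the constant \(\mu^2/L^2\), not \(\mu/L\). With your \(L\), \(\mu/L\) would in fact dominate \(\rho_p\), so everything rests on this step.

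The corrected chain still falls short. With the generic Daleckii--Krein constant you sketch, \(L\approx p(p-1)\lambda_-^{p-2}+\tfrac{p(2-p)}2\lambda_+^2\lambda_-^{p-4}\), the constant \(\mu^2/L^2\) scales like \((\lambda_-/\lambda_+)^{8-2p}\) for \(p<2\). It misses the stated \(\rho_p\) by a factor of order \((\lambda_-/\lambda_+)^2\) when the box is wide.

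\textbf{The missing idea.} You need a structure-specific residual bound in place of the generic \(L\); this is the paper's route. Put \(Q:=X^{1/2}\Tc(X)X^{1/2}\). Then \(G=pX^{-1/2}\bigl((X^2)^t-Q^t\bigr)X^{-1/2}\), with \(X^2\succeq\lambda_-^2I\) and \(Q\succeq\lambda_-X\succeq\lambda_-^2I\); the second uses \(\Tc(X)\in\Bc\). The Frobenius--Lipschitz constant of \(x\mapsto x^t\) on \([\lambda_-^2,\infty)\) is \(t\lambda_-^{p-2}\). Combine it with \(\frob{X^2-Q}\le\lambda_+\frob{X-\Tc(X)}\) and \(\frob{X^{-1/2}ZX^{-1/2}}\le\lambda_-^{-1}\frob Z\). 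This gives \(\frob G\le C_R\frob{X-\Tc(X)}\) with \(C_R=\tfrac{p^2}2\lambda_+\lambda_-^{p-3}\). Then
\[
D_p(X,\Tc(X))^2\ge\tfrac{\mu}{2}\frob{X-\Tc(X)}^2\ge\tfrac{\mu}{2C_R^2}\frob G^2\ge\tfrac{\mu^2}{C_R^2}\bigl(\Fc_w(X)-\Fc_w(X_\star)\bigr),
\]
and \(\mu^2/C_R^2=\rho_p\) exactly. The factor \((p-1)^2\) comes from the two strong-convexity moduli, of \(\Fc_w\) and of \(D_p(\cdot,\Tc(X))^2\); \(C_R\) contributes \(p^2/2\).

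The rest of your proposal is fine: the upper side (Polyak--\L{}ojasiewicz from strong convexity, which needs only \([X,X_\star]\subset\Bc\)), your identification \(G=\nabla_Y D_p(Y,\Tc(X))^2|_{Y=X}\), and the direct check of \(0<\rho_p\le1\).
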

Appendix~\ref{app:convergence} proves both lemmas; combined with
Lemma~\ref{lem:sufficient-decrease}, they give linear convergence.

\begin{theorem}[Global linear convergence]\label{thm:linear-convergence}
Let \(1<p\le2\), let \(A_1,\dots,A_m\succ0\), let \(w\in\simplex\) have
positive coordinates, let \(X_0\succ0\), and let \(X_\star\) be the barycenter.
With \(\lambda_\pm\) as in \eqref{eq:box-upper}--\eqref{eq:box-lower} and
\(\rho_p\) as in \eqref{eq:error-bound}, the iteration \eqref{eq:iteration}
satisfies, for every \(k\ge1\),
\begin{equation}\label{eq:linear-rate}
  \boxed{
  \Fc_w(X_k)-\Fc_w(X_\star)
  \ \le\
  (1-\rho_p)^{k-1}\bigl(\Fc_w(X_1)-\Fc_w(X_\star)\bigr),
  }
\end{equation}
so the objective values converge \(Q\)-linearly, and the iterates converge
\(R\)-linearly,
\begin{equation}\label{eq:linear-iterates}
  \frob{X_k-X_\star}
  \ \le\
  \left(\frac{2\bigl(\Fc_w(X_1)-\Fc_w(X_\star)\bigr)}
             {p(p-1)\lambda_+^{p-2}}\right)^{1/2}
  (1-\rho_p)^{(k-1)/2}.
\end{equation}
Moreover the computable step length certifies the gap a posteriori:
\begin{equation}\label{eq:posteriori}
  \Fc_w(X_k)-\Fc_w(X_\star)\ \le\ \rho_p^{-1}D_p(X_k,X_{k+1})^2 .
\end{equation}
\end{theorem}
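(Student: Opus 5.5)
The plan is to combine the three lemmas of this subsection in the standard sufficient-decrease/error-bound pattern, and then convert function-value decay into iterate decay by strong convexity on the spectral box. Write \(\delta_k:=\Fc_w(X_k)-\Fc_w(X_\star)\ge0\).

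\textbf{Step 1: linear rate of the gap.} By Lemma~\ref{lem:spectral-confinement}, \(X_k\in\Bc\) for all \(k\ge1\). So for each \(k\ge1\) both \(X_k\) and \(X_{k+1}=\Tc(X_k)\) lie in \(\Bc\), and Lemma~\ref{lem:residual-error-bound} applies at \(X=X_k\). Combining it with Lemma~\ref{lem:sufficient-decrease} gives
\[
  \delta_{k+1}\le\delta_k-D_p(X_k,X_{k+1})^2\le(1-\rho_p)\delta_k .
\]
Since \(0<\rho_p\le1\), induction from \(k=1\) yields \eqref{eq:linear-rate}. The same two lemmas also give the a posteriori bound \eqref{eq:posteriori}: the error bound alone reads \(\delta_k\le\rho_p^{-1}D_p(X_k,\Tc(X_k))^2\).

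\textbf{Step 2: strong convexity on \(\Bc\).} To pass to iterates, I will show that \(\Fc_w\) is strongly convex on \(\Bc\) with modulus \(\mu:=p(p-1)\lambda_+^{p-2}\) in the Frobenius norm. The term \(-2\Psi_w\) is convex by Proposition~\ref{prop:convexity}(i), so it suffices to treat \(g(X)=\trace X^p\). Its second derivative is given by the Daleckii--Krein formula,
\[
  D^2g(X)[H,H]=p\sum_{j,l}f^{[1]}(\lambda_j,\lambda_l)\,|h_{jl}|^2 ,
\]
with \(f(\tau)=\tau^{p-1}\) and \(h_{jl}\) the entries of \(H\) in an eigenbasis of \(X\). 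By the mean value theorem, each divided difference is \(f'(\xi)=(p-1)\xi^{p-2}\) for some \(\xi\in[\lambda_-,\lambda_+]\). Since \(p-2\le0\), this is at least \((p-1)\lambda_+^{p-2}\). Hence \(D^2g(X)[H,H]\ge\mu\frob{H}^2\) on \(\Bc\). The box \(\Bc\) is convex, so the segment from \(X_\star\) to \(X_k\) stays inside it.

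\textbf{Step 3: rate of the iterates.} \(X_\star\in\Bc\) is positive definite, so its gradient vanishes by Theorem~\ref{thm:euler}. A second-order Taylor expansion along the segment then gives \(\delta_k\ge\frac{\mu}{2}\frob{X_k-X_\star}^2\). Inserting Step 1 produces exactly \eqref{eq:linear-iterates}, constant included.

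The main obstacle is Step 2, namely getting the modulus right from the Daleckii--Krein divided differences. For \(p<2\) the binding bound is the upper spectral bound \(\lambda_+\). If eigenvalues coincide, \(f^{[1]}\) is read as \(f'\), which gives the same lower bound.
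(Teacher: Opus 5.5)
Your proposal is correct and follows the paper's proof essentially step for step. Sufficient decrease combined with the residual error bound at $X=X_k\in\Bc$, $k\ge1$, gives the $Q$-linear rate and the a posteriori bound. Strong convexity of $\Fc_w$ on the convex box $\Bc$ with modulus $p(p-1)\lambda_+^{p-2}$, together with $\nabla\Fc_w(X_\star)=0$, then gives the iterate bound with exactly the stated constant. The paper obtains that modulus by the same Daleckii--Krein divided-difference argument, in the appendix proof of Lemma~\ref{lem:residual-error-bound}.
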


\begin{proof}
By Lemma~\ref{lem:spectral-confinement}, \(X_k\in\Bc\) for every \(k\ge1\), and
\(X_\star\in\Bc\).  For \(k\ge1\), Lemma~\ref{lem:sufficient-decrease} followed
by Lemma~\ref{lem:residual-error-bound} at \(X=X_k\) gives
\[
  \Fc_w(X_{k+1})-\Fc_w(X_\star)
  \le\Fc_w(X_k)-\Fc_w(X_\star)-D_p(X_k,X_{k+1})^2
  \le(1-\rho_p)\bigl(\Fc_w(X_k)-\Fc_w(X_\star)\bigr),
\]
and iterating from \(k=1\) proves \eqref{eq:linear-rate}.  Strong convexity of
\(\Fc_w\) on \(\Bc\) with modulus \(\mu_F=p(p-1)\lambda_+^{p-2}\), together
with \(\nabla\Fc_w(X_\star)=0\), gives
\(\Fc_w(X)-\Fc_w(X_\star)\ge\tfrac12\mu_F\frob{X-X_\star}^2\)
on \(\Bc\), which with \eqref{eq:linear-rate} gives
\eqref{eq:linear-iterates}.  Estimate \eqref{eq:posteriori} is
\eqref{eq:error-bound} at \(X=X_k\).
\end{proof}

\begin{rmk}[What is sharp and what is not]\label{rem:rate-constants}
The sufficient decrease estimate is sharp, but the rate \(\rho_p\) is
uniform over the spectral box containing the iterates after the first step.
The determinant bound for \(\lambda_-\) can make this rate deteriorate rapidly
with the dimension.  On random instances with \(n\le5\), the certified
\(\rho_p\) goes down to \(10^{-57}\), against a per-step contraction of order
one.  Theorem~\ref{thm:linear-convergence} thus establishes global linear
convergence with explicit constants depending on the data, weights, \(p\),
and \(X_0\), but the bound can be too small to predict the observed rate.
A local estimate from the derivative of \(\Tc\) at \(X_\star\) may be sharper.
The a posteriori bound \eqref{eq:posteriori} inherits the same limitation;
Proposition~\ref{prop:certificate} instead bounds \(\min\Fc_w\) from below
without using \(\lambda_\pm\).
\end{rmk}

\FloatBarrier
\begin{aiusage}  %
The results in this paper were developed through extensive use of an AI
assistant (OpenAI's GPT~Pro). None of the claims were obtained zero-shot:
they emerged through sustained back-and-forth discussion, including failed
attempts, counterexamples, audits, and repairs.  The AI-driven side of this
work began in October 2025; the first complete proof was obtained on
11~August 2026.
\end{aiusage}

\aiacknowledgements

\appendix

\section*{Appendices}
\begin{center}\small
\begin{tabular}{@{}l l r@{}}
\ref{sec:balanced}      & \nameref{sec:balanced}      & \pageref{sec:balanced}\\
\ref{sec:endpoints}     & \nameref{sec:endpoints}     & \pageref{sec:endpoints}\\
\ref{sec:variance}      & \nameref{sec:variance}      & \pageref{sec:variance}\\
\ref{app:convergence}   & \nameref{app:convergence}   & \pageref{app:convergence}\\
\ref{app:first-order}   & \nameref{app:first-order}   & \pageref{app:first-order}
\end{tabular}
\end{center}

\section{A balanced trace inequality}\label{sec:balanced}

For comparison with the support inequality, we record the trace estimate
obtained by combining Theorem~\ref{thm:partition} with the
Araki--Lieb--Thirring inequality.  For positive definite \(P,Q\), the
parallel sum is \(P\parsum Q:=(P^{-1}+Q^{-1})^{-1}=P(P+Q)^{-1}Q\).
It extends to semidefinite matrices by positive regularization
\citep{anderson1969series} and satisfies
\((\alpha P)\parsum(\alpha Q)=\alpha(P\parsum Q)\) for \(\alpha\ge0\).

\begin{theorem}[Balanced trace inequality]\label{thm:balanced-trace}
Let \(1\le p\le2\), \(t=p/2\), and let \(A,B,C\succ0\).  Define
\(Y:=A^{1/2}BA^{1/2}\) and \(Z:=A^{1/2}CA^{1/2}\).  If
\(Y^t+Z^t=\lambda A^p\) for some \(\lambda>0\), then
\begin{equation}\label{eq:balanced-trace}
  \Phi_p(B,C)\ge\lambda\,\trace(Y^t\parsum Z^t).
\end{equation}
The conclusion also holds when \(A,B,C\) have a common support, by restricting
to that support.
\end{theorem}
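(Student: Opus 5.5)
We follow the pattern of the proof of Proposition~\ref{prop:support}, but apply Theorem~\ref{thm:partition} to a different free argument so that it produces the parallel sum directly. Put \(P:=Y^t\), \(Q:=Z^t\) and \(T:=P+Q=\lambda A^p\), so that \(T\succ0\). As in that proof, \(t/p=1/2\) gives
\[
  P^{1/p}=Y^{1/2},\qquad Q^{1/p}=Z^{1/2},\qquad T^{-1/p}=\lambda^{-1/p}A^{-1}.
\]
The identity computed there also carries over verbatim, since it used only the definitions of \(Y,Z\) and not the maximizing property or the normalization:
\[
  \pnorm{Z^{1/2}A^{-1}Y^{1/2}}{p}^p=\Phi_p(B,C).
\]
Multiplying by the scalar factor, \(\pnorm{Q^{1/p}T^{-1/p}P^{1/p}}{p}^p=\lambda^{-1}\Phi_p(B,C)\).

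The target quantity is \(\trace(P\parsum Q)=\trace(PT^{-1}Q)\). To reach it, apply Theorem~\ref{thm:partition} with \(H_1=P\), \(H_2=Q\), \(H=T\), and the free argument
\[
  X:=T^{1/p'}\,(PT^{-1}Q)^{?}.
\]
A cleaner choice is the one that makes the \(H_2\)-term collapse. Take \(X:=T^{1/p}T^{-1}Q^{1/p}\), or more naturally \(X:=H^{1/p}\,\cdot\,\)(something). The plan is to choose \(X\) so that one of the two terms in \eqref{eq:partition-intro} is the cross term above and the other is comparable to \(\trace(P\parsum Q)\).

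The concrete attempt is \(X=Q^{1/p}\), exactly as in Proposition~\ref{prop:support}. This gives
\[
  \lambda^{-1}\Phi_p(B,C)+\pnorm{Q^{1/p}T^{-1/p}Q^{1/p}}{p}^p\ge\trace Q.
\]
The self term is \(\pnorm{Q^{1/p}T^{-1/p}Q^{1/p}}{p}^p=\trace\bigl(Q^{1/p}T^{-1/p}Q^{1/p}\bigr)^p\). We then need the inequality
\[
  \trace\bigl(Q^{1/p}T^{-1/p}Q^{1/p}\bigr)^p\le\trace\bigl(QT^{-1}Q\bigr).
\]
This is where Araki--Lieb--Thirring enters. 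Write \(Q^{1/p}T^{-1/p}Q^{1/p}\) as \(Q^{1/(2p)}\bigl(Q^{1/(2p)}T^{-1/p}Q^{1/(2p)}\bigr)Q^{1/(2p)}\), or use ALT in the form \(\trace(K^{1/p}L^{1/p}K^{1/p})^p\le\trace(KLK)\) for \(p\ge1\) with \(K=Q\), \(L=T^{-1}\). Granting this, \(\trace Q-\trace(QT^{-1}Q)=\trace\bigl(Q(I-T^{-1}Q)\bigr)=\trace(QT^{-1}P)=\trace(P\parsum Q)\). Therefore
\[
  \lambda^{-1}\Phi_p(B,C)\ge\trace(P\parsum Q),
\]
which is \eqref{eq:balanced-trace}. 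The same argument with \(X=P^{1/p}\) gives the symmetric version.

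The main obstacle is verifying the ALT step in the correct direction. ALT states \(\trace(K^{r}LK^{r})^{q}\le\trace(K^{rq}L^{q}K^{rq})\) for \(q\ge1\). With \(q=p\), \(K=Q^{1/p}\), \(r=1\) and \(L=T^{-1/p}\), this yields \(\trace(Q^{1/p}T^{-1/p}Q^{1/p})^p\le\trace(QT^{-1}Q)\), as required. One should double-check that the exponent conventions match; \(p\ge1\) is exactly the ALT range.

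For the common-support case, we restrict \(A,B,C\) to their common support, where they are positive definite, and note that the hypotheses and both sides of \eqref{eq:balanced-trace} are unchanged by this restriction.
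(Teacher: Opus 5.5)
Your proof is correct and is essentially the paper's argument with the roles of \(P\) and \(Q\) exchanged. The paper takes \(X=P^{1/p}\) in Theorem~\ref{thm:partition}, bounds \(\pnorm{P^{1/p}T^{-1/p}P^{1/p}}{p}^p\le\trace(PT^{-1}P)\) by Araki--Lieb--Thirring, and uses \(\trace P-\trace(PT^{-1}P)=\trace(P\parsum Q)\), which mirrors your steps exactly. In a write-up, delete the abandoned choices of \(X\) in your third paragraph, and say explicitly that your cross term \(\pnorm{P^{1/p}T^{-1/p}Q^{1/p}}{p}\) is the norm of the adjoint of \(Q^{1/p}T^{-1/p}P^{1/p}\), so it equals the quantity you computed.
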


\begin{proof}
Set \(P:=Y^t\), \(Q:=Z^t\), and \(T:=P+Q=\lambda A^p\).
Since \(t/p=1/2\), we have \(P^{1/p}=Y^{1/2}\), \(Q^{1/p}=Z^{1/2}\),
and \(T^{-1/p}=\lambda^{-1/p}A^{-1}\).
Consequently,
\begin{equation}\label{eq:cross-factorization}
  \Phi_p(B,C)
  =\lambda\pnorm{Q^{1/p}T^{-1/p}P^{1/p}}{p}^p.
\end{equation}
Indeed, \(\pnorm{Z^{1/2}A^{-1}Y^{1/2}}{p}^p=\Phi_p(B,C)\) by the argument
given for \eqref{eq:cross-exact} in the proof of
Proposition~\ref{prop:support}, with \(A\) in place of \(A_*\), and
\eqref{eq:cross-factorization} follows from the scalar factor
\(\lambda^{-1/p}\).

Apply Theorem~\ref{thm:partition} to the decomposition \(T=Q+P\) and to
\(X=P^{1/p}\).  We get
\begin{equation}\label{eq:partition-balanced}
  \pnorm{Q^{1/p}T^{-1/p}P^{1/p}}{p}^p
  +
  \pnorm{P^{1/p}T^{-1/p}P^{1/p}}{p}^p
  \ge\trace P.
\end{equation}
For \(p\ge1\), the Araki--Lieb--Thirring trace inequality
\citep{araki1990inequality,bhatia1997matrix} gives
\begin{equation}\label{eq:alt-balanced}
  \trace\left(P^{1/p}T^{-1/p}P^{1/p}\right)^p
  \le\trace(PT^{-1}P).
\end{equation}
Indeed, apply the Araki--Lieb--Thirring inequality to \(X=P^{2/p}\),
\(W=T^{-1/p}\), and exponent \(p\).
Combining \eqref{eq:partition-balanced} and \eqref{eq:alt-balanced},
\begin{align}
  \pnorm{Q^{1/p}T^{-1/p}P^{1/p}}{p}^p
  &\ge\trace P-\trace(PT^{-1}P)\notag\\
  &=\trace(PT^{-1}Q)=\trace(P\parsum Q).
  \label{eq:parallel-lower}
\end{align}
Multiply by \(\lambda\) and use \eqref{eq:cross-factorization}.
\end{proof}

\begin{rmk}\label{rem:no-alt-support}
No Araki--Lieb--Thirring inequality is used in
Proposition~\ref{prop:support}.  The partition inequality evaluates the cross
term and the relevant self term simultaneously, which is stronger than first
bounding the self term by a separate trace inequality.
\end{rmk}

\section{Endpoints and closed forms}\label{sec:endpoints}

\begin{corr}[\(p=2\): the arithmetic mean]\label{cor:p2}
For \(p=2\) the unique barycenter is \(X_\star=\sum_iw_iA_i\).
\end{corr}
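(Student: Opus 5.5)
The plan is to verify the Euler equation of Theorem~\ref{thm:euler} at \(X=\sum_iw_iA_i\) and then invoke uniqueness. At \(p=2\) we have \(t=1\), so \(\Phi_2(X,A)=\trace(X^{1/2}AX^{1/2})=\trace(XA)\). This is linear in \(X\), and
\[
  D_2(X,A)^2=\trace X^2+\trace A^2-2\trace(XA)=\frob{X-A}^2 .
\]

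The direct route is then a least-squares computation. We have \(\Fc_w(X)=\sum_iw_i\frob{X-A_i}^2\), which expands as
\[
  \Fc_w(X)=\frob{X-\bar A}^2+\sum_iw_i\frob{A_i-\bar A}^2,
  \qquad \bar A:=\sum_iw_iA_i ,
\]
because the cross term \(\sum_iw_i\ip{X-\bar A}{\bar A-A_i}\) vanishes when \(\sum_iw_i=1\). Since \(\bar A\in\pp_n\), it minimizes \(\Fc_w\) over \(\pp_n\). Strict convexity (Theorem~\ref{thm:existence} with \(p=2>1\)) gives uniqueness, and so does the displayed identity directly.

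An alternative route through the fixed-point equation works when \(\bar A\succ0\). With \(p=2\), equation \eqref{eq:euler} reads
\[
  X^2=X^{1/2}\Bigl(\sum_iw_iA_i\Bigr)X^{1/2},
\]
which for \(X\succ0\) is equivalent to \(X=\bar A\). This route needs \(\bar A\succ0\), so the least-squares identity above is preferable: it covers singular data and singular \(\bar A\) without any approximation. There is no real obstacle. The only point to check is that \(\bar A\) is feasible, which holds because \(\pp_n\) is a convex cone.
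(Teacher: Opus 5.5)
Your least-squares argument is correct and is essentially the paper's proof: at \(p=2\) one has \(D_2(X,A)=\pnorm{X-A}{2}\), so \(\Fc_w(X)=\pnorm{X-\sum_iw_iA_i}{2}^2+\text{const}\), the mean lies in \(\pp_n\), and uniqueness follows (the paper cites Theorem~\ref{thm:existence}, while your bias--variance identity also gives it directly). You are also right that the Euler-equation route covers only the case \(\sum_iw_iA_i\succ0\), which is why the direct route is the one to use here.
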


\begin{proof}
At \(p=2\) the distance \(D_2(A,B)=\pnorm{A-B}{2}\) is Euclidean, so
\(\Fc_w(X)=\pnorm{X-\sum_iw_iA_i}{2}^2+\text{const}\).  The arithmetic mean
lies in \(\pp_n\) and is the unique minimizer by Theorem~\ref{thm:existence}.
\end{proof}

\begin{corr}[\(p=1\): the Bures--Wasserstein barycenter]\label{cor:p1}
For \(p=1\), a positive definite matrix \(X\) is a barycenter if and only if
\begin{equation}\label{eq:bw-euler}
  X=\sum_{i=1}^mw_i\bigl(X^{1/2}A_iX^{1/2}\bigr)^{1/2}.
\end{equation}
For real symmetric matrices, this is the covariance equation for a barycenter of
the centred Gaussians \(N(0,A_i)\) in \(2\)-Wasserstein space
\citep{agueh2011barycenters,alvarez2016fixed}.
\end{corr}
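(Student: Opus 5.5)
This is the specialization of Theorem~\ref{thm:euler} to \(p=1\), so the plan is to read off \eqref{eq:euler} with \(p=1\) and then identify the result with the Gaussian Wasserstein barycenter equation. Setting \(p=1\) gives \(t=1/2\), and the fixed-point equation \eqref{eq:euler} becomes
\[
  X=\sum_{i=1}^mw_i\bigl(X^{1/2}A_iX^{1/2}\bigr)^{1/2},
\]
which is exactly \eqref{eq:bw-euler}. Theorem~\ref{thm:euler} was stated and proved for the closed range \(1\le p\le2\), so the equivalence for positive definite \(X\) needs no further argument. Its ingredients are the gradient formula \eqref{eq:frechet-gradient} coming from Lemma~\ref{lem:differential}, interiority of \(X\succ0\), and convexity from Proposition~\ref{prop:convexity}(iii). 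At \(p=1\) the last of these gives only plain convexity, not strict convexity, but that is all the converse direction uses: for a convex function, every stationary point is a global minimizer. The only thing I would double-check here is that no step in the proof of Theorem~\ref{thm:euler} secretly needs \(p>1\). The term \(\trace X^p\) becomes \(\trace X\), whose gradient is \(I=X^{p-1}\) at \(p=1\), so the formula persists.

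For the Gaussian identification, I would recall that for real symmetric \(A,B\succeq0\) the \(2\)-Wasserstein distance between centred Gaussians is
\[
  W_2\bigl(N(0,A),N(0,B)\bigr)^2=\trace A+\trace B-2\trace\bigl(A^{1/2}BA^{1/2}\bigr)^{1/2}=D_1(A,B)^2.
\]
This is the Bures--Wasserstein formula of \citet{olkin1982distance} and \citet{bhatia2019bures}. The Wasserstein barycenter of Gaussians is Gaussian, with covariance characterized by the fixed-point equation of \citet{agueh2011barycenters} and \citet{alvarez2016fixed}, and that equation is precisely \eqref{eq:bw-euler}. So over real symmetric matrices the \(D_1\)-barycenter and the Gaussian Wasserstein barycenter covariance satisfy the same equation. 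That is the claimed statement, cited from the literature rather than reproved.

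I expect no genuine obstacle. The one point needing care is confirming that the \(p=1\) endpoint sits inside the hypotheses of Theorem~\ref{thm:euler} and that the lack of strict convexity does not matter for the equivalence. It affects uniqueness, not the characterization.
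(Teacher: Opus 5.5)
Your proposal is correct and matches the paper's proof. The paper also obtains the characterization by specializing Theorem~\ref{thm:euler} to $p=1$, and it also obtains the Gaussian identification by citing the fact that $D_1$ is the $W_2$ distance between centred Gaussians. Your extra checks are valid, and the paper leaves them implicit: at $p=1$ the gradient of $\trace X$ is $I=X^{p-1}$, and plain convexity suffices for the converse direction.
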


\begin{proof}
The characterization is Theorem~\ref{thm:euler} at \(p=1\).  For real
symmetric covariances, \(D_1\) is the \(W_2\) distance between the
corresponding centred Gaussians \citep{olkin1982distance,bhatia2019bures}.
\end{proof}

For commuting inputs, the barycenter formula is explicit at every exponent.

\begin{prop}[Commuting inputs]\label{prop:commuting}
If \(A_1,\dots,A_m\) commute pairwise and \(\sum_iw_iA_i^{p/2}\succ0\), then
\begin{equation}\label{eq:power-mean}
  X_\star=\Bigl(\sum_{i=1}^mw_iA_i^{p/2}\Bigr)^{2/p}
\end{equation}
is a barycenter, the power mean of order \(p/2\).
On commuting matrices, the map \(A\mapsto A^{p/2}\) converts \(D_p\) to the
Hilbert--Schmidt distance:
\begin{equation}\label{eq:commuting-isometry}
  D_p(A,B)=\pnorm{A^{p/2}-B^{p/2}}{2}.
\end{equation}
\end{prop}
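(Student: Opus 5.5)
First I would establish the isometry \eqref{eq:commuting-isometry}. For commuting \(A,B\in\pp_n\) we have \(A^{1/2}BA^{1/2}=AB\) with \(AB\succeq0\), and the powers behave multiplicatively: \((AB)^{t}=A^{t}B^{t}\) with \(t=p/2\). Hence \(\Phi_p(A,B)=\trace A^{p/2}B^{p/2}\). Since \(\trace A^p=\trace(A^{p/2})^2\), the definition \eqref{eq:distance-definition} becomes
\[
  D_p(A,B)^2=\trace(A^{p/2})^2+\trace(B^{p/2})^2-2\trace A^{p/2}B^{p/2}
  =\pnorm{A^{p/2}-B^{p/2}}{2}^2,
\]
which uses only that \(A^{p/2}\) and \(B^{p/2}\) are Hermitian.

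For the barycenter I would check the Euler equation \eqref{eq:euler}, and then invoke Theorem~\ref{thm:euler}. Put \(X:=X_\star\). It is a function of the commuting family \(A_i\), so it commutes with each \(A_i\). The hypothesis \(\sum_iw_iA_i^{p/2}\succ0\) gives \(X\succ0\). Because \(X\) and \(A_i\) commute, \(X^{1/2}A_iX^{1/2}=XA_i\) and \((XA_i)^{p/2}=X^{p/2}A_i^{p/2}\). Therefore
\[
  \sum_iw_i(X^{1/2}A_iX^{1/2})^{p/2}=X^{p/2}\sum_iw_iA_i^{p/2}=X^{p/2}X^{p/2}=X^p,
\]
which is exactly \eqref{eq:euler}. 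Theorem~\ref{thm:euler} then says that \(X_\star\) is a barycenter; it applies to positive definite \(X\) and does not require the data to be definite.

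As a sanity check, one can also give an alternative argument via the isometry. Restricted to matrices commuting with all \(A_i\), the functional \(\Fc_w\) is a Euclidean least-squares problem in the variable \(Y=X^{p/2}\), with minimizer \(Y=\sum_iw_iA_i^{p/2}\). However, this route only gives minimality among commuting \(X\). The Euler-equation route avoids that gap, so it is the one I would use.

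The only real obstacle is justifying the functional-calculus identities \((XA_i)^{p/2}=X^{p/2}A_i^{p/2}\) and \(X^{1/2}A_iX^{1/2}=XA_i\). I would handle this by simultaneous unitary diagonalization of the commuting Hermitian family \(\{A_i\}\) together with \(X\). In that basis every identity reduces to the scalar statement \((xa)^{s}=x^sa^s\) for \(x,a\ge0\), \(s>0\). The convention for powers on the support causes no trouble, since zero eigenvalues map to zero on both sides.
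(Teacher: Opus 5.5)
Your proposal is correct and takes essentially the same route as the paper: you verify the Euler equation \eqref{eq:euler} for $X_\star$ using $X^{1/2}A_iX^{1/2}=XA_i$ and $(XA_i)^{p/2}=X^{p/2}A_i^{p/2}$, then invoke Theorem~\ref{thm:euler}, and you get the isometry from $\Phi_p(A,B)=\trace A^{p/2}B^{p/2}$. Your simultaneous-diagonalization justification of the functional-calculus identities, and your remark that the least-squares route would only give minimality among commuting $X$, are accurate points that the paper leaves implicit.
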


\begin{proof}
Let \(X\) be the right side of \eqref{eq:power-mean}; it commutes with every
\(A_i\) and is positive definite.  Then
\(\bigl(X^{1/2}A_iX^{1/2}\bigr)^{p/2}=(XA_i)^{p/2}=X^{p/2}A_i^{p/2}\), whence
\[
  \sum_iw_i\bigl(X^{1/2}A_iX^{1/2}\bigr)^{p/2}
  =X^{p/2}\sum_iw_iA_i^{p/2}=X^{p/2}X^{p/2}=X^p,
\]
which is \eqref{eq:euler}; apply Theorem~\ref{thm:euler}.  For
\eqref{eq:commuting-isometry}, commutativity gives
\(\Phi_p(A,B)=\trace(AB)^{p/2}=\trace(A^{p/2}B^{p/2})\), so
\(D_p(A,B)^2=\trace(A^{p/2}-B^{p/2})^2\).
\end{proof}

\begin{rmk}[Equivariance]\label{rem:equivariance}
Since \(\Phi_p(UAU^*,UBU^*)=\Phi_p(A,B)\) for unitary \(U\), barycenters are
unitarily equivariant: if \(X_\star\) is a barycenter, then
\(UX_\star U^*\) is a barycenter of \((UA_iU^*,w_i)\).
Since \(D_p(sA,sB)^2=s^pD_p(A,B)^2\) for \(s\ge0\),
\(sX_\star\) is likewise a barycenter of \((sA_i,w_i)\).
General invertible congruences need not preserve barycenters, except at
\(p=2\), where the arithmetic mean is equivariant.
\end{rmk}

\begin{rmk}[Endpoint checks]\label{rem:lin-endpoints}
For the matrices \(A_i=X_iX_i^*\) in the proof of
Theorem~\ref{thm:lin-block-norm}, at \(p=2\) we have
\(\pnorm{H_{ij}}{2}^2=\trace(A_iA_j)\), so the matrix in
\eqref{eq:lin-block-norm-psd} is the ordinary
Hilbert--Schmidt Gram matrix of \(A_1,A_2,A_3\).  Its positivity for any
number of blocks is the Marcus--Watkins case.  At \(p=1\), three-point
positivity for the Bures affinity recovers Drury's theorem.
\end{rmk}

\section{The variance inequality and three-point geometry}\label{sec:variance}

To bound \(\min\Fc_w\), we optimize the radius and apply a support inequality
on \(\sphere\).

\subsection{Reduction to the sphere}\label{sec:reduction}

\begin{lemma}[Radial reduction]\label{lem:radial-reduction}
For \(1\le p\le2\) and \(w\in\simplex\),
\(\Psi_*:=\max_{\what X\in\sphere}\Psi_w(\what X)\ge0\), and
\begin{equation}\label{eq:min-frechet}
  \min_{X\in\pp_n}\Fc_w(X)=\sum_{i=1}^mw_i\trace A_i^p-\Psi_*^2 .
\end{equation}
Moreover \(X\) is a barycenter if and only if \(X=\Psi_*^{2/p}\what X\) for some
maximizer \(\what X\) of \(\Psi_w\) on \(\sphere\).
\end{lemma}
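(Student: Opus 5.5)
The plan is to separate the radial and angular parts of \(\Fc_w\) using the two homogeneities recorded after \eqref{eq:frechet-expansion}, and then minimize over the radius explicitly. First, \(\Psi_*\) exists because \(\sphere\) is compact and \(\Psi_w\) is continuous. It is nonnegative because every affinity is nonnegative by \eqref{eq:holder-affinity}. Next write any nonzero \(X\in\pp_n\) as \(X=s\what X\) with \(\what X\in\sphere\) and \(s>0\), so that \(s=r_p(X)^{2/p}\). Then \(\trace X^p=s^p\) and \(\Psi_w(X)=s^{t}\Psi_w(\what X)=s^{p/2}\Psi_w(\what X)\). With \(\rho:=s^{p/2}=r_p(X)\) the expansion \eqref{eq:frechet-expansion} becomes
\[
  \Fc_w(X)=\rho^2-2\rho\,\Psi_w(\what X)+\sum_iw_i\trace A_i^p .
\]
At \(X=0\) this formula still holds with \(\rho=0\).

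For fixed \(\what X\), the quadratic \(\rho\mapsto\rho^2-2\rho\Psi_w(\what X)\) on \([0,\infty)\) attains its minimum \(-\Psi_w(\what X)^2\) exactly at \(\rho=\Psi_w(\what X)\ge0\). Minimizing next over \(\what X\in\sphere\), the value \(-\Psi_w(\what X)^2\) is smallest exactly when \(\Psi_w(\what X)\) is largest, since \(\Psi_w\ge0\). This gives \eqref{eq:min-frechet}. One should note that the joint infimum over \((\rho,\what X)\) is attained and equals the iterated one, which is immediate because every \(X\) is represented by some pair.

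For the characterization of barycenters: if \(\what X\) maximizes \(\Psi_w\) on \(\sphere\), then \(X=\Psi_*^{2/p}\what X\) has \(\rho=\Psi_*\), and the display gives \(\Fc_w(X)=\sum_iw_i\trace A_i^p-\Psi_*^2\), so \(X\) is a barycenter. Conversely, suppose \(X\) attains the minimum. If \(X\neq0\), then
\[
  \rho^2-2\rho\Psi_w(\what X)\ge-\Psi_w(\what X)^2\ge-\Psi_*^2 ,
\]
and equality is forced in both inequalities. Hence \(\Psi_w(\what X)=\Psi_*\) and \(\rho=\Psi_*\), that is, \(X=\Psi_*^{2/p}\what X\).

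The main obstacle is the degenerate case \(X=0\), where \(\what X\) is undefined. If \(0\) is a barycenter, then \(\Psi_*=0\), and conversely. Moreover \(\Psi_*>0\) always holds here: since the data are not all zero, some \(A_i\neq0\) with \(w_i>0\), and taking \(\what X=\what A_i\) gives \(\Psi_w\ge w_i\Phi_p(\what A_i,A_i)>0\). So \(0\) is never a barycenter. The statement is then consistent either way, since \(0=\Psi_*^{2/p}\what X\) exactly when \(\Psi_*=0\).
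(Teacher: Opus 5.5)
Your proof is correct and follows the paper's argument: you write $X=s\what X$, substitute $\rho=s^{p/2}$ to make $\Fc_w$ a quadratic in $\rho$, minimize in $\rho$, and then maximize $\Psi_w$ over $\sphere$. Your handling of the converse direction and of $X=0$ is more explicit than the paper's. One slip: the claim that $\Psi_*>0$ always holds is false, because weights in $\simplex$ may vanish on every nonzero $A_i$; then $\Psi_w\equiv0$ and $0$ is the barycenter, as the paper notes. Your closing observation, that $0=\Psi_*^{2/p}\what X$ exactly when $\Psi_*=0$, already covers this case, so the proof still goes through.
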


\begin{proof}
Every nonzero \(X\in\pp_n\) is uniquely \(X=s\what X\) with \(\what X\in\sphere\)
and \(s=r_p(X)^{2/p}>0\); the value \(s=0\) corresponds to \(X=0\).  By
\eqref{eq:frechet-expansion} and homogeneity,
\[
  \Fc_w(s\what X)
  =s^p-2s^{p/2}\Psi_w(\what X)+\sum_iw_i\trace A_i^p .
\]
Substituting \(u:=s^{p/2}\in[0,\infty)\), the dependence on \(s\) is
\(u^2-2u\Psi_w(\what X)\), which is minimized uniquely at
\(u=\Psi_w(\what X)\) with value \(-\Psi_w(\what X)^2\).  Since
\(\Psi\mapsto-\Psi^2\) is strictly decreasing on \([0,\infty)\), minimizing
over \(\what X\) selects exactly the maximizers of \(\Psi_w\) on \(\sphere\),
and gives \eqref{eq:min-frechet}.

The value \(\Psi_*\) is positive if \(w_i>0\) and \(A_i\ne0\) for some \(i\): at
\(\what X=n^{-1/p}I\), the corresponding term is
\(\Phi_p(\what X,A_i)=n^{-1/2}\trace A_i^{p/2}>0\).
If no such index exists, \(\Psi_w\equiv0\).
\end{proof}

\begin{corr}[Value and radius of the barycenter]\label{cor:value}
Let \(X_\star\succ0\) be a barycenter.  Then
\(\trace X_\star^p=\Psi_w(X_\star)=\Psi_*^2\),
\begin{align}
  \min_{X\in\pp_n}\Fc_w(X)&=\sum_{i=1}^mw_i\trace A_i^p-\trace X_\star^p,
  \label{eq:min-value}\\
  r_p(X_\star)&\le\sum_{i=1}^mw_ir_p(A_i).
  \label{eq:radial-bound}
\end{align}
\end{corr}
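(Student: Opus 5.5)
The plan is to read off all three claims from Lemma~\ref{lem:radial-reduction} and the Euler equation of Theorem~\ref{thm:euler}. Write \(X_\star=s\what X\) with \(\what X\in\sphere\) and \(s=r_p(X_\star)^{2/p}\); this is legitimate because \(X_\star\succ0\) is nonzero. By Lemma~\ref{lem:radial-reduction}, \(\what X\) maximizes \(\Psi_w\) on \(\sphere\) and \(s^{p/2}=\Psi_*\). Hence \(\trace X_\star^p=s^p=\Psi_*^2\). Homogeneity of degree \(t=p/2\) gives
\[
  \Psi_w(X_\star)=s^{p/2}\Psi_w(\what X)=\Psi_*\cdot\Psi_*=\Psi_*^2 .
\]
As an independent check, taking traces in \eqref{eq:euler} yields \(\trace X_\star^p=\sum_iw_i\Phi_p(X_\star,A_i)=\Psi_w(X_\star)\) directly. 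Substituting \(\Psi_*^2=\trace X_\star^p\) into \eqref{eq:min-frechet} then gives \eqref{eq:min-value}.

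For the radial bound \eqref{eq:radial-bound}, note that \(r_p(X_\star)=(\trace X_\star^p)^{1/2}=\Psi_*\). It therefore suffices to bound \(\Psi_*\). For any \(\what X\in\sphere\), the H\"older estimate \eqref{eq:holder-affinity} gives
\[
  \Phi_p(\what X,A_i)\le r_p(\what X)\,r_p(A_i)=r_p(A_i),
\]
so \(\Psi_w(\what X)\le\sum_iw_ir_p(A_i)\). Taking the maximum over \(\sphere\) gives \(\Psi_*\le\sum_iw_ir_p(A_i)\), which is \eqref{eq:radial-bound}.

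There is no real obstacle in this argument. The only point needing care is confirming that the decomposition \(X_\star=s\what X\) really selects a maximizer with \(s^{p/2}=\Psi_*\). This is exactly the ``moreover'' clause of Lemma~\ref{lem:radial-reduction}, so it requires no further work. Positive definiteness of \(X_\star\) is used only to guarantee \(X_\star\neq0\), or alternatively to justify the trace of the Euler equation as a cross-check.
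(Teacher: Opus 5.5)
Your proof is correct, but it reaches the conclusions by a slightly different path from the paper's.

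The paper obtains \(\trace X_\star^p=\Psi_w(X_\star)\) by taking traces in the Euler equation \eqref{eq:euler}. It then identifies this common value as \(\Psi_*^2\) by comparing \(\Fc_w(X_\star)\), expanded via \eqref{eq:frechet-expansion}, with the minimum in \eqref{eq:min-frechet}. For \eqref{eq:radial-bound} it applies \eqref{eq:holder-affinity} at \(X_\star\) itself, \(r_p(X_\star)^2=\Psi_w(X_\star)\le r_p(X_\star)\sum_iw_ir_p(A_i)\), and divides by \(r_p(X_\star)\).

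You instead read everything off the ``moreover'' clause of Lemma~\ref{lem:radial-reduction} together with homogeneity. You then apply H\"older on \(\sphere\) to bound \(\Psi_*\); the two H\"older steps are the same inequality before and after rescaling. Identifying your \(s\) with \(\Psi_*^{2/p}\) uses two small facts: the decomposition \(X=s\what X\) of a nonzero matrix is unique, and \(X_\star\ne0\) forces \(\Psi_*>0\). Both are immediate, so this is not a gap.

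What your route buys is that it never invokes Theorem~\ref{thm:euler}, so it needs only \(X_\star\ne0\) rather than \(X_\star\succ0\). The three conclusions therefore hold for every nonzero barycenter, including singular ones, such as \(\sum_iw_iA_i\) at \(p=2\) when the \(A_i\) have a common kernel vector. The paper's route is a one-line consequence of the fixed-point equation, and that equation is exactly where the hypothesis \(X_\star\succ0\) is needed.
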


\begin{proof}
Taking traces in \eqref{eq:euler} gives
\(\trace X_\star^p=\sum_iw_i\Phi_p(X_\star,A_i)=\Psi_w(X_\star)\); the value
\(\Psi_*^2\) and \eqref{eq:min-value} then follow from
Lemma~\ref{lem:radial-reduction} and \eqref{eq:frechet-expansion}.  For
\eqref{eq:radial-bound}, combine \(r_p(X_\star)^2=\trace X_\star^p=\Psi_w(X_\star)\)
with \eqref{eq:holder-affinity} to obtain
\(r_p(X_\star)^2=\sum_iw_i\Phi_p(X_\star,A_i)
\le r_p(X_\star)\sum_iw_ir_p(A_i)\), and divide by \(r_p(X_\star)>0\).
\end{proof}

\subsection{A multipoint support inequality and the variance inequality}

To control \(\Psi_*\) for all the data points at once, we extend
Proposition~\ref{prop:support} to a weighted sum of affinities.

\begin{prop}[Multipoint support inequality]\label{prop:support-multi}
Let \(1\le p\le2\), let \(A,A_1,\dots,A_m\in\sphere\), and let
\(a_1,\dots,a_m\ge0\).  Then
\begin{equation}\label{eq:support-multi}
  \sum_{i=1}^ma_i\Phi_p(A,A_i)
  \le
  \Bigl(\sum_{i=1}^ma_i^2
    +\sum_{i\ne j}a_ia_j\Phi_p(A_i,A_j)\Bigr)^{1/2}.
\end{equation}
\end{prop}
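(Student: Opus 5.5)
The plan is to copy the proof of Proposition~\ref{prop:support} with \(m\) summands in place of two. First reduce to the generic case: all \(a_i>0\) and all \(A_i\succ0\). Indices with \(a_i=0\) can simply be deleted from both sides. Singular \(A_i\) are handled at the end by replacing them with positive definite normalized approximations, using continuity of \(\Phi_p\). In the generic case, maximize \(F(X)=\sum_ia_i\Phi_p(X,A_i)\) over \(\sphere\) and let \(A_*\) be a maximizer. It suffices to prove
\[
\lambda:=F(A_*)\le\Bigl(\sum_ia_i^2+\sum_{i\ne j}a_ia_j\Phi_p(A_i,A_j)\Bigr)^{1/2},
\]
since evaluating \(F\) at the given \(A\) is then bounded by \(\lambda\).

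\emph{Lagrange condition.} Lemma~\ref{lem:positive-maximizer} gives \(A_*\succ0\). Put \(Y_i:=A_*^{1/2}A_iA_*^{1/2}\). Lemma~\ref{lem:differential} gives the stationarity equation
\[
\sum_ia_iY_i^t=\lambda A_*^p,
\]
and taking traces identifies \(\lambda=F(A_*)\). Set \(P_i:=a_iY_i^t\) and \(T:=\sum_iP_i=\lambda A_*^p\). Since \(t/p=1/2\), we have \(P_i^{1/p}=a_i^{1/p}Y_i^{1/2}\) and \(T^{-1/p}=\lambda^{-1/p}A_*^{-1}\).

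\emph{Norm evaluations.} The computations behind \eqref{eq:cross-exact}--\eqref{eq:self-Q-exact} give, for all \(i,j\) (including \(i=j\)),
\[
\pnorm{P_j^{1/p}T^{-1/p}P_i^{1/p}}{p}^p=\frac{a_ia_j}{\lambda}\,\Phi_p(A_i,A_j).
\]
Here \(\Phi_p(A_i,A_i)=\trace A_i^p=1\). The justification is the spectral argument already used: \(X^*X\) for \(X=Y_j^{1/2}A_*^{-1}Y_i^{1/2}\) is similar to a conjugate of \(A_iA_j\). Now apply Theorem~\ref{thm:partition} to the partition \(T=\sum_jP_j\) with free argument \(X=P_i^{1/p}\). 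This yields
\[
\frac1\lambda\sum_ja_ia_j\Phi_p(A_i,A_j)\ \ge\ \pnorm{P_i^{1/p}}{p}^p=\trace P_i=a_i\Phi_p(A_*,A_i).
\]

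\emph{Summation.} Summing over \(i\) and multiplying by \(\lambda\) gives
\[
\sum_{i,j}a_ia_j\Phi_p(A_i,A_j)\ \ge\ \lambda\sum_ia_i\Phi_p(A_*,A_i)=\lambda^2,
\]
which is the claim after taking square roots.

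The step requiring the most care is the approximation. Its first part is the reduction to positive definite \(A_i\): we need the normalized approximations \(\widehat{A_i+\varepsilon I}\) to converge to \(A_i\), and both sides of \eqref{eq:support-multi} to be continuous in the \(A_i\). Its second part is checking that Lemma~\ref{lem:positive-maximizer} applies, which it does because all \(a_i>0\) and \(A_i\succ0\). Everything else is the two-point proof verbatim.
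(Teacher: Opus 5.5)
Your proposal is correct and follows the paper's own proof essentially step for step. You maximize \(F\) over \(\sphere\), obtain \(A_*\succ0\) from Lemma~\ref{lem:positive-maximizer}, use the Lagrange condition \(\sum_ia_iY_i^t=\lambda A_*^p\), evaluate \(\pnorm{P_j^{1/p}T^{-1/p}P_i^{1/p}}{p}^p\), apply Theorem~\ref{thm:partition} with \(X=P_i^{1/p}\), sum over \(i\), and approximate singular \(A_i\); your uniform treatment of the diagonal terms via \(\Phi_p(A_i,A_i)=\trace A_i^p=1\) differs only cosmetically from the paper's separate case.
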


\begin{proof}
Assume first that every \(a_i>0\) and every \(A_i\succ0\).  Let \(A_*\)
maximize \(F(X)=\sum_ia_i\Phi_p(X,A_i)\) over \(\sphere\); by
Lemma~\ref{lem:positive-maximizer}, \(A_*\succ0\).  Put
\(Y_i:=A_*^{1/2}A_iA_*^{1/2}\).  By Lemma~\ref{lem:differential} the Lagrange
condition is \(\sum_{i=1}^ma_iY_i^t=\lambda A_*^p\),
and taking traces with \(\trace A_*^p=1\) identifies \(\lambda=F(A_*)\).  Set
\(P_i:=a_iY_i^t\) and \(T:=\sum_iP_i=\lambda A_*^p\).
Since \(t/p=1/2\), we have \(P_i^{1/p}=a_i^{1/p}Y_i^{1/2}\) and
\(T^{-1/p}=\lambda^{-1/p}A_*^{-1}\), so exactly as in
\eqref{eq:cross-exact}--\eqref{eq:self-Q-exact},
\begin{equation}\label{eq:multi-evaluations}
  \pnorm{P_j^{1/p}T^{-1/p}P_i^{1/p}}{p}^p
  =\begin{cases}
    \dfrac{a_ia_j}{\lambda}\Phi_p(A_i,A_j), & i\ne j,\\[2mm]
    \dfrac{a_i^2}{\lambda}, & i=j,
  \end{cases}
\end{equation}
the diagonal case using \(\trace A_i^p=1\).  Applying
Theorem~\ref{thm:partition} to the decomposition \(T=\sum_jP_j\) with
\(X=P_i^{1/p}\) gives
\[
  \sum_{j=1}^m\pnorm{P_j^{1/p}T^{-1/p}P_i^{1/p}}{p}^p
  \ \ge\ \trace P_i=a_i\Phi_p(A_*,A_i).
\]
Summing over \(i\) and inserting \eqref{eq:multi-evaluations},
\[
  \frac1\lambda\Bigl(\sum_ia_i^2+\sum_{i\ne j}a_ia_j\Phi_p(A_i,A_j)\Bigr)
  \ \ge\ \sum_ia_i\Phi_p(A_*,A_i)=\lambda .
\]
Since \(F(A)\le\lambda\), this proves \eqref{eq:support-multi}.
Approximate singular \(A_i\) and omit terms with \(a_i=0\).
\end{proof}

\begin{theorem}[Variance inequality]\label{thm:variance}
Let \(1\le p\le2\), \(A_1,\dots,A_m\in\pp_n\) and \(w\in\simplex\).  Then
\begin{equation}\label{eq:variance}
  \min_{X\in\pp_n}\ \sum_{i=1}^mw_iD_p(X,A_i)^2
  \ \ge\
  \sum_{1\le i<j\le m}w_iw_jD_p(A_i,A_j)^2 .
\end{equation}
Equality holds when \(p=2\), and on any commuting family.
\end{theorem}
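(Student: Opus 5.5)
We plan to use Lemma~\ref{lem:radial-reduction}, which reduces the left side of \eqref{eq:variance} to \(\sum_iw_i\trace A_i^p-\Psi_*^2\), and then to bound \(\Psi_*\) from above with the multipoint support inequality, Proposition~\ref{prop:support-multi}. Write \(r_i:=r_p(A_i)\), and discard indices with \(A_i=0\) or \(w_i=0\); a zero matrix contributes \(D_p(A_i,A_j)^2=r_j^2\) and is handled by the same algebra with \(c_{ij}\) set arbitrarily. For the rest, normalize \(\what A_i\in\sphere\) as in \eqref{eq:normalization}. For every \(\what X\in\sphere\), homogeneity gives
\[
  \Psi_w(\what X)=\sum_iw_i\Phi_p(\what X,A_i)=\sum_iw_ir_i\Phi_p(\what X,\what A_i).
\]

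We apply Proposition~\ref{prop:support-multi} with \(a_i:=w_ir_i\ge0\) and \(c_{ij}:=\Phi_p(\what A_i,\what A_j)\). This yields
\[
  \Psi_*^2\le\sum_iw_i^2r_i^2+\sum_{i\ne j}w_iw_jr_ir_jc_{ij}
  =\sum_iw_i^2r_i^2+\sum_{i\ne j}w_iw_j\Phi_p(A_i,A_j).
\]
Hence the minimum is at least
\[
  \sum_iw_ir_i^2-\sum_iw_i^2r_i^2-\sum_{i\ne j}w_iw_j\Phi_p(A_i,A_j).
\]
Since \(\sum_jw_j=1\), we have \(w_i-w_i^2=w_i\sum_{j\ne i}w_j\), so the first two terms combine to \(\sum_{i\ne j}w_iw_jr_i^2\). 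Symmetrizing over ordered pairs, this equals \(\sum_{i<j}w_iw_j(r_i^2+r_j^2)\). The cross term is \(2\sum_{i<j}w_iw_j\Phi_p(A_i,A_j)\). Together they give exactly \(\sum_{i<j}w_iw_jD_p(A_i,A_j)^2\) by \eqref{eq:distance-definition}. This proves \eqref{eq:variance}.

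For the equality cases at \(p=2\) and for commuting families, we use Proposition~\ref{prop:commuting}. Set \(v_i:=A_i^{p/2}\) in the Hilbert--Schmidt space, where \(D_p(A_i,A_j)=\pnorm{v_i-v_j}{2}\). Take \(X_\star=(\sum w_iv_i)^{2/p}\); it commutes with every \(A_i\), so \(D_p(X_\star,A_i)=\pnorm{\bar v-v_i}{2}\) with \(\bar v=\sum_iw_iv_i\). The Euclidean identity \(\sum_iw_i\pnorm{\bar v-v_i}{2}^2=\sum_{i<j}w_iw_j\pnorm{v_i-v_j}{2}^2\) then shows that this candidate attains the lower bound, so equality holds. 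At \(p=2\) the same identity applies directly to the arithmetic mean of Corollary~\ref{cor:p2}, for arbitrary data.

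One caveat for the commuting case: if \(\sum_iw_iA_i^{p/2}\) is singular, we restrict to its support, or argue by continuity. The main obstacle is the careful bookkeeping in the reduction. We must check that Proposition~\ref{prop:support-multi} is applied with the \(\what X\)-independent weights \(a_i=w_ir_i\), so that the bound holds at the maximizer defining \(\Psi_*\), and that the support-inequality bound converts exactly into the pairwise distances. The rest is algebra.
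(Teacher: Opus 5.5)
Your proposal is correct and follows the paper's proof essentially step for step: the radial reduction of Lemma~\ref{lem:radial-reduction}, Proposition~\ref{prop:support-multi} with \(a_i=w_ir_i\), the same regrouping into \(\sum_{i<j}w_iw_jD_p(A_i,A_j)^2\), and, for the equality cases, the Euclidean variance identity after the commuting isometry \eqref{eq:commuting-isometry}. Two small remarks: indices with \(A_i=0\) but \(w_i>0\) should be kept with \(a_i=0\) and an arbitrary \(\what A_i\in\sphere\), as the paper does and as your parenthetical suggests, rather than discarded, since discarding them breaks \(\sum_iw_i=1\); and your singularity caveat is unnecessary, because you only evaluate \(\Fc_w\) at \(X_\star\in\pp_n\) and \eqref{eq:commuting-isometry} holds for all commuting semidefinite pairs.
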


\begin{proof}
Set \(r_i:=r_p(A_i)\).  For \(r_i>0\), let \(\what A_i\) be the normalization
\eqref{eq:normalization}; for \(r_i=0\), choose any \(\what A_i\in\sphere\).
Put \(z_{ij}:=\Phi_p(\what A_i,\what A_j)\).  By
homogeneity \eqref{eq:normalization}, for \(\what X\in\sphere\),
\(\Psi_w(\what X)=\sum_iw_ir_i\Phi_p(\what X,\what A_i)\),
so Proposition~\ref{prop:support-multi} applied with \(a_i=w_ir_i\) bounds the
maximum \(\Psi_*\):
\[
  \Psi_*^2\ \le\ \sum_iw_i^2r_i^2+\sum_{i\ne j}w_iw_jr_ir_jz_{ij}.
\]
Insert this into \eqref{eq:min-frechet} and use \(\trace A_i^p=r_i^2\):
\[
  \min_X\Fc_w(X)
  \ \ge\ \sum_iw_ir_i^2-\sum_iw_i^2r_i^2-\sum_{i\ne j}w_iw_jr_ir_jz_{ij}
  =\sum_iw_i(1-w_i)r_i^2-\sum_{i\ne j}w_iw_jr_ir_jz_{ij}.
\]
Since \(1-w_i=\sum_{j\ne i}w_j\), the first sum equals
\(\sum_{i\ne j}w_iw_jr_i^2=\sum_{i<j}w_iw_j(r_i^2+r_j^2)\), while the second
equals \(2\sum_{i<j}w_iw_jr_ir_jz_{ij}\).  Hence
\[
  \min_X\Fc_w(X)
  \ \ge\ \sum_{i<j}w_iw_j\bigl(r_i^2+r_j^2-2r_ir_jz_{ij}\bigr)
  =\sum_{i<j}w_iw_jD_p(A_i,A_j)^2
\]
by \eqref{eq:cone-law-cosines}.  At \(p=2\) the distance is Euclidean and
\eqref{eq:variance} is the classical identity
\(\sum_iw_i\pnorm{X-A_i}{2}^2=\pnorm{X-\sum_jw_jA_j}{2}^2
+\sum_{i<j}w_iw_j\pnorm{A_i-A_j}{2}^2\) evaluated at the mean; on a commuting
family the same identity applies after the isometry
\eqref{eq:commuting-isometry}.
\end{proof}

\begin{rmk}[Two points and geodesicity]\label{rem:midpoints}
For \(m=2\) inequality \eqref{eq:variance} reads
\begin{equation}\label{eq:two-point-variance}
  \min_{X\in\pp_n}\bigl(w_1D_p(X,A_1)^2+w_2D_p(X,A_2)^2\bigr)
  \ \ge\ w_1w_2D_p(A_1,A_2)^2 .
\end{equation}
The reverse inequality holds in \emph{any} geodesic metric space: the point of
a geodesic from \(A_1\) to \(A_2\) at parameter \(w_2\) attains the value
\(w_1w_2D_p(A_1,A_2)^2\).  Strict inequality therefore rules out a geodesic
joining that pair; for \(w_1=w_2=1/2\), it also rules out a midpoint.

Equality holds at both endpoints: \(D_2\) is Euclidean, and the
Bures--Wasserstein space at \(p=1\) is geodesic
\citep{takatsu2011wasserstein,bhatia2019bures}.  For \(1<p<2\),
\eqref{eq:two-point-variance} with \(w_1=w_2=1/2\) is strict for suitable
pairs of rank-one projections in dimension two (we omit the proof).
Consequently, \(D_p\) is not a length metric on \(\pp_n\) for any \(n\ge2\),
even on the positive definite cone; in particular, \((\pp_n,D_p)\) is not
geodesic.  Commuting pairs do have midpoints, by \eqref{eq:commuting-isometry}.
\end{rmk}

\subsection{An a posteriori optimality certificate}\label{sec:computation}

The concavity of \(\Psi_w\) gives a lower bound on the minimum of \(\Fc_w\).
Once \(\nabla\Psi_w\) has been computed, the bound requires one Schatten norm.

\begin{prop}[Optimality certificate]\label{prop:certificate}
Let \(1\le p\le2\), \(w\in\simplex\), and \(X_0\succ0\).  Put
\begin{equation}\label{eq:certificate-gradient}
  G_0:=\tfrac p2\,X_0^{-1/2}
  \Bigl(\sum_iw_i\bigl(X_0^{1/2}A_iX_0^{1/2}\bigr)^{p/2}\Bigr)X_0^{-1/2}
  \ \succeq0 .
\end{equation}
Then, with \(\Psi_*\) as in Lemma~\ref{lem:radial-reduction},
\begin{equation}\label{eq:certificate}
  \Psi_*\ \le\ \Bigl(1-\tfrac p2\Bigr)\Psi_w(X_0)+\pnorm{G_0}{p'},
\end{equation}
and consequently \(\min_X\Fc_w(X)\) is at least
\(\sum_iw_i\trace A_i^p\) minus the square of the right side of
\eqref{eq:certificate}.  The bound \eqref{eq:certificate} holds with equality
when \(X_0\) is a maximizer of \(\Psi_w\) on \(\sphere\).
\end{prop}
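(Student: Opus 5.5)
The plan is to use concavity of \(\Psi_w\) on \(\pp_n\) (Proposition~\ref{prop:convexity}(i)) together with its positive homogeneity of degree \(t=p/2\), and then bound the linear term on \(\sphere\) by H\"older duality.

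\textbf{Step 1: the gradient.} By Lemma~\ref{lem:differential}, summed over \(i\) with weights \(w_i\), the gradient of \(\Psi_w\) at \(X_0\succ0\) is exactly \(G_0\) in \eqref{eq:certificate-gradient}. It is positive semidefinite, being a congruence of a sum of positive matrices.

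\textbf{Step 2: the tangent-plane bound.} Concavity of \(\Psi_w\) on the convex cone \(\pp_n\), differentiable at the interior point \(X_0\), gives for every \(X\in\pp_n\)
\[
  \Psi_w(X)\le\Psi_w(X_0)+\trace[G_0(X-X_0)].
\]
Euler's identity for degree-\(t\) homogeneity gives \(\trace[G_0X_0]=t\,\Psi_w(X_0)\); alternatively, this follows directly by taking traces in \eqref{eq:certificate-gradient}. Hence
\[
  \Psi_w(X)\le(1-t)\Psi_w(X_0)+\trace[G_0X].
\]
For \(X\in\sphere\), the H\"older inequality gives \(\trace[G_0X]\le\pnorm{G_0}{p'}\pnorm{X}{p}=\pnorm{G_0}{p'}\), since \(\trace X^p=1\). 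Maximizing over \(\sphere\) yields \eqref{eq:certificate}. The consequence for \(\min\Fc_w\) is then immediate from \eqref{eq:min-frechet}, because the right side of \eqref{eq:certificate} is nonnegative and squaring preserves the inequality \(0\le\Psi_*\le\text{RHS}\).

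\textbf{Step 3: equality.} Let \(X_0\in\sphere\) be a maximizer, hence positive definite in the situation of Lemma~\ref{lem:positive-maximizer}. The Lagrange condition reads \(G_0=\mu\,pX_0^{p-1}\). Pairing with \(X_0\) gives \(t\Psi_*=\mu p\), so \(G_0=t\Psi_*X_0^{p-1}\). Since \((p-1)p'=p\), we get
\[
  \pnorm{G_0}{p'}=t\Psi_*(\trace X_0^p)^{1/p'}=t\Psi_*.
\]
The right side of \eqref{eq:certificate} is therefore \((1-t)\Psi_*+t\Psi_*=\Psi_*\).

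\textbf{Main obstacle.} The only delicate point is justifying differentiability and the tangent inequality when some \(A_i\) are singular. Lemma~\ref{lem:differential} already covers semidefinite \(D\) at \(X\succ0\), so the tangent inequality holds there. The equality case should be stated under the hypothesis that the maximizer is positive definite, since the Lagrange condition requires an interior point.
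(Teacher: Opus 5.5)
Your proposal is correct and follows the paper's proof. Both use the tangent-plane bound from concavity, the identity $\ip{G_0}{X_0}=\tfrac p2\Psi_w(X_0)$, H\"older, and the Lagrange condition $G_0=\tfrac p2\Psi_*X_0^{p-1}$ for the equality case. The paper maximizes over the convex set $\Kc=\{X\in\pp_n:\trace X^p\le1\}$ rather than directly over $\sphere$, which changes nothing, since the tangent inequality holds on all of $\pp_n$.

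Two small points. At $p=1$ the identity $(p-1)p'=p$ is meaningless, so argue there via $X_0^0=I$ and $\pnorm{I}{\infty}=1$, as the paper does. Your concern about positive definiteness in the equality case is already settled by the standing hypothesis $X_0\succ0$.
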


\begin{proof}
Let \(\Kc:=\{X\in\pp_n:\trace X^p\le1\}\), a convex set by
Proposition~\ref{prop:convexity}(ii).  By homogeneity of \(\Psi_w\),
\(\max_{\Kc}\Psi_w=\max_{\sphere}\Psi_w=\Psi_*\).  Since \(\Psi_w\) is concave
(Proposition~\ref{prop:convexity}(i)) and differentiable at \(X_0\succ0\) with
\(\nabla\Psi_w(X_0)=G_0\) (Lemma~\ref{lem:differential}),
\[
  \Psi_*=\max_{X\in\Kc}\Psi_w(X)
  \le\Psi_w(X_0)-\ip{G_0}{X_0}+\max_{X\in\Kc}\ip{G_0}{X}.
\]
The inner product term simplifies: writing \(S_0\) for the bracket in
\eqref{eq:certificate-gradient},
\[
  \ip{G_0}{X_0}=\tfrac p2\trace(X_0^{-1/2}S_0X_0^{-1/2}X_0)=\tfrac p2\trace
  S_0=\tfrac p2\Psi_w(X_0).
\]
For the last term, H\"older gives
\(\ip{G_0}{X}\le\pnorm{G_0}{p'}\pnorm Xp\le\pnorm{G_0}{p'}\)
on \(\Kc\), including \(p=1\), when \(p'=\infty\).
Substituting these bounds into the concavity inequality proves
\eqref{eq:certificate}; combining with
\eqref{eq:min-frechet} gives the stated lower bound on \(\min\Fc_w\).

For the equality case, let \(X_0\in\sphere\) maximize \(\Psi_w\).  Since
\(X_0\succ0\), Lemma~\ref{lem:differential} gives the Lagrange condition
\(\sum_iw_i(X_0^{1/2}A_iX_0^{1/2})^{p/2}=\Psi_*X_0^p\), so
\(G_0=\tfrac p2\Psi_*X_0^{p-1}\).  The normalization
\(\trace X_0^p=1\) gives \(\pnorm{X_0^{p-1}}{p'}=1\), also at \(p=1\),
where \(X_0^0=I\) and \(p'=\infty\).  Hence
\(\pnorm{G_0}{p'}=\tfrac p2\Psi_*\).
Since \(\Psi_w(X_0)=\Psi_*\), the right side of \eqref{eq:certificate} equals
\((1-\tfrac p2)\Psi_*+\tfrac p2\Psi_*=\Psi_*\).
\end{proof}

\subsection{Three-point consequences}

The block positivity theorem also controls quadratic forms in the squared
distances of a triple.

\begin{corr}[Three-point negative type]\label{cor:p-bures-three-point-cnd}
Let \(1\le p\le2\).  For every \(A_1,A_2,A_3\in\pp_n\), the \(3\times3\)
affinity matrix \(K:=[\Phi_p(A_i,A_j)]_{i,j=1}^3\) is positive semidefinite.
Consequently, the squared distance matrix
\(\Delta:=[D_p(A_i,A_j)^2]_{i,j=1}^3\) is conditionally negative semidefinite:
\(c^{\mathsf T}\Delta c\le0\) for every
\(c=(c_1,c_2,c_3)^{\mathsf T}\in\reals^3\) with \(c_1+c_2+c_3=0\).
\end{corr}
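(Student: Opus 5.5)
The plan is to realize the affinity matrix \(K\) as the block-norm matrix of Theorem~\ref{thm:lin-block-norm}, and then obtain conditional negativity by a direct expansion of the quadratic form.

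\textbf{Step 1: positivity of \(K\).} Given \(A_1,A_2,A_3\in\pp_n\), form the \(3\times3\) block matrix \(\mathbf H:=[H_{ij}]\) with \(H_{ij}:=A_i^{1/2}A_j^{1/2}\). This is the Gram matrix \(X^*X\) with \(X=[\,A_1^{1/2}\ \ A_2^{1/2}\ \ A_3^{1/2}\,]\), so \(\mathbf H\succeq0\). By \eqref{eq:affinity-definition},
\[
  \pnorm{H_{ij}}{p}^p=\pnorm{A_i^{1/2}A_j^{1/2}}{p}^p=\Phi_p(A_i,A_j).
\]
Hence \(K=[\pnorm{H_{ij}}{p}^p]\), and Theorem~\ref{thm:lin-block-norm} gives \(K\succeq0\). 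This is exactly the construction noted in Remark~\ref{rem:lin-sharp-in-p}.

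Alternatively, one can reuse the proof of Theorem~\ref{thm:lin-block-norm}. There one writes \(K=DCD\), with \(D=\diag(r_p(A_i))\) and \(C\) the normalized cosine matrix. Positivity of \(C\) then follows from Theorem~\ref{thm:angular-triangle} and Lemma~\ref{lem:three-point-cosine-criterion}. Zero matrices contribute vanishing rows and columns and can be discarded.

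\textbf{Step 2: conditional negativity.} Write the squared distances as
\[
  \Delta_{ij}=K_{ii}+K_{jj}-2K_{ij},
\]
which holds since \(K_{ii}=\trace A_i^p\) by \eqref{eq:distance-definition}. Let \(d:=(K_{11},K_{22},K_{33})^{\mathsf T}\) and \(\mathbf 1:=(1,1,1)^{\mathsf T}\). Then
\[
  \Delta=d\mathbf 1^{\mathsf T}+\mathbf 1d^{\mathsf T}-2K.
\]
For \(c\in\reals^3\) with \(\mathbf 1^{\mathsf T}c=0\), the two rank-one terms are annihilated, so
\[
  c^{\mathsf T}\Delta c=-2c^{\mathsf T}Kc\le0
\]
by Step~1.

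The only substantive ingredient is Theorem~\ref{thm:lin-block-norm}, which is already available. The main point requiring care is checking that the block matrix built from the square roots is positive semidefinite and that its block norms reproduce the affinities. This is immediate from the Gram factorization. No approximation argument is needed, since Theorem~\ref{thm:lin-block-norm} allows semidefinite input.
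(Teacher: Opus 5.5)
Your proposal is correct and follows the paper's proof essentially verbatim. You apply Theorem~\ref{thm:lin-block-norm} to the block Gram matrix with \(H_{ij}=A_i^{1/2}A_j^{1/2}\) to get \(K\succeq0\), then note that the radial terms \(\trace A_i^p\) cancel on sum-zero vectors, which gives \(c^{\mathsf T}\Delta c=-2c^{\mathsf T}Kc\le0\).
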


\begin{proof}
Apply Theorem~\ref{thm:lin-block-norm} to the positive semidefinite block Gram
matrix with \(H_{ij}:=A_i^{1/2}A_j^{1/2}\) to obtain \(K\succeq0\).  This
is also the matrix case of \citet[Theorem~1.1]{vuong2026three}.  For
\(c_1+c_2+c_3=0\), the radial terms in the definition of \(D_p^2\) cancel, and
hence
\begin{align}
  c^{\mathsf T}\Delta c
  &=\sum_{i,j=1}^3c_ic_j
  \left(\trace A_i^p+\trace A_j^p-2\Phi_p(A_i,A_j)\right)\notag\\*
  &=-2c^{\mathsf T}Kc\le0.
  \label{eq:three-point-cnd-proof}
\end{align}
\end{proof}

\begin{corr}[A cosine-type support law]\label{cor:cosine-law}
For \(A,B,C\in\sphere\), write \(x=\Phi_p(A,B)\), \(y=\Phi_p(A,C)\),
and \(z=\Phi_p(B,C)\).  Then
\begin{equation}\label{eq:three-point-cosine}
  z\ge xy-\sqrt{(1-x^2)(1-y^2)}.
\end{equation}
\end{corr}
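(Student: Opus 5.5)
The inequality \eqref{eq:three-point-cosine} is a restatement of the angular triangle inequality. I would deduce it directly from Theorem~\ref{thm:angular-triangle}, passing to angles through the monotonicity of cosine. Write \(\alpha:=\vartheta_p(A,B)=\arccos x\), \(\beta:=\vartheta_p(A,C)=\arccos y\) and \(\gamma:=\vartheta_p(B,C)=\arccos z\). By \eqref{eq:holder-affinity}, \(x,y,z\in[0,1]\), so all three angles lie in \([0,\pi/2]\). Since \(\sin\alpha=\sqrt{1-x^2}\) and \(\sin\beta=\sqrt{1-y^2}\) are nonnegative, the right side of \eqref{eq:three-point-cosine} is exactly
\[
  \cos\alpha\cos\beta-\sin\alpha\sin\beta=\cos(\alpha+\beta).
\]
The claim is therefore \(\cos\gamma\ge\cos(\alpha+\beta)\).

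Next I invoke Theorem~\ref{thm:angular-triangle} in the ordering \(\vartheta_p(B,C)\le\vartheta_p(B,A)+\vartheta_p(A,C)\), which gives \(\gamma\le\alpha+\beta\). Symmetry of \(\Phi_p\) identifies \(\vartheta_p(B,A)\) with \(\alpha\). Both \(\gamma\) and \(\alpha+\beta\) lie in \([0,\pi]\), and cosine is decreasing there. Hence \(\cos\gamma\ge\cos(\alpha+\beta)\), which is \eqref{eq:three-point-cosine}.

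Alternatively, one could argue from positivity. The matrix \(C\) of Lemma~\ref{lem:three-point-cosine-criterion}, built from \(x,y,z\), is positive semidefinite by Theorem~\ref{thm:lin-block-norm} (via Corollary~\ref{cor:p-bures-three-point-cnd} with unit radii). The factorization \eqref{eq:lin-determinant-factorization} then places \(z\) between the two roots \(xy\pm\sqrt{(1-x^2)(1-y^2)}\).

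There is no real obstacle. The only points to check are that the square roots are the nonnegative sines, which holds because the angles lie in \([0,\pi/2]\), and that \(\alpha+\beta\le\pi\), which keeps cosine monotone on the relevant range.
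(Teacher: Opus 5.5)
Your proof is correct and is essentially the paper's: writing \(x=\cos\alpha\), \(y=\cos\beta\), \(z=\cos\gamma\) turns the right side into \(\cos(\alpha+\beta)\), and Theorem~\ref{thm:angular-triangle} together with the monotonicity of cosine on \([0,\pi]\) finishes the argument. Your alternative through positivity of the cosine matrix is also valid, but it is a detour, since that positivity was itself derived from the angular triangle inequality.
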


\begin{proof}
Inequality \eqref{eq:three-point-cosine} is equivalent to the angular triangle inequality after writing
\(x=\cos\alpha\), \(y=\cos\beta\), and \(z=\cos\gamma\).
\end{proof}

\section{Proofs of the convergence estimates, and the endpoint \texorpdfstring{\(p=1\)}{p=1}}\label{app:convergence}

\begin{proof}[\textbf{Proof of Lemma~\ref{lem:spectral-confinement}}]
By Lemma~\ref{lem:sufficient-decrease}, \(\Fc_w(X_k)\le F_0\) for every \(k\).
By \eqref{eq:holder-affinity}, \(D_p(X,A_i)^2\ge(r_p(X)-r_p(A_i))^2\), so
Jensen gives
\[
  \Fc_w(X)\ \ge\ \sum_iw_i\bigl(r_p(X)-r_p(A_i)\bigr)^2
  \ \ge\ \bigl(r_p(X)-\bar r\bigr)^2 .
\]
Hence \(r_p(X_k)\le\bar r+\sqrt{F_0}\), and since
\(\lambda_{\max}(X_k)^p\le\trace X_k^p=r_p(X_k)^2\), also
\(\lambda_{\max}(X_k)\le\lambda_+\).

For the lower bound, \(A_i\succeq\lambda_{\min}(A_i)I\) gives
\(X^{1/2}A_iX^{1/2}\succeq\lambda_{\min}(A_i)X\), and \(0<t\le1\), so the
L\"owner--Heinz inequality \citep{bhatia1997matrix} yields
\(\bigl(X^{1/2}A_iX^{1/2}\bigr)^t\succeq\lambda_{\min}(A_i)^tX^t\) and
therefore \(S(X)\succeq c_AX^t\).  Taking determinants gives
\(\det S(X)\ge c_A^n(\det X)^t\), whence
\begin{equation}\label{eq:det-T}
  \det\Tc(X)=(\det X)^{-1}\bigl(\det S(X)\bigr)^{1/t}\ \ge\ c_A^{\,n/t}.
\end{equation}
So \(\det X_k\ge c_A^{n/t}\) for \(k\ge1\), and combining with the upper bound,
\[
  \lambda_{\min}(X_k)\ \ge\ \frac{\det X_k}{\lambda_{\max}(X_k)^{n-1}}
  \ \ge\ \frac{c_A^{\,n/t}}{\lambda_+^{\,n-1}}=\lambda_- .
\]
A barycenter \(X_\star\) satisfies \(\Fc_w(X_\star)\le F_0\), hence
\(X_\star\preceq\lambda_+I\) by the first paragraph; it is positive definite by
Lemma~\ref{lem:positive-barycenter} and fixed by \(\Tc\) by
Theorem~\ref{thm:euler}, so \eqref{eq:det-T} gives
\(\det X_\star\ge c_A^{n/t}\) and hence \(X_\star\succeq\lambda_-I\).
\end{proof}

\begin{proof}[\textbf{Proof of Lemma~\ref{lem:residual-error-bound}}]
\emph{\textbf{(i) Strong convexity.}}  By \eqref{eq:frechet-expansion} and the symmetry
of \(\Phi_p\),
\begin{equation}\label{eq:F-decomp}
  \Fc_w(X)=\trace X^p-2\sum_{i=1}^mw_i\trace\bigl(A_i^{1/2}XA_i^{1/2}\bigr)^t
  +\text{const},
\end{equation}
and the second term is convex by Proposition~\ref{prop:convexity}(i).  For
\(g(X)=\trace f(X)\) with \(f\) smooth, the Daleckii--Krein formula
\citep{bhatia1997matrix} gives
\(D^2g(X)[H,H]=\sum_{j,k}(f')^{[1]}(\mu_j,\mu_k)|H_{jk}|^2\) in an eigenbasis
of \(X\), where \((f')^{[1]}\) is the first divided difference of \(f'\) and
\(\mu_j\) are the eigenvalues.  With \(f(x)=x^p\) each divided difference is a
value of \(f''(x)=p(p-1)x^{p-2}\) on \([\lambda_-,\lambda_+]\), and \(p\le2\)
makes \(x^{p-2}\) nonincreasing, so on \(\Bc\)
\begin{equation}\label{eq:hessian-trXp}
  D^2\trace X^p[H,H]\ \ge\ p(p-1)\lambda_+^{p-2}\frob{H}^2 .
\end{equation}
Thus \(\Fc_w\) is \(\mu_F\)-strongly convex on the convex set \(\Bc\), with
\(\mu_F:=p(p-1)\lambda_+^{p-2}\).  Minimizing the quadratic lower bound at
\(X\) and evaluating at \(X_\star\in\Bc\) gives
\begin{equation}\label{eq:grad-gap}
  \frob{\nabla\Fc_w(X)}^2\ \ge\ 2\mu_F\bigl(\Fc_w(X)-\Fc_w(X_\star)\bigr).
\end{equation}
The same computation applied to \(U\mapsto D_p(U,V)^2=\trace U^p
-2\Phi_p(U,V)+\trace V^p\), which vanishes together with its gradient at
\(U=V\), gives for \(U,V\in\Bc\)
\begin{equation}\label{eq:Dp-Frobenius-lower}
  D_p(U,V)^2\ \ge\ \frac{\mu_D}{2}\frob{U-V}^2,
  \qquad
  \mu_D:=p(p-1)\lambda_+^{p-2}.
\end{equation}

\emph{\textbf{(ii) The step controls the gradient.}}  Writing
\(X^{p-1}=X^{-1/2}X^pX^{-1/2}\) in \eqref{eq:frechet-gradient},
\begin{equation}\label{eq:gradient-residual-form}
  \nabla\Fc_w(X)=pX^{-1/2}\bigl(X^p-S(X)\bigr)X^{-1/2}.
\end{equation}
Put \(Q:=S(X)^{1/t}=X^{1/2}\Tc(X)X^{1/2}\).  Since \(p=2t\),
\(X^p-S(X)=(X^2)^t-Q^t\), and \(X,\Tc(X)\succeq\lambda_-I\) force
\(X^2\succeq\lambda_-^2I\) and
\(Q\succeq\lambda_-X\succeq\lambda_-^2I\).  On \([\lambda_-^2,\infty)\) the
divided differences of \(x\mapsto x^t\) are bounded by \(t\lambda_-^{2t-2}\),
so the Daleckii--Krein formula gives the Frobenius Lipschitz bound
\(\frob{U^t-V^t}\le t\lambda_-^{2t-2}\frob{U-V}\).  Applying it to \(U=X^2\),
\(V=Q\) and using \(X^2-Q=X^{1/2}\bigl(X-\Tc(X)\bigr)X^{1/2}\),
\[
  \frob{X^p-S(X)}\ \le\ t\lambda_+\lambda_-^{p-2}\frob{X-\Tc(X)} .
\]
Feeding this into \eqref{eq:gradient-residual-form} and using
\(\frob{X^{-1/2}ZX^{-1/2}}\le\lambda_-^{-1}\frob{Z}\),
\begin{equation}\label{eq:gradient-residual}
  \frob{\nabla\Fc_w(X)}\ \le\ C_R\frob{X-\Tc(X)},
  \qquad
  C_R:=pt\,\lambda_+\lambda_-^{p-3}=\frac{p^2}{2}\lambda_+\lambda_-^{p-3}.
\end{equation}

\emph{\textbf{(iii) Combination.}}  Chaining \eqref{eq:Dp-Frobenius-lower},
\eqref{eq:gradient-residual} and \eqref{eq:grad-gap},
\[
  D_p\bigl(X,\Tc(X)\bigr)^2
  \ \ge\ \frac{\mu_D}{2}\frob{X-\Tc(X)}^2
  \ \ge\ \frac{\mu_D}{2C_R^2}\frob{\nabla\Fc_w(X)}^2
  \ \ge\ \frac{\mu_D\mu_F}{C_R^2}\bigl(\Fc_w(X)-\Fc_w(X_\star)\bigr),
\]
and substituting the three constants,
\[
  \frac{\mu_D\mu_F}{C_R^2}
  =\frac{p^2(p-1)^2\lambda_+^{2p-4}}
        {\tfrac{p^4}4\lambda_+^2\lambda_-^{2p-6}}
  =\frac{4(p-1)^2}{p^2}\left(\frac{\lambda_-}{\lambda_+}\right)^{6-2p}
  =\rho_p .
\]
Finally \(4(p-1)^2/p^2\le1\) exactly because \(p\le2\), and
\(\lambda_-\le\lambda_+\) with \(6-2p>0\), so \(\rho_p\le1\).
\end{proof}

At \(p=1\) the leading term \(\trace X^p\) is linear, so
\eqref{eq:hessian-trXp} degenerates.  For positive definite data, the fidelity
terms supply the curvature needed for the error bound.

\begin{prop}[The Bures endpoint]\label{prop:bures-endpoint-rate}
Let \(p=1\), \(A_1,\dots,A_m\succ0\), and \(w\in\simplex\) with all \(w_i>0\).
Write \(\alpha_i:=\lambda_{\min}(A_i)\) and \(\beta_i:=\lambda_{\max}(A_i)\),
and set
\begin{equation}\label{eq:p1-constants}
  \mu_F^{(1)}:=\sum_{i=1}^mw_i\frac{\alpha_i^2}{2(\beta_i\lambda_+)^{3/2}},
  \qquad
  \mu_D^{(1)}:=\frac{\lambda_-^2}{2\lambda_+^3},
  \qquad
  C_R^{(1)}:=\frac{\lambda_+}{2\lambda_-^2}.
\end{equation}
Then Lemma~\ref{lem:residual-error-bound} and
Theorem~\ref{thm:linear-convergence} hold verbatim at \(p=1\) with
\begin{equation}\label{eq:rho-p1}
  \rho_1:=\min\left\{1,\
  \frac{\mu_F^{(1)}\mu_D^{(1)}}{\bigl(C_R^{(1)}\bigr)^2}\right\}>0
\end{equation}
in place of \(\rho_p\), and with \(\mu_F^{(1)}\) in place of
\(p(p-1)\lambda_+^{p-2}\) in \eqref{eq:linear-iterates}.
\end{prop}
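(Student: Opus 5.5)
The plan is to repeat the three-step scheme of Lemma~\ref{lem:residual-error-bound} at \(p=1\): (a) an error bound \(\frob{\nabla\Fc_w(X)}^2\ge2\mu_F^{(1)}(\Fc_w(X)-\Fc_w(X_\star))\); (b) a lower bound \(D_1(U,V)^2\ge\frac{\mu_D^{(1)}}2\frob{U-V}^2\) on \(\Bc\); (c) the residual bound \(\frob{\nabla\Fc_w(X)}\le C_R^{(1)}\frob{X-\Tc(X)}\). Chaining (b), (c), (a) gives \(D_1(X,\Tc(X))^2\ge\rho_1(\Fc_w(X)-\Fc_w(X_\star))\). The \(\min\{1,\cdot\}\) in \eqref{eq:rho-p1} keeps \(1-\rho_1\ge0\). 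The proof of Theorem~\ref{thm:linear-convergence} then goes through unchanged, because Lemmas~\ref{lem:sufficient-decrease} and~\ref{lem:spectral-confinement} hold at \(p=1\). The only change is that \(\mu_F^{(1)}\) replaces \(p(p-1)\lambda_+^{p-2}\) as the strong-convexity modulus in \eqref{eq:linear-iterates}.

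Step (c) is the easiest. At \(t=\tfrac12\), \eqref{eq:gradient-residual-form} gives \(\nabla\Fc_w(X)=X^{-1/2}(X-S(X))X^{-1/2}\) with \(S(X)=Q^{1/2}\) and \(Q=X^{1/2}\Tc(X)X^{1/2}\). I would repeat the Daleckii--Krein argument of part~(ii) with \(x\mapsto x^{1/2}\) on \([\lambda_-^2,\infty)\). It has Lipschitz constant \(\tfrac12\lambda_-^{-1}\), and together with \(\frob{X^{1/2}HX^{1/2}}\le\lambda_+\frob H\) and the outer factor \(\lambda_-^{-1}\) this yields \(C_R^{(1)}=\lambda_+/(2\lambda_-^2)\).

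Step (a) needs curvature from the concave terms, since \(\trace X\) is now linear. For each \(i\) I would bound the second derivative of \(X\mapsto-2\trace(A_i^{1/2}XA_i^{1/2})^{1/2}\) from below on \(\Bc\). Put \(M=A_i^{1/2}XA_i^{1/2}\), whose spectrum lies in \([\alpha_i\lambda_-,\beta_i\lambda_+]\). The divided differences of \(f'(x)=\tfrac12x^{-1/2}\) have magnitude at least \(\tfrac14(\beta_i\lambda_+)^{-3/2}\). Hence
\[
-D^2\trace M^{1/2}[K,K]\ge\tfrac14(\beta_i\lambda_+)^{-3/2}\frob K^2,\qquad K=A_i^{1/2}HA_i^{1/2},
\]
and \(\frob K\ge\alpha_i\frob H\). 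After the factor \(2\) and the weights, this gives the modulus \(\mu_F^{(1)}\), and (a) follows from strong convexity exactly as in \eqref{eq:grad-gap}.

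Step (b) is the main obstacle. The function \(U\mapsto D_1(U,V)^2=\trace U+\trace V-2\trace(V^{1/2}UV^{1/2})^{1/2}\) vanishes with zero gradient at \(U=V\). Its Hessian along the segment from \(V\) to \(U\) is at least \(\tfrac12\lambda_{\min}(V)^2(\lambda_{\max}(V)\lambda_{\max}(U))^{-3/2}\frob H^2\) by the computation in (a) with \(A_i=V\). On \(\Bc\) this is at least \(\tfrac12\lambda_-^2\lambda_+^{-3}\), and it grows as the radii shrink, so a crude Taylor bound should give \(\mu_D^{(1)}=\lambda_-^2/(2\lambda_+^3)\). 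The constant bookkeeping is delicate here; if it does not match exactly, I would adjust by the harmless factor absorbed into the \(\min\) in \(\rho_1\).
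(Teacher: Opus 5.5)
Your proposal is correct and is essentially the paper's proof. The paper likewise changes only the curvature step. It obtains \(\mu_F^{(1)}\) from the divided-difference bound \(\frac{1}{2\sqrt x\sqrt y(\sqrt x+\sqrt y)}\ge\frac{1}{4(\beta_i\lambda_+)^{3/2}}\) together with \(\frob{A_i^{1/2}HA_i^{1/2}}\ge\alpha_i\frob H\), gets \(\mu_D^{(1)}\) from the same bound with \(V\) in place of \(A_i\), and reuses the residual bound (\(C_R^{(1)}=\lambda_+/(2\lambda_-^2)\)) and the chaining unchanged.

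Your hedge in step (b) is unnecessary, and the \(\min\{1,\cdot\}\) could not absorb a constant mismatch anyway. The segment from \(V\) to \(U\) stays in \(\Bc\), so the Hessian there is at least \(\mu_D^{(1)}\frob H^2\). The integral Taylor remainder then gives exactly \(D_1(U,V)^2\ge\frac{\mu_D^{(1)}}{2}\frob{U-V}^2\), which is \eqref{eq:Dp-Frobenius-lower}.
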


\begin{proof}
Only part (i) of the proof of Lemma~\ref{lem:residual-error-bound} changes.
For \(f(x)=\sqrt x\),
\[
  -\frac{f'(x)-f'(y)}{x-y}=\frac{1}{2\sqrt x\sqrt y(\sqrt x+\sqrt y)}
  \ \ge\ \frac{1}{4(\beta_i\lambda_+)^{3/2}},
  \qquad 0<x,y\le\beta_i\lambda_+ ,
\]
and the eigenvalues of \(A_i^{1/2}XA_i^{1/2}\) do not exceed
\(\beta_i\lambda_+\) for \(X\in\Bc\).  Hence, by Daleckii--Krein as above and
\(\frob{A_i^{1/2}HA_i^{1/2}}\ge\alpha_i\frob H\),
\[
  D^2\Bigl[-2\trace\bigl(A_i^{1/2}XA_i^{1/2}\bigr)^{1/2}\Bigr][H,H]
  \ \ge\ \frac{\alpha_i^2}{2(\beta_i\lambda_+)^{3/2}}\frob H^2 ,
\]
so \(\Fc_w\) is \(\mu_F^{(1)}\)-strongly convex on \(\Bc\).  Applying the same
bound with the fixed second argument \(V\in\Bc\) in place of \(A_i\), so that
\(\lambda_-\) and \(\lambda_+\) replace \(\alpha_i\) and \(\beta_i\), gives
\eqref{eq:Dp-Frobenius-lower} with \(\mu_D^{(1)}\).  Part (ii) is
unchanged and yields \(C_R^{(1)}=pt\lambda_+\lambda_-^{p-3}\) at \(p=1\),
\(t=\tfrac12\).  Part (iii) then gives \eqref{eq:error-bound} with constant
\(\mu_F^{(1)}\mu_D^{(1)}/(C_R^{(1)})^2\); replacing it by \(\rho_1\) only
weakens the bound, and guarantees \(1-\rho_1\ge0\).  The proof of
Theorem~\ref{thm:linear-convergence} then applies verbatim.
\end{proof}

\begin{rmk}[Uniqueness at \(p=1\)]\label{rem:p1-uniqueness}
Strong convexity of \(\Fc_w\) on \(\Bc\) and the fact that every barycenter
lies in \(\Bc\) (Lemma~\ref{lem:spectral-confinement}) together give uniqueness
of the barycenter at \(p=1\) whenever all \(A_i\succ0\).
This consequence of the rate analysis uses a stronger hypothesis than the
criterion recalled after Theorem~\ref{thm:existence}, which needs only one
nonsingular \(A_i\) with \(w_i>0\).
\end{rmk}

\begin{rmk}[Endpoints]\label{rem:rate-endpoints}
At \(p=2\) the map \(\Tc\) is constant and \(X_1=X_\star\), so
\eqref{eq:linear-rate} holds trivially, and
\eqref{eq:sufficient-decrease} becomes the exact bias--variance decomposition
\(\Fc_w(X)-\Fc_w(\bar A)=\pnorm{X-\bar A}{2}^2\) for
\(\bar A=\sum_iw_iA_i\).  At \(p=1\),
Lemma~\ref{lem:sufficient-decrease} recovers the sufficient decrease estimate
for the Gaussian Wasserstein barycenter iteration in
\citet[Proposition~3.3]{alvarez2016fixed}.
\end{rmk}

\section{A numerical comparison with first-order methods}\label{app:first-order}

Convexity of \(\Fc_w\) on the cone (Proposition~\ref{prop:convexity}) allows
a comparison of \(\Tc\) with gradient descent (GD) and an accelerated FISTA
variant with adaptive restart (AGD), both using backtracking.
Work is measured in \emph{passes}: one sweep over
the \(m\) data matrices requiring an eigendecomposition of each.  A step of
\(\Tc\), one gradient, and one evaluation of \(\Fc_w\) each cost one pass.
We count all objective evaluations required by GD and AGD, including those at
extrapolated points; evaluations used only to monitor \(\Tc\) are not charged.

Table~\ref{tab:solver-comparison} reports the medians and
Figure~\ref{fig:solver-comparison} a single trajectory.  For each instance,
the relative objective gap is
\((\Fc_w(X)-F_{\mathrm{ref}})/\max\{1,|F_{\mathrm{ref}}|\}\), where
\(F_{\mathrm{ref}}\) is the lowest final objective value obtained by the three
methods.  At the tested exponents below \(2\), the median pass counts are \(4\)--\(12\)
for \(\Tc\), \(41\)--\(193\) for GD, and \(45\)--\(178\) for AGD.
Acceleration does not consistently improve on GD.
These comparisons concern the tested implementations, without alternative
parametrizations for GD or AGD.  The convergence guarantee in
Theorem~\ref{thm:linear-convergence} does not explain the observed advantage
of \(\Tc\).

\begin{table}[htbp]
\centering
\caption{Median passes to a relative objective gap of \(10^{-13}\), among
the runs reaching that target out of \(15\) random instances with \(m=4\).
All three methods use the same random positive definite start on each
instance.  At \(p=2\), the map \(\Tc\) is exact in a single step.}
\label{tab:solver-comparison}
\small
\begin{tabular}{@{}c rrr rrr rrr@{}}
\toprule
 & \multicolumn{3}{c}{\(n=2\)}
 & \multicolumn{3}{c}{\(n=5\)}
 & \multicolumn{3}{c}{\(n=10\)}\\
\cmidrule(lr){2-4}\cmidrule(lr){5-7}\cmidrule(lr){8-10}
\(p\) & \(\Tc\) & GD & AGD & \(\Tc\) & GD & AGD & \(\Tc\) & GD & AGD\\
\midrule
  $1.00$ & 6 & 75 & 81 & 10 & 115 & 151 & 12 & 193 & 178 \\
  $1.25$ & 6 & 92 & 60 & 8 & 79 & 98 & 9 & 106 & 129 \\
  $1.50$ & 6 & 57 & 55 & 7 & 65 & 65 & 8 & 59 & 80 \\
  $1.75$ & 4 & 48 & 46 & 5 & 42 & 45 & 6 & 41 & 48 \\
  $2.00$ & 1 & 4 & 4 & 1 & 3 & 36 & 1 & 3 & 4 \\
\bottomrule
\end{tabular}
\end{table}

\begin{figure}[htbp]
\centering
\begin{tikzpicture}
\begin{semilogyaxis}[
    width=0.82\linewidth, height=6.2cm,
    xlabel={passes (sweeps over the \(m\) data matrices)},
    ylabel={relative objective gap},
    xmin=0, xmax=140, ymin=1e-16, ymax=1e3,
    ytick={1e-16,1e-13,1e-10,1e-7,1e-4,1e-1,1e2},
    grid=both, grid style={gray!18},
    tick label style={font=\small},
    label style={font=\small},
    legend style={font=\small, at={(0.98,0.98)}, anchor=north east,
                  cells={anchor=west}, draw=gray!50},
    no markers, thick,
]
\addplot[cdarkblue, solid] coordinates {(0,2.1286e+01) (1,1.7871e-03) (2,1.0734e-05) (3,1.1575e-07) (4,1.4422e-09) (5,1.8757e-11) (6,2.4889e-13) (7,1.2523e-14) (8,5.4787e-15) (9,1.1628e-14) (10,6.5968e-15) (11,7.8267e-15) (12,3.8015e-15) (13,1.4759e-14) (14,8.7212e-15) (15,6.0377e-15) (16,9.0566e-15)};
\addplot[cdarkred, dashed] coordinates {(1,2.1286e+01) (3,1.5191e+01) (5,6.7314e+00) (7,2.5355e+00) (9,7.5340e-01) (11,1.6480e-01) (13,2.3017e-02) (17,3.8451e-03) (20,1.2529e-03) (24,3.4970e-05) (26,2.0352e-05) (30,4.0542e-07) (32,8.1124e-08) (35,4.7537e-08) (38,4.6942e-08) (42,2.0613e-10) (44,7.8125e-11) (47,6.2349e-11) (51,7.6321e-13) (53,2.0372e-13) (56,1.0778e-13) (59,8.9671e-14) (63,7.6031e-15) (67,6.0377e-15) (69,3.6897e-15) (82,3.6897e-15) (92,6.7086e-16) (102,2.2362e-16) (107,1.0000e-16) (111,1.0000e-16) (114,1.0000e-16) (118,1.0000e-16)};
\addplot[cmygreen, dotted] coordinates {(1,2.1286e+01) (3,1.5191e+01) (6,1.0117e+01) (9,5.3177e+00) (12,1.8420e+00) (15,2.5496e-01) (17,6.2871e-02) (20,9.6298e-03) (24,1.3313e-03) (27,8.1855e-05) (30,3.9168e-05) (33,3.6162e-05) (37,2.3169e-05) (40,9.5078e-06) (43,1.9349e-06) (46,2.3119e-07) (51,5.7997e-08) (54,1.1737e-08) (57,8.3058e-10) (60,1.0634e-11) (65,7.9978e-13) (68,8.9895e-14) (73,2.5940e-14) (77,1.8225e-14) (89,1.7666e-14) (94,1.6548e-14) (117,1.3641e-14) (124,1.3641e-14) (127,1.3641e-14) (130,1.2635e-14) (133,1.2635e-14) (139,1.2635e-14)};
\legend{fixed point \(\Tc\), gradient descent, accelerated}
\end{semilogyaxis}
\end{tikzpicture}
\caption{One instance, \(n=5\), \(m=4\), \(p=1.5\), from a random positive
definite start.  The fixed point reaches a gap of \(10^{-13}\) in \(7\) passes;
gradient descent needs \(59\) and the accelerated variant \(68\).  Each trace
ends at that method's stopping test.  Values below \(10^{-16}\) are floored;
the tails lie at double-precision accuracy.}
\label{fig:solver-comparison}
\end{figure}

\end{document}